\documentclass[12pt]{amsart}

\usepackage{amsmath,amssymb,amsthm,url,a4wide, graphicx, float, mathrsfs, bbold}
\usepackage{xcolor}
\usepackage[all,color]{xy}

\DeclareMathOperator{\lat}{lat}
\newcommand{\FBL}{\mathrm{FBL}}
\newcommand{\one}{\mathbf{1}}

\DeclareMathOperator*{\bigfree}{\raisebox{-0.6ex}{\scalebox{2}{$\ast$}}}

\theoremstyle{plain}
\newtheorem{theorem}{Theorem}[section]
\newtheorem{lemma}[theorem]{Lemma}
\newtheorem{corollary}[theorem]{Corollary}
\newtheorem{proposition}[theorem]{Proposition}

\theoremstyle{definition}
\newtheorem{definition}[theorem]{Definition}
\newtheorem{remark}[theorem]{Remark}
\newtheorem{example}[theorem]{Example}

\title{Free Products of Banach Lattices}

\author[G. Mart\'inez-Fern\'andez]{Gonzalo Mart\'inez-Fern\'andez}
\address{Instituto de Ciencias Matem\'aticas\\
Consejo Superior de Investigaciones Cient\'ificas\\
C/ Nicol\'as Cabrera 13--15\\
28049 Madrid, Spain}
\email{gonzalo.martinez@icmat.es}

\author[P. Tradacete]{Pedro Tradacete}
\address{Instituto de Ciencias Matem\'aticas\\
Consejo Superior de Investigaciones Cient\'ificas\\
C/ Nicol\'as Cabrera 13--15\\
28049 Madrid, Spain}
\email{pedro.tradacete@icmat.es}

\date{}

\begin{document}

\begin{abstract}
We study free products, that is, coproducts, in the category of Banach lattices and contractive lattice homomorphisms. We give a concrete construction of the free product of an arbitrary family of Banach lattices as a quotient of a free Banach lattice, and prove its basic structural properties. We also establish stability results for sublattice embeddings and projective Banach lattices, and also analyze the behavior of quotient maps. For compact Hausdorff spaces $K_1$ and $K_2$ we identify $C(K_1)\ast C(K_2)$ lattice isomorphically with $C(K_1\ast K_2)$, where $K_1\ast K_2$ denotes the topological join, and we derive an explicit formula for the free product norm in this representation. We further discuss free factors of free Banach lattices, and exploit the existence of non-trivial homological spheres to show that a free Banach lattice can have free factors which are not isomorphic to free Banach lattices.
\end{abstract}

\subjclass[2020]{46B42, 46M15, 55P40} 

\keywords{Banach lattice; Free product; Topological join; Free Banach lattice}

\maketitle

\section{Introduction}

The purpose of this paper is to study free products in the category of Banach lattices and contractive lattice homomorphisms. Free products are among the most basic universal constructions in mathematics. They appear, in different contexts, as disjoint unions of sets, direct sums of vector spaces and modules, topological sums of spaces, free products of groups, free products of lattices, or $\ell_1$-sums of Banach spaces. In categorical language these are usually called \emph{coproducts}. Given a certain category $\mathcal{A}$ and $(A_i)_{i\in I}$ a family of objects of $\mathcal{A}$, a coproduct of this family is a new object $A$ together with morphisms $\iota_i:A_i \rightarrow A$, for $i\in I$, such that for every family of morphisms $\alpha_i:A_i\rightarrow B$, $i\in I$, there is a unique morphism $\alpha:A\rightarrow B$ such that $\alpha_i=\alpha\iota_i$ for every $i\in I$. Hence, this universal property can be considered as dual to that of \emph{products}. Recent work on coproducts in several different settings include for instance \cite{Carai2026Godel, Chatzinikolaou2023OperatorASystems, Chirvasitu2022Dearth, GluesingLuerssenJany2023QMatroids, HerfortHofmannRusso2024ProLie, IoanaSpaasVigdorovich2025TraceSpaces, Kavruk2014Nuclearity}.

Our starting point and motivation comes the theory of free Banach lattices. Free Banach lattices over sets were first introduced by de Pagter and Wickstead in \cite{projfree}, where the connection with projective Banach lattices was also established. Later, Avil\'es, Rodr\'iguez and the second author introduced the free Banach lattice generated by a Banach space in \cite{freebasico}. This construction extended the previous one and has been the object of intense research in the recent years (see for instance \cite{freebanlatlatproj, freebanlatlat, deHeviaTradacete2023ComplexFBL, GarciaSanchezLeungTaylorTradacete2025UpperP, JardonLaustsenTaylorTradaceteTroitsky2022Convexity, Oikhberg2024Geometry, freebanlat}). Free Banach lattices  provide a natural framework for constructing coproducts and other colimits in the category of Banach lattices.

Our first goal in this paper is to show that arbitrary free products of Banach lattices exist and admit a concrete representation as a certain quotient of a free Banach lattice. Once the construction is available, we establish the standard categorical properties expected from a coproduct: uniqueness up to lattice isometric isomorphism, commutativity, associativity, and minimality, in this case as norm-density of the sublattice generated by the canonical copies of the factors. We also describe the unit ball of the free product as a certain closed solid convex hull and obtain an explicit description for the free product of finitely many copies of $\mathbb{R}$.

We then focus on which properties of Banach lattices are stable under taking free products. Using the stability of isometric embeddings under push-outs in Banach lattices given in \cite{amalg}, we show that free products preserve sublattice embeddings. We also prove that projectivity is stable under free products and that the functor $E \mapsto \FBL[E]$ transforms free products of Banach spaces into free products of Banach lattices. In addition, we study how free products interact with quotients, and prove a $p$-convexity stability result by adapting the factorization method of \cite{factorpconvex}. We also show that the free Banach lattice generated by a lattice, introduced in \cite{freebanlatlat}, preserves lattice free products.

The main concrete representation result concerns Banach lattices of the form $C(K)$. If $K_1$ and $K_2$ are compact Hausdorff spaces, we show that $C(K_1)\ast C(K_2)$ is lattice isomorphic to $C(K_1\ast K_2)$, where $K_1\ast K_2$ denotes the topological join. This identifies a large and tractable class of free products within classical function spaces and yields an explicit formula for the norm in terms of the join parameter. For background on topological joins we refer to \cite{hatcher}.

As we mentioned, a purely categorical argument shows that coproducts of free objects produce free objects. A more subtle question is whether free factors of free objects are in turn free objects. Note that it is not in general true that a factor (direct summand) of a free $R$-module is free, although this holds if $R$ is sufficiently nice (a principal ideal domain). Of course, for vector spaces this holds trivially. Although not so easy to prove, the well known Nielsen-Schreier theorem \cite[Proposition 3.8, p.16]{combgrouptheo} states that every subgroup of a free group is free, so in particular, every factor of a free group is free. In the category of Banach spaces, free objects correspond to spaces of the form $\ell_1(A)$, and their only free factors (also called complemented subspaces) are isomorphic to $\ell_1(B)$, with $|B|\leq |A|$. This motivates the following question: is every free factor of $\FBL[\ell_1(A)]$ of the form $\FBL[\ell_1(B)]$ with $|B|\leq|A|$? More precisely, given a decomposition as a free product $\FBL[\ell_1]=X*Y$, is at least one of the factors isomorphic to $\FBL[\ell_1]$? Although the latter remains open, we give some partial results in that direction. In particular, we will show that free factors of a free Banach lattice need not be free Banach lattices. This will follow from a surprising connection between free products of $C(K)$'s and the Double Suspension Theorem due to James W. Cannon and Robert D. Edwards \cite{Doublesusptheo}.

The paper is organized as follows. Section \ref{sec:preliminaries} recalls the necessary background on free Banach lattices, push-outs, and topological joins. In Section \ref{sec:construction} we construct free products of families of Banach lattices and establish their basic categorical properties. Section \ref{sec:homomorphisms} studies the behavior of free products with respect to embeddings, surjections, and quotients, while Section~\ref{sec:stability} contains several stability results, including projectivity and compatibility with free Banach lattices. Section~\ref{sec:C(K)} is devoted to free products of $C(K)$-spaces, where we identify $C(K_1)\ast C(K_2)$ with a space of continuous functions on the join $K_1\ast K_2$ and discuss applications to AM-spaces, bases, and order-theoretic properties. In Section~\ref{sec:pconvex} we prove stability results related to $p$-convexity and upper $p$-estimates. In Section~\ref{sec:freefactors}, we consider free factors of free Banach lattices and use topological examples to produce non-free factors. Finally, Section~\ref{sec:freelattice} is devoted to the discussion of free products for free Banach lattices generated by lattices.

%

\section{Preliminaries}\label{sec:preliminaries}
\subsection{Free Banach lattices} Let $E$ be a Banach space. The free Banach lattice generated by $E$ is a Banach lattice $\FBL[E]$ together with an isometric linear map $\delta_E:E \to \FBL[E]$ such that every bounded linear operator $T:E \to Y$ into a Banach lattice $Y$ extends uniquely to a lattice homomorphism $\widehat{T}:\FBL[E] \to Y$ with $\widehat{T}\delta_E=T$ and $\|\widehat{T}\|=\|T\|$: 
\[
\xymatrix{\FBL[E] \ar[rd]^{\widehat T} \\ E \ar[u]^{\delta_E} \ar[r]^T & X}
\]

A concrete representation of $\FBL[E]$ is given in \cite{freebasico}: let $H[E]$ be the sublattice of positively homogeneous linear maps in $\mathbb{R}^{E^*}$ with the pointwise operations. The further sublattice of functions $f$ in $H[E]$ such that
\[
\|f\|=\sup\Big\{\sum_{i=1}^n|f(x_i^*)|:x_1^*,\ldots,x_n^*\in E^*,\text{ }\sup_{x\in B_E}\sum_{i=1}^n|x^*_i(x)|\leq 1,\text{ }n\in\mathbb{N}\Big\}<\infty,
\]
denoted by $H_1[E]$, is a Banach lattice when equipped with the lattice norm defined by the expression above. The closed sublattice of $H_1[E]$ generated by the evaluation functionals $\delta_x(x^*)=x^*(x)$, together with the linear isometry defined by $\delta_E(x):=\delta_x$ for every $x\in E$, can be shown to satisfy the universal property of the free Banach lattice $\FBL[E]$; see \cite[Theorem 2.4]{freebasico}.

We will need the following fact: $\FBL[E]$ has a strong order unit precisely when $\dim(E)<\infty$. In particular, $\FBL[\mathbb{R}^{n+1}]\simeq C(S^n)$ up to an equivalent lattice norm; see \cite[Proposition 9.1]{freebanlat}. Another remarkable property of $\FBL[\cdot]$ is its functoriality, that is, whenever we have a bounded linear map $T:E\to F$ of Banach spaces, it induces a unique lattice homomorphism \[
\overline T:\FBL[E]\to \FBL[F]
\]
making the following diagram commute
\[
\xymatrix{\FBL[E] \ar[r]^{\overline T} & \FBL[F] \\ E \ar[r]^T \ar[u]^{\delta_E} & F \ar[u]^{\delta_F}}
\]

The free Banach lattices above are examples of free objects in the category $\mathcal{BL}$ of Banach lattices with lattice homomorphisms. If we now restrict our attention to the subcategory of $p$-convex Banach lattices with $1<p\leq\infty$, we get the notion of the free $p$-convex Banach lattice generated by the Banach space $E$, first introduced in \cite{JardonLaustsenTaylorTradaceteTroitsky2022Convexity}: it is a $p$-convex Banach lattice with constant $1$, $\FBL^{(p)}[E]$, together with a linear isometric embedding $\delta_E:E\to\FBL^{(p)}[E]$ such that for every linear operator $T:E\rightarrow X$ to a $p$-convex Banach lattice $X$, we have a lattice homomorphism $\widehat T:\FBL^{(p)}[E]\to X$ with $\widehat T\delta_E=T$ and 
$\|\widehat T\|\leq M^{(p)}(X)\|T\|$,
where $M^{(p)}(X)$ is the $p$-convexity constant of $X$. Analogously, the notion of free Banach lattice with upper $p$-estimates $\FBL^{\uparrow p}[E]$ was also introduced in \cite{JardonLaustsenTaylorTradaceteTroitsky2022Convexity} and further developed in \cite{GarciaSanchezLeungTaylorTradacete2025UpperP}.

\subsection{Push-outs} Let $\mathcal{A}$ be a category, $A_0,A_1,A_2$ objects and morphisms $\alpha_i:A_0\rightarrow A_i$, $i=1,2$. A push-out diagram (or simply a push-out) is an object $PO=PO(\alpha_1,\alpha_2)$ together with morphisms $\beta_i:A_i\rightarrow PO$ such that $\beta_1\alpha_1=\beta_2\alpha_2$ and for every other pair of morphisms $\gamma_i:A_i\rightarrow B$ that make the diagram commutative, there is a unique morphism $\gamma:PO\rightarrow B$ making the following diagram commute.
$$\xymatrix{& & B \\
A_1 \ar[r]^{\beta_1} \ar@/^1.5pc/@[black][rru]^{\gamma_1} & PO \ar[ru]^\gamma \\ A_0 \ar[u]^{\alpha_1} \ar[r]^{\alpha_2} & A_2 \ar@/_1.5pc/@[black][ruu]^{\gamma_2} \ar[u]^{\beta_2}}$$

We work in the category $\mathcal{BL}$. In this context, we will call an \emph{isometric push-out} of $\alpha_1$ and $\alpha_2$ if we require the map $\gamma$ given above to satisfy $\|\gamma\|\leq \max\{\|\gamma_1\|,\|\gamma_2\|\}$. Existence of isometric push-outs in $\mathcal{BL}$ was established in \cite{amalg}. Moreover, \cite[Theorem~4.4]{amalg} stablishes the key property that if $\alpha_1$, $\alpha_2$ are isometric lattice embeddings, so are $\beta_1$ and $\beta_2$.


\subsection{Topological join} Let $\mathcal T_1$ and $\mathcal T_2$ be topological spaces. Their \emph{join} $\mathcal T_1\ast \mathcal T_2$ is defined as the quotient of $\mathcal T_1\times \mathcal T_2\times [0,1]$ obtained by identifying $(x,y_1,0)$ with $(x,y_2,0)$ for all $x\in \mathcal T_1$ and $y_1,y_2\in \mathcal T_2$, and identifying $(x_1,y,1)$ with $(x_2,y,1)$ for all $y\in \mathcal T_2$ and $x_1,x_2\in \mathcal T_1$. We denote the class of $(x,y,t)$ by $[x,y,t]$. When $K_1$ and $K_2$ are compact Hausdorff spaces, so is $K_1\ast K_2$. In this case, the join is associative up to homeomorphism (see \cite{hatcher}). 

The join of topological spaces is a generalization of well known constructions in topology: when $\mathcal T_2=\{p\}$ is a singleton, we recover the notion of cone of $\mathcal T_1$, $Cone(\mathcal T_1)=\mathcal T_1*\{p\}$. When $\mathcal T_2$ consists of two points (i.e., $\mathcal T_2=S^0$), the join is also called the suspension of $\mathcal T_1$, $S\mathcal T_1=\mathcal T_1*S^0$. One key result is that $S(S^n)=S^n*S^0=S^{n+1}$, and in general, $S^m*S^n=S^{m+n+1}$.

 We can naturally define the join of $n$ topological spaces as the quotient of $\mathcal T_1\times\ldots\times \mathcal T_n\times (B_1^n)_+$ with the proper identifications. The idea would be to regard each point as a formal convex combination of the form $t_1x_1+\ldots+t_nx_n$, with $\sum t_i=1$, $x_i\in \mathcal T_i$. More formally, we identify two pairs $(x_1,\ldots,x_i,\ldots,x_n,t)$ and $(x_1,\ldots,x_i',\ldots,x_n,t)$ whenever $t=(t_1,\ldots,t_n)$ and $t_i=0$ for that particular $i\in\{1,\ldots,n\}$. When $\mathcal T_1,\ldots,\mathcal T_n$ are compact Hausdorff spaces, their join $\mathcal T_1*\ldots*\mathcal T_n$ is always compact and Hausdorff, so $C(\mathcal T_1*\ldots*\mathcal T_n)$ will be on our list of $C(K)$ spaces. For the case when all spaces are compact Hausdorff, it is not hard to see that iteratively taking the join of $n$ spaces is the same, up to homeomorphism, as taking their join as a family of $n$ spaces. For instance, to show $(X*Y)*Z$ and $X*Y*Z$ are homeomorphic, it is enough to see that the map that sends $[[x,y,t],z,s]$ to $[x,y,z,((1-s)(1-t),(1-s)t,s)]$ is a continuous bijection. However, the join operation defined this way is not associative when we deal with arbitrary topological spaces. Anyway, we will focus on the join of only two compact Hausdorff spaces for the most part. For a reference on joins, the reader can see \cite{hatcher}.

\subsection{Notation} 
The symbol $\simeq$ is reserved for (lattice) isomorphisms between Banach spaces (lattices). For homeomorphisms between topological spaces, we use $\cong$. We shall use without further comment the standard fact that every lattice homomorphism $u:C(K)\to \mathbb{R}$ is of the form $u=c\,\delta_x$ for some $c\geq 0$ and some $x\in K$. 
For unexplained terminology on Banach lattices we refer the reader to the monographs \cite{burkinshaw,LT2,nieberg}.

\section{Construction and Basic Properties}\label{sec:construction}

\begin{definition}
Let $(X_i)_{i\in I}$ be a family of Banach lattices. A \emph{free product} of the family is a Banach lattice $X$ together with contractive lattice homomorphisms $\varphi_i:X_i\to X$ such that for every Banach lattice $Y$ and every family of lattice homomorphisms $T_i:X_i\to Y$ satisfying $\sup_{i\in I}\|T_i\|<\infty$, there exists a unique lattice homomorphism $T:X\to Y$ with
\[
T\varphi_i=T_i \qquad (i\in I)
\]
and
\[
\|T\|\leq \sup_{i\in I}\|T_i\|.
\]
\end{definition}

\begin{remark}
If $(X,(\varphi_i)_{i\in I})$ and $(Y,(\psi_i)_{i\in I})$ are free products of $(X_i)_{i\in I}$, then there exists a unique lattice isometric isomorphism $\Phi:X\to Y$ such that $\Phi\varphi_i=\psi_i$ for every $i\in I$.
\end{remark}

\begin{proof}
Applying the universal property twice, first with $T_i=\psi_i$ and then with $T_i=\varphi_i$, we obtain contractive lattice homomorphisms $\Phi:X\to Y$ and $\Psi:Y\to X$ with $\Phi\varphi_i=\psi_i$ and $\Psi\psi_i=\varphi_i$. Then $\Phi\Psi\psi_i=\psi_i$ for all $i\in I$, so uniqueness gives $\Phi\Psi=\mathrm{id}_Y$. Similarly, $\Psi\Phi=\mathrm{id}_X$.
\end{proof}

\begin{theorem}\label{thm:existence}
For every family $\bold{X}=(X_i)_{i\in I}$ of Banach lattices, the free product exists.
\end{theorem}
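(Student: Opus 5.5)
We construct the free product as a quotient of a free Banach lattice. Let $E=\left(\bigoplus_{i\in I} X_i\right)_{\ell_1}$, the $\ell_1$-sum of the underlying Banach spaces, and let $j_i:X_i\to E$ be the canonical isometric inclusions. Consider $\FBL[E]$ with $\delta_E:E\to\FBL[E]$. The elements $\delta_E j_i$ are only linear, so we must force them to become lattice homomorphisms. Let $J$ be the closed ideal of $\FBL[E]$ generated by
\[
\bigl\{\, |\delta_E j_i(x)| - \delta_E j_i(|x|) \;:\; i\in I,\ x\in X_i \,\bigr\}.
\]
Set $X=\FBL[E]/J$ with the quotient norm, which is a Banach lattice since $J$ is a closed ideal. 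Let $q$ be the quotient map and define $\varphi_i=q\,\delta_E\, j_i$. Each $\varphi_i$ is linear and contractive, because $\|q\|\le1$, $\delta_E$ is an isometry, and $j_i$ is an isometry. It also preserves moduli by construction. Hence each $\varphi_i$ is a contractive lattice homomorphism.

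For the universal property, let $T_i:X_i\to Y$ be lattice homomorphisms with $M=\sup_i\|T_i\|<\infty$. Define $S:E\to Y$ by $S((x_i)_i)=\sum_i T_i x_i$. The sum converges absolutely since $\sum\|T_ix_i\|\le M\sum\|x_i\|$, so $\|S\|\le M$, and the $\ell_1$-norm is exactly what makes this estimate work. The free property gives a lattice homomorphism $\widehat S:\FBL[E]\to Y$ with $\widehat S\delta_E=S$ and $\|\widehat S\|=\|S\|\le M$. Since $\widehat S$ is a lattice homomorphism and each $T_i$ is one,
\[
\widehat S\bigl(|\delta_E j_i x|-\delta_E j_i|x|\bigr)=|T_ix|-T_i|x|=0.
\]
So $\ker\widehat S$ is a closed ideal containing the generators, hence containing $J$. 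Therefore $\widehat S$ factors as $T q$ with $T:X\to Y$ a lattice homomorphism. It satisfies $\|T\|=\|\widehat S\|\le M$, because the quotient norm is the infimum over representatives. Finally $T\varphi_i=\widehat S\delta_E j_i=T_i$.

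Uniqueness is the step needing some care. The closed sublattice of $\FBL[E]$ generated by $\delta_E(E)$ is all of $\FBL[E]$, and $\delta_E(E)$ is the closed linear span of the $\delta_E j_i(X_i)$. Hence $X$ is the closed sublattice generated by $\bigcup_i\varphi_i(X_i)$. Two lattice homomorphisms that agree on this set agree on the sublattice it generates. They are bounded, since positive operators between Banach lattices are automatically bounded. So by continuity they agree on $X$.

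The main obstacle is verifying that the closed ideal generated by the generators lies in $\ker\widehat S$. This holds because kernels of lattice homomorphisms are ideals, and closed ones when the map is bounded, together with the norm bound on the induced map. Both are routine. Nontriviality of $X$, that is, that the $\varphi_i$ are isometric, is not required by the statement.
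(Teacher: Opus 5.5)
Your proposal is correct and follows essentially the same route as the paper. Both take the quotient of $\FBL$ over the $\ell_1$-sum by the closed ideal generated by $|\delta(\iota_i x)|-\delta(\iota_i|x|)$, factor the induced $\widehat S$ through the quotient, and get uniqueness from density of the sublattice generated by the canonical copies.
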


\begin{proof}
Let
\[
Z_\textbf{X}=\Big(\bigoplus_{i\in I} X_i\Big)_{\ell_1}
\] 
and let $\iota_i:X_i\to Z_\textbf{X}$ denote the canonical inclusions. Write $\delta_{Z_\textbf{X}}:Z_\textbf{X}\to \FBL[Z_\textbf{X}]$ for the canonical embedding. Let $J$ be the closed ideal of $\FBL[Z_\textbf{X}]$ generated by the elements
\[
\delta_{Z_\textbf{X}}(\iota_i|x|)-|\delta_{Z_\textbf{X}}(\iota_i x)|,
\qquad i\in I,\ x\in X_i.
\]
Set
\[
X=\FBL[Z_\textbf{X}]/J,
\]
let $q:\FBL[Z_\textbf{X}]\to X$ be the quotient map, and define
\[
\varphi_i=q\delta_{Z_\textbf{X}}\iota_i \qquad (i\in I).
\]
Clearly, $\varphi_i$ is contractive, and by definition of $J$, a lattice homomorphism.

Let now $Y$ be a Banach lattice and let $T_i:X_i\to Y$ be lattice homomorphisms with $M:=\sup_{i\in I}\|T_i\|<\infty$. Define
\[
S:Z_\textbf{X}\to Y,\qquad S((x_i)_{i\in I})=\sum_{i\in I}T_i x_i.
\]
Then $S$ is bounded and $\|S\|\leq M$. By the universal property of $\FBL[Z_\textbf{X}]$, there exists a lattice homomorphism $\widehat{S}:\FBL[Z_\textbf{X}]\to Y$ such that $\widehat{S}\delta_{Z_\textbf{X}}=S$ and $\|\widehat{S}\|=\|S\|$.

For every $i\in I$ and $x\in X_i$ we have
\[
\widehat{S}\big(\delta_{Z_\textbf{X}}(\iota_i|x|)-|\delta_{Z_\textbf{X}}(\iota_i x)|\big)
=T_i|x|-|T_i x|=0,
\]
so $J\subseteq \ker \widehat{S}$. Hence $\widehat{S}$ factors through a lattice homomorphism $T:X\to Y$ such that $Tq=\widehat{S}$. For each $i\in I$,
\[
T\varphi_i=Tq\delta_{Z_\textbf{X}}\iota_i=\widehat{S}\delta_{Z_\textbf{X}}\iota_i=S\iota_i=T_i.
\]

To prove uniqueness, suppose $R:X\to Y$ is another lattice homomorphism with $R\varphi_i=T_i$ for all $i\in I$. Then
\[
Rq\delta_{Z_\textbf{X}}\iota_i=T_i=Tq\delta_{Z_\textbf{X}}\iota_i
\qquad (i\in I),
\]
and therefore $Rq\delta_{Z_\textbf{X}}=Tq\delta_{Z_\textbf{X}}$ on $Z_\textbf{X}$. Since $q\delta_{Z_\textbf{X}}(Z_\textbf{X})$ generates a dense sublattice of $X$, we get $R=T$.

Finally,
\[
\|T\|\leq \|\widehat{S}\|=\|S\|\leq M.
\]
Conversely, since $T_i=T\varphi_i$ and each $\varphi_i$ is contractive, we have $\|T_i\|\leq \|T\|$ for every $i\in I$. Thus
\[
\|T\|=\sup_{i\in I}\|T_i\|.
\]
\end{proof}

In view of the previous results, we will use the symbol $\bigfree_{i\in I} X_i$ to denote \textit{the} free product of the family $(X_i)_{i\in I}$. Each $X_i$ is a \textit{free factor} of the free product. For the homomorphism $T$ extending all $T_i$ we will usually write $\bigfree_{i\in I} T_i$, too.

\begin{remark}\label{rem:isometric-copies}
For every $i\in I$, the canonical map $\varphi_i:X_i\to \bigfree_{j\in I}X_j$ is a lattice isometric embedding.
\end{remark}

\begin{proof}
Fix $i\in I$ and define $T_i=\mathrm{id}_{X_i}$ and $T_j=0$ for $j\neq i$. Let $T:\bigfree_{j\in I}X_j\to X_i$ be the corresponding free product homomorphism. Then $T\varphi_i=\mathrm{id}_{X_i}$, so for every $x\in X_i$,
\[
\|x\|=\|T\varphi_i(x)\|\leq \|\varphi_i(x)\|\leq \|x\|.
\]
\end{proof}

\begin{remark}\label{rem:bigfree-formula}
Every element of $\lat(\bigcup_{i\in I}\varphi_i(X_i))\subset \bigfree_{i\in I}X_i$ can be written as a lattice-linear expression
\[
\Phi(\varphi_{i_1}x_1,\ldots,\varphi_{i_n}x_n)
\]
for suitable $x_j\in X_{i_j}$. If $(T_i)_{i\in I}$ is a uniformly bounded family of lattice homomorphisms, then
\[
\bigfree_{i\in I}T_i\bigl(\Phi(\varphi_{i_1}x_1,\ldots,\varphi_{i_n}x_n)\bigr)
=
\Phi(T_{i_1}x_1,\ldots,T_{i_n}x_n).
\]
\end{remark}

The following result is rather straightforward but serves as warm up: it states free products enjoy commutativity and associativity. The proof is a general categorical argument relying solely on uniqueness.

\begin{proposition}
Let $(X_i)_{i\in I}$ be a family of Banach lattices.

\begin{enumerate}
\item If $\sigma:I\to I$ is a bijection and $Y_i=X_{\sigma(i)}$, then
\[
\bigfree_{i\in I} X_i \simeq \bigfree_{i\in I} Y_i
\]
lattice isometrically.
\item If $\mathcal{P}$ is a partition of $I$, then
\[
\bigfree_{J\in \mathcal{P}}\Big(\bigfree_{i\in J} X_i\Big)\simeq \bigfree_{i\in I} X_i
\]
lattice isometrically.
\end{enumerate}
\end{proposition}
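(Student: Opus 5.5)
Both parts follow the pattern of the uniqueness Remark: I will show that the left-hand side, with suitable canonical maps, is a free product of $(X_i)_{i\in I}$, and then invoke that Remark. The only extra care concerns the norm bounds, since the definition demands $\|T\|\le\sup_i\|T_i\|$ and not just boundedness.

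For (1), let $X=\bigfree_{i\in I}Y_i$ with maps $\psi_i:Y_i\to X$, and put $\varphi_j=\psi_{\sigma^{-1}(j)}:X_j=Y_{\sigma^{-1}(j)}\to X$. Given lattice homomorphisms $T_j:X_j\to Z$ with $\sup_j\|T_j\|<\infty$, set $S_i=T_{\sigma(i)}:Y_i\to Z$. The universal property of $X$ gives a unique $S$ with $S\psi_i=S_i$ and $\|S\|\le\sup_i\|S_i\|=\sup_j\|T_j\|$. Unwinding the definitions, this says exactly that $S\varphi_j=T_j$, and uniqueness carries over because the two conditions are equivalent. Hence $(X,(\varphi_j))$ is a free product of $(X_j)$, and the Remark yields the isometric lattice isomorphism.

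For (2), write $W_J=\bigfree_{i\in J}X_i$ with maps $\varphi^J_i:X_i\to W_J$, and $W=\bigfree_{J\in\mathcal P}W_J$ with maps $\psi_J:W_J\to W$. For $i\in J$ define $\varphi_i=\psi_J\varphi^J_i$; as a composition of contractive lattice homomorphisms it is again one. Now take lattice homomorphisms $T_i:X_i\to Z$ with $M=\sup_i\|T_i\|<\infty$. For each $J$ the universal property of $W_J$ gives $T^J:W_J\to Z$ with $T^J\varphi^J_i=T_i$ and $\|T^J\|\le\sup_{i\in J}\|T_i\|\le M$. Because this bound is uniform in $J$, the family $(T^J)$ is admissible for $W$, so we get $T:W\to Z$ with $T\psi_J=T^J$ and $\|T\|\le M$. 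In particular $T\varphi_i=T_i$ for every $i$.

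The step needing the most care is uniqueness in (2). Suppose $R:W\to Z$ is a lattice homomorphism with $R\varphi_i=T_i$ for all $i$. For a fixed $J$, the composition $R\psi_J$ is a lattice homomorphism on $W_J$ with $(R\psi_J)\varphi^J_i=T_i$ for $i\in J$. Since $R\psi_J$ is bounded, uniqueness in $W_J$ gives $R\psi_J=T^J$. Then uniqueness in $W$ gives $R=T$. This argument uses only that lattice homomorphisms between Banach lattices are automatically bounded, which holds because they are positive operators. With uniqueness settled, the Remark gives the isometric lattice isomorphism $W\simeq\bigfree_{i\in I}X_i$ compatible with the canonical embeddings.
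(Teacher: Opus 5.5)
Your proof is correct and follows the paper's approach. In (1) you relabel the canonical inclusions, and in (2) you show that the composed inclusions $\psi_J\varphi^J_i$ satisfy the universal property of the free product of $(X_i)_{i\in I}$ and then invoke the uniqueness Remark; you also fill in details the paper leaves implicit, namely the uniform bound $\|T^J\|\le M$ and the two-step uniqueness argument.
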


\begin{proof}
Both statements follow directly from uniqueness of the free product. In (1) one simply relabels the canonical inclusions. In (2) the compositions of the canonical inclusions
\[
X_i\longrightarrow \bigfree_{i\in J}X_i \longrightarrow \bigfree_{J\in\mathcal{P}}\Big(\bigfree_{i\in J}X_i\Big)
\]
satisfy the universal property of the free product of the family $(X_i)_{i\in I}$.
\end{proof}

\begin{proposition}\label{prop:dense-sublattice}
The sublattice generated by $\bigcup_{i\in I}\varphi_i(X_i)$ is dense in $\bigfree_{i\in I}X_i$.
\end{proposition}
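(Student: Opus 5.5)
Two routes are available, and I would give the concrete one, keeping the categorical one as a cross-check.

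\emph{Concrete route.} Work with the model constructed in the proof of Theorem~\ref{thm:existence}, namely $X=\FBL[Z_\textbf{X}]/J$ with quotient map $q$; by the uniqueness remark, any free product is lattice isometric to this one through an isomorphism compatible with the $\varphi_i$. By \cite[Theorem 2.4]{freebasico}, $\FBL[Z_\textbf{X}]$ is the closed sublattice generated by $\delta_{Z_\textbf{X}}(Z_\textbf{X})$, so $\lat(\delta_{Z_\textbf{X}}(Z_\textbf{X}))$ is dense there. Since $q$ is a surjective, contractive lattice homomorphism, it maps $\lat(\delta_{Z_\textbf{X}}(Z_\textbf{X}))$ onto $\lat(q\delta_{Z_\textbf{X}}(Z_\textbf{X}))$, and this image is dense in $X$. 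It then remains to show that $q\delta_{Z_\textbf{X}}(Z_\textbf{X})$ lies in the closure of $\lat(\bigcup_i\varphi_i(X_i))$. For a finitely supported $z=\sum_{k=1}^n\iota_{i_k}x_k$, linearity gives $q\delta_{Z_\textbf{X}}(z)=\sum_k\varphi_{i_k}(x_k)$, which lies in the linear span of the $\varphi_i(X_i)$. Finitely supported vectors are dense in the $\ell_1$-sum $Z_\textbf{X}$, and $q\delta_{Z_\textbf{X}}$ is contractive, so every $q\delta_{Z_\textbf{X}}(z)$ is a limit of such sums. Because the closure of a sublattice is again a sublattice, the closure of $\lat(\bigcup_i\varphi_i(X_i))$ contains $\lat(q\delta_{Z_\textbf{X}}(Z_\textbf{X}))$, hence equals $X$.

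\emph{Categorical route.} Let $Y$ be the closure of $\lat(\bigcup_i\varphi_i(X_i))$, a closed sublattice of the free product. The $\varphi_i$ corestrict to contractive lattice homomorphisms $X_i\to Y$, and the universal property yields $T:\bigfree X_i\to Y$ with $T\varphi_i=\varphi_i$. Composing with the inclusion $j:Y\to\bigfree X_i$ gives $jT\varphi_i=\varphi_i=\mathrm{id}\,\varphi_i$. Uniqueness in the universal property then forces $jT=\mathrm{id}$, so $j$ is surjective and $Y=\bigfree X_i$. This argument is independent of any model.

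The only step needing care is the passage from finitely supported to arbitrary elements of $Z_\textbf{X}$ in the concrete route, and it is handled by continuity of $q\delta_{Z_\textbf{X}}$ together with density in the $\ell_1$-sum.
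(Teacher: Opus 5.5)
Your proposal is correct. Your categorical route is essentially the paper's argument, while your preferred concrete route is genuinely different.

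The paper argues purely categorically. It takes $Z=\overline{\lat(\bigcup_i\varphi_i(X_i))}$ and verifies that $Z$, with the corestricted inclusions, itself satisfies the universal property: existence by restricting $\bigfree_i T_i$, uniqueness by density inside $Z$. It then uses uniqueness of free products to obtain a lattice isometry $\Phi:Z\to\bigfree_i X_i$ fixing the canonical copies, which must be the inclusion and is onto. Your categorical version streamlines this slightly. Comparing $jT$ with the identity needs only existence into $Y$ and uniqueness into $\bigfree_i X_i$, so you never have to check that $Y$ is itself a free product.

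Your concrete route works instead in the model $\FBL[Z_{\mathbf X}]/J$. It pushes density of $\lat(\delta_{Z_{\mathbf X}}(Z_{\mathbf X}))$ through the surjection $q$. It then approximates each $q\delta_{Z_{\mathbf X}}(z)$ by finite sums $\sum_k\varphi_{i_k}(x_k)$, using density of finitely supported vectors in the $\ell_1$-sum and the fact that closures of sublattices are sublattices. Every step is sound.

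That route has two merits. It makes explicit the density fact that the uniqueness part of Theorem~\ref{thm:existence} already invokes without proof. It also shows concretely how the generators of the quotient come from the factors. On the other hand, it depends on a particular construction and on the description of $\FBL[E]$ as the closed sublattice generated by $\delta_E(E)$. It also needs an extra transfer step, via the uniqueness of free products, to reach an arbitrary free product. The categorical argument is model-free and is the standard one for coproducts, so it is the more economical choice here.
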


\begin{proof}
Let
\[
Z=\overline{\lat\Big(\bigcup_{i\in I}\varphi_i(X_i)\Big)} \subseteq \bigfree_{i\in I}X_i.
\]
Given any uniformly bounded family of lattice homomorphisms $T_i:X_i\to Y$, the free product homomorphism $\bigfree_{i\in I}T_i:\bigfree_{i\in I}X_i\to Y$ restricts to a lattice homomorphism $(\bigfree_{i\in I}T_i)|_Z:Z\to Y$ extending all $T_i$. The uniqueness of $(\bigfree_{i\in I}T_i)|_Z$ follows because its values are determined on the dense sublattice generated by the canonical copies of the $X_i$. Hence $Z$, together with the same canonical inclusions, is itself a free product of $(X_i)_{i\in I}$. By uniqueness there exists a lattice isometric isomorphism $\Phi:Z\to \bigfree_{i\in I}X_i$ fixing each canonical copy of $X_i$. Therefore $\Phi$ is the identity on a dense sublattice of $Z$, hence on all of $Z$. Since $\Phi$ is surjective, we conclude that $Z=\bigfree_{i\in I}X_i$.
\end{proof}

\begin{remark} An alternative proof of the above statement goes as follows: if there were an element $z\in \bigfree_{i\in I}X_i\backslash Z$, then take a positive functional $h\in (\bigfree_{i\in I} X_i)^*_+$ such that $h|_Z=0$ and $h(z)\neq 0$. A standard application of Kakutani's representation theorem for $AL$-spaces yields that the functional $h$ induces a lattice homomorphism $\bigfree_{i\in I} X_i\rightarrow L_1(\mu)$ that vanishes on $Z$ but not at $z$. This is in contradiction with the uniqueness of the extension.
\end{remark}

\begin{remark}
In view of last proposition, the density character of a free product of Banach lattices $\bigfree_{i\in I} X_i$ with $\text{dens}(X_i)=\kappa_i$ is 
\begin{align*}
\text{dens}(\bigfree_{i\in I}X_i)=\sum_{i\in I}\kappa_i=\max\{|I|,\sup_{i\in I}\kappa_i\}.
\end{align*}

The inequality $\geq$ is evident. On the other hand, if we take dense subsets $D_i$ of $\varphi_i(X_i)$ of cardinality $\kappa_i$, $lat\{D_i:i\in I\}$ (which has cardinal $\sum_{i\in I} \kappa_i$),  is dense in $\bigfree_{i\in I} X_i$ by last proposition.
\end{remark}

\begin{proposition}\label{prop:complemented}
For every $i\in I$, the canonical copy $\varphi_i(X_i)\subseteq \bigfree_{j\in I}X_j$ is the range of a contractive lattice projection. Moreover, the closed linear span of $\bigcup_{i\in I}\varphi_i(X_i)$ is linearly isometric to $\big(\bigoplus_{i\in I}X_i\big)_{\ell_1}$ and is complemented in $\bigfree_{i\in I}X_i$.
\end{proposition}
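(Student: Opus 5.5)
For the first claim, I will reuse the construction in the proof of Remark~\ref{rem:isometric-copies}. Fix $i\in I$ and let $T:\bigfree_{j\in I}X_j\to X_i$ be the free product homomorphism induced by $T_i=\mathrm{id}_{X_i}$ and $T_j=0$ for $j\neq i$. By the universal property $\|T\|\le 1$, and $T\varphi_i=\mathrm{id}_{X_i}$. I then set $P_i=\varphi_iT$. This is a composition of lattice homomorphisms, hence a lattice homomorphism, and $\|P_i\|\le\|\varphi_i\|\|T\|\le 1$. Moreover $P_i^2=\varphi_i(T\varphi_i)T=P_i$, and the range of $P_i$ is exactly $\varphi_i(X_i)$ because $T$ is onto $X_i$. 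This settles the first part.

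For the second part, I will define $\Phi:\big(\bigoplus_{i\in I}X_i\big)_{\ell_1}\to\bigfree_{i\in I}X_i$ by $\Phi((x_i)_i)=\sum_i\varphi_i x_i$. The series converges absolutely because $\|\varphi_i x_i\|\le\|x_i\|$ and $\sum_i\|x_i\|<\infty$. Hence $\Phi$ is contractive, and its range is dense in the closed linear span of $\bigcup_i\varphi_i(X_i)$. The main obstacle is the lower estimate $\|\sum_i\varphi_ix_i\|\ge\sum_i\|x_i\|$. The natural maps into $X_i$ only recover each $\|x_i\|$ separately, so I need a single target lattice that sees all the norms at once.

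To get this lower bound, fix finitely many nonzero $x_{i_1},\dots,x_{i_n}$ and $\varepsilon>0$. For each $k$, I choose a norm-one functional $f_k\in X_{i_k}^*$ with $f_k(x_{i_k})\ge\|x_{i_k}\|-\varepsilon/n$. Each $f_k$ is not a lattice homomorphism, so I will instead take $T_{i_k}:X_{i_k}\to L_1(\mu_k)$ to be a contractive lattice homomorphism and $g_k\in L_\infty(\mu_k)$ with $\|g_k\|_\infty\le1$ and $\langle g_k,T_{i_k}x_{i_k}\rangle=f_k(x_{i_k})$. This comes from the Kakutani argument already used in the Remark after Proposition~\ref{prop:dense-sublattice}: write $f_k=f_k^+-f_k^-$ with $\|f_k^+\|+\|f_k^-\|=1$, and map $X_{i_k}$ into $L_1$ of the AL-space generated by $|f_k|$, where $\|T_{i_k}x\|=|f_k|(|x|)\le\|x\|$. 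Next I put the $L_1(\mu_k)$ disjointly into $Y=L_1(\mu)$ with $\mu=\bigsqcup_k\mu_k$, and set $T_j=0$ for the remaining indices. The free product homomorphism $T$ into $Y$ is contractive, so
\[
\Big\|\sum_k\varphi_{i_k}x_{i_k}\Big\|\ge\Big\|\sum_kT_{i_k}x_{i_k}\Big\|_{L_1}\ge\sum_k\langle g_k,T_{i_k}x_{i_k}\rangle\ge\sum_k\|x_{i_k}\|-\varepsilon.
\]
The middle inequality holds because $\sum_k g_k$ has sup-norm at most $1$ on the disjoint union. Since $\varepsilon$ is arbitrary, this gives the reverse inequality on finitely supported vectors. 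By density, $\Phi$ is an isometry, so it has closed range equal to the closed span.

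For complementation, I will consider $R=\Phi\circ\big(\bigoplus_iT^{(i)}\big)$, where $T^{(i)}:\bigfree_jX_j\to X_i$ is the map used for $P_i$. The difficulty is that $x\mapsto(T^{(i)}x)_i$ need not land in the $\ell_1$-sum. I will avoid this by using the $\ell_1$-target directly: take $Y=\big(\bigoplus_iX_i\big)_{\ell_1}$, which is a Banach lattice, and $T_i=\iota_i$, which are isometric lattice homomorphisms. The resulting contractive lattice homomorphism $Q:\bigfree_jX_j\to Y$ satisfies $Q\varphi_i=\iota_i$, hence $Q\Phi=\mathrm{id}$. Therefore $\Phi Q$ is a contractive projection onto the closed span, and it is even a lattice homomorphism. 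This step also gives the isometry of $\Phi$ for free ($\|x\|=\|Q\Phi x\|\le\|\Phi x\|$), so I would present it first and drop the $L_1$ argument.
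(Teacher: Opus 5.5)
Your final argument is the paper's proof. The first part uses $P_i=\varphi_iT$ with $T$ induced by $\mathrm{id}_{X_i}$ and zeros. The second part uses the free product homomorphism $Q$ into the $\ell_1$-sum with $Q\varphi_i=\iota_i$, so that $Q\Phi=\mathrm{id}$. This gives both the isometry of $\Phi$ and the contractive projection $\Phi Q$, and as you noticed, the $L_1$/Kakutani detour is then unnecessary.

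One correction: drop the aside that $\Phi Q$ is a lattice homomorphism, because it is false, since $\Phi$ itself is not one. For instance, in $\mathbb{R}\ast\mathbb{R}\simeq C([0,1])$ one gets $\Phi Q f(t)=(1-t)f(0)+tf(1)$. This map sends $\varphi_1(1)\vee\varphi_2(1)=(1-t)\vee t$ to the constant $1$, not to $(1-t)\vee t$. More conceptually, if $\Phi Q$ were a lattice homomorphism, its range would be a closed sublattice containing all factors, hence everything by Proposition~\ref{prop:dense-sublattice}. The projection is only positive and contractive, which is all the statement requires.
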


\begin{proof}
Fix $i\in I$ and define $T_i=\mathrm{id}_{X_i}$ and $T_j=0$ for $j\neq i$. If $T:\bigfree_{j\in I}X_j\to X_i$ is the corresponding free product homomorphism, then
\[
P_i=\varphi_i T
\]
is a contractive lattice projection onto $\varphi_i(X_i)$.

For the second statement, let
\[
Z_\textbf{X}=\Big(\bigoplus_{i\in I}X_i\Big)_{\ell_1}
\]
with its coordinatewise order, and let $\iota_i:X_i\to Z_\textbf{X}$ denote the canonical inclusions. By the universal property of the free product there exists a contractive lattice homomorphism
\[
T:\bigfree_{i\in I}X_i\to Z_\textbf{X}
\]
such that $T\varphi_i=\iota_i$ for all $i\in I$. On the other hand, the universal property of the $\ell_1$-sum in the category of Banach spaces yields a contractive linear operator
\[
S:Z_\textbf{X}\to \bigfree_{i\in I}X_i
\]
with $S\iota_i=\varphi_i$. Then
\[
TS\iota_i=T\varphi_i=\iota_i
\qquad (i\in I),
\]
so $TS=\mathrm{id}_{Z_\textbf{X}}$. Therefore $ST$ is a contractive projection onto $S(Z_\textbf{X})$, and $S(Z_\textbf{X})$ is linearly isometric to $Z_\textbf{X}$.
\end{proof}

In a similar way that for any family of Banach spaces $(E_i)$ the $\ell_1$-sum norm is the largest norm one can define on the algebraic sum $\bigoplus E_i$ preserving the norm of each $E_i$, the free product norm, denoted by $\|\cdot\|_*$, is maximal among lattice norms.

\begin{proposition}\label{prop:maximal-norm}
Let
\[
Z=\lat\{\varphi_i(X_i):i\in I\}\subseteq \bigfree_{i\in I}X_i.
\]
Suppose that $\|\cdot\|$ is a lattice norm on $Z$ such that
\[
\|\varphi_i(x)\|\leq \|x\|_{X_i}
\qquad (x\in X_i,\ i\in I).
\]
Then
\[
\|z\|\leq \|z\|_{\ast}
\qquad (z\in Z),
\]
where $\|\cdot\|_{\ast}$ denotes the free product norm.
\end{proposition}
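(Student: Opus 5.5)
The plan is to use the universal property of the free product, with target the completion of $(Z,\|\cdot\|)$. First I complete the normed vector lattice. Let $Y$ be the completion of $Z$ with respect to $\|\cdot\|$. Since $\|\cdot\|$ is a lattice norm, the lattice operations are $\|\cdot\|$-uniformly continuous, so $Y$ is a Banach lattice. If $\|\cdot\|$ is only a seminorm one first quotients by its null ideal, but the statement assumes a norm. The formal identity $j:(Z,\|\cdot\|_\ast)\to Y$ is then an injective lattice homomorphism.

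Next I apply the universal property. For each $i\in I$ define $T_i=j\varphi_i:X_i\to Y$. Each $T_i$ is a lattice homomorphism, since $\varphi_i$ is one with values in $Z$ and $j$ preserves the lattice operations. By hypothesis $\|T_i x\|=\|\varphi_i(x)\|\le\|x\|_{X_i}$, so $\sup_i\|T_i\|\le 1$. The free product property then gives a lattice homomorphism $T:\bigfree_{i\in I}X_i\to Y$ with $T\varphi_i=T_i$ and $\|T\|\le 1$.

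Finally I identify $T$ with $j$ on $Z$. Every $z\in Z$ is a lattice-linear expression $\Phi(\varphi_{i_1}x_1,\ldots,\varphi_{i_n}x_n)$. By Remark \ref{rem:bigfree-formula},
\[
Tz=\Phi(T_{i_1}x_1,\ldots,T_{i_n}x_n)=\Phi(j\varphi_{i_1}x_1,\ldots,j\varphi_{i_n}x_n)=j\,\Phi(\varphi_{i_1}x_1,\ldots,\varphi_{i_n}x_n)=jz,
\]
where the third equality uses that $j$ is a lattice homomorphism. Hence $\|z\|=\|Tz\|_Y\le\|T\|\,\|z\|_\ast\le\|z\|_\ast$.

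The only delicate point is the first step: checking that the completion of a normed vector lattice with a lattice norm is a Banach lattice, and that $Z$ sits inside it as a sublattice. This is standard. It follows from $\bigl||x|-|y|\bigr|\le|x-y|$, which gives continuity of the modulus, together with closedness of the positive cone in the completion.
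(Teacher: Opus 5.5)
Your proposal is correct and follows the paper's proof essentially verbatim. Like the paper, you complete $(Z,\|\cdot\|)$, apply the universal property to the maps $j\varphi_i$, observe that the resulting contraction $T$ is the formal identity on $Z$, and conclude $\|z\|=\|Tz\|\le\|z\|_\ast$; your appeal to Remark~\ref{rem:bigfree-formula} just spells out the paper's one-line claim that $T$ agrees with the identity on $Z$.
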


\begin{proof}
Let $\widehat{Z}$ be the completion of $(Z,\|\cdot\|)$ and let $j:Z\to \widehat{Z}$ be the inclusion. For each $i\in I$, the map
\[
T_i:X_i\to \widehat{Z},\qquad T_i(x)=j(\varphi_i(x)),
\]
is a contractive lattice homomorphism. Hence there exists a contractive lattice homomorphism
\[
T:\bigfree_{i\in I}X_i\to \widehat{Z}
\]
with $T\varphi_i=T_i$ for all $i\in I$. On the dense sublattice $Z$, the map $T$ is simply the formal identity. Therefore
\[
\|z\|=\|Tz\|\leq \|z\|_{\ast}
\qquad (z\in Z).
\]
\end{proof}

\begin{remark}\label{rem:freeveclat}
For every Banach space $E$, the free Banach lattice $\FBL[E]$ can be regarded as the completion of the free vector lattice over $E$ under the largest lattice norm making the canonical map from $E$ contractive. It is not clear whether an analogous description holds for free products of Banach lattices. More precisely, if one forms the vector-lattice free product of a family $(X_i)_{i\in I}$, it is not known whether its Banach-lattice completion coincides with the dense sublattice $Z=\lat\{\varphi_i(X_i):i\in I\}$ inside $\bigfree_{i\in I}X_i$.
\end{remark}

So far, we have been able to prove the aforementioned basic properties of free products relying only on their universal property. However, it is combining the universal property with a nice representation of the space the key to go beyond. For certain Banach lattices $X_i$, we can do much better than just a quotient of $\FBL[(\bigoplus_{i\in I} X_i)_{\ell_1}]$. Next result, which will be generalized in the following sections, is an appropriate starting point in this direction. 

\begin{proposition}\label{prop:Rn}
For every $n\in \mathbb{N}$, the free product $\bigfree_{i=1}^n\mathbb{R}$ is lattice isomorphic to $C((S_{\ell_\infty^n})_+)$, where $(S_{\ell_\infty^n})_+$ denotes the positive part of the unit sphere of $\ell_\infty^n$. Under this identification, the free product norm is given by
\[
\|f\|_*=\inf\Big\{a_1+\cdots+a_n:\ |f|\leq \sum_{i=1}^n a_i \delta_{e_i},\ a_i\geq 0\Big\}.
\] 

\end{proposition}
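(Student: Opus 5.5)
The plan is to show directly that $C(K)$, with $K=(S_{\ell_\infty^n})_+$, equipped with the norm
\[
N(f)=\inf\Big\{a_1+\cdots+a_n:\ |f|\leq \sum_{i=1}^n a_i\delta_{e_i},\ a_i\geq 0\Big\}
\]
and with the maps $\varphi_i(c)=c\,\delta_{e_i}$, satisfies the universal property of the free product. Here $\delta_{e_i}(t)=t_i$ is the $i$-th coordinate function on $K$. Once this is done, the uniqueness remark after the definition gives the lattice isometric identification with $\bigfree_{i=1}^n\mathbb{R}$. The motivation is the construction in Theorem \ref{thm:existence}. There $Z_\mathbf{X}=\ell_1^n$, and $\FBL[\ell_1^n]$ consists of positively homogeneous functions on $\ell_\infty^n$. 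The ideal $J$ is generated by $\delta_{e_i}-|\delta_{e_i}|$, so it should be exactly the functions vanishing on the positive cone. The quotient then consists of positively homogeneous functions on $\mathbb{R}^n_+$, i.e.\ functions on $K$. I will not carry out this quotient computation; I will use it only as a heuristic.

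\emph{Step 1: $N$ is a complete lattice norm with the right normalizations.} $N$ is a lattice norm because the defining condition depends only on $|f|$ and is monotone in $|f|$. On $K$ we have $\max_i t_i=1$. Hence $1\le\sum_i\delta_{e_i}\le n$, which gives
\[
\|f\|_\infty\le N(f)\le n\|f\|_\infty .
\]
So $N$ is equivalent to the sup norm, and $(C(K),N)$ is a Banach lattice lattice-isomorphic to $C(K)$. Each $\varphi_i$ is a lattice homomorphism. It is contractive since $N(\delta_{e_i})\le 1$, and in fact $N(\delta_{e_i})=1$ by evaluating at the point $e_i\in K$.

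\emph{Step 2: density.} The sublattice generated by $\delta_{e_1},\dots,\delta_{e_n}$ separates the points of $K$. It also contains the constant function $1=\bigvee_i\delta_{e_i}$. By the lattice version of Stone--Weierstrass (Kakutani--Krein), it is therefore dense in $C(K)$. This density gives uniqueness of extensions.

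\emph{Step 3: existence and norm bound.} Let $T_i:\mathbb{R}\to Y$ be lattice homomorphisms, and put $y_i=T_i(1)\ge 0$, so that $\|y_i\|=\|T_i\|\le M$. For a lattice-linear expression $f=\Phi(\delta_{e_1},\dots,\delta_{e_n})$, define
\[
T f=\Phi(y_1,\dots,y_n).
\]
The key claim is the following. If $|\Phi|\le\sum_i a_i\delta_{e_i}$ holds pointwise on $K$, then
\[
|\Phi(y_1,\dots,y_n)|\le \sum_i a_i y_i \quad\text{in } Y .
\]
Since both sides are positively homogeneous, the inequality holds on all of $\mathbb{R}^n_+$, not just on $K$. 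Now replace the arguments by $y_i^+=y_i$; the expression $\Phi(y_1^+,\dots,y_n^+)$ is a lattice expression in arbitrary arguments. The transfer principle of the lattice functional calculus then applies: an inequality between lattice-linear expressions that holds in $\mathbb{R}$ holds in every vector lattice.

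From the claim, taking all $a_i=0$ shows that $T$ is well defined, since $\Phi(\delta_e)=0$ forces $\Phi(y)=0$. Taking general $a_i$ gives
\[
\|Tf\|\le\sum_i a_i\|y_i\|\le M\,N(f).
\]
So $T$ is a lattice homomorphism on a dense sublattice with $\|T\|\le M$, and it extends to all of $C(K)$. It satisfies $T\varphi_i=T_i$.

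I expect the main obstacle to be Step 3, specifically making the transfer of the pointwise inequality from $K$ to an arbitrary Banach lattice rigorous. My first approach is to reduce to $Y$ being a principal ideal, represented as a $C(L)$-space via Kakutani/Yosida. There lattice expressions are evaluated pointwise, and at each point of $L$ the values $(y_1(\ell),\dots,y_n(\ell))$ lie in $\mathbb{R}^n_+$, where the inequality is already known.
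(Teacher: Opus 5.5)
Your proof is correct, but it takes a different route from the paper.

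The paper works inside its own construction. It writes $\bigfree_{i=1}^n\mathbb{R}=\FBL[\ell_1^n]/J$ and uses the de Pagter--Wickstead identification $\FBL[\ell_1^n]\simeq C(S_{\ell_\infty^n})$ by restriction to the sphere. It then uses that closed ideals of $C(S_{\ell_\infty^n})$ are determined by their zero sets. The generators $\delta_{|ce_i|}-|\delta_{ce_i}|$ vanish exactly where $y_i\geq 0$, so restriction identifies the quotient with $C((S_{\ell_\infty^n})_+)$. Only afterwards does it compute the norm. It checks that $p$ (your $N$) is a lattice norm with $p(c\delta_{e_i})=|c|$. It gets $p\le\|\cdot\|_*$ from maximality of the free product norm (Proposition~\ref{prop:maximal-norm}), and $\|\cdot\|_*\le p$ from solidity and the triangle inequality.

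You instead guess the model $(C(K),N)$ and verify the universal property directly. The space and the norm formula then come out together, and the lattice isometry follows from the uniqueness remark. This frees you from the representation of $\FBL[\ell_1^n]$, the ideal/zero-set correspondence and the maximality proposition. The price is that you must prove well-definedness and the bound $\|Tf\|\le M\,N(f)$ by hand, which is exactly your key claim.

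Your handling of the key claim is sound:
\begin{itemize}
\item positive homogeneity moves the inequality from $K$ to $\mathbb{R}^n_+$;
\item composing with $s\mapsto s^+$ makes it valid on all of $\mathbb{R}^n$;
\item the transfer principle carries it to $Y$. Alternatively, use the Kakutani representation of the principal ideal generated by $y_1+\dots+y_n$, which is a genuine $C(L)$ because Banach lattices are uniformly complete.
\end{itemize}
Your remaining inputs are standard: Kakutani--Krein density (the generated vector sublattice contains $1=\bigvee_i\delta_{e_i}$ and separates points), and automatic continuity of lattice homomorphisms for uniqueness. One small slip: the lower bound $\|f\|_\infty\le N(f)$ comes from $0\le t_i\le 1$ on $K$, giving $\sum_i a_it_i\le\sum_i a_i$, not from $\sum_i\delta_{e_i}\le n$.

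The paper's route is shorter given the cited results, and its maximality-based norm computation is the template reused in Theorem~\ref{thm:CK-join}. Yours is more self-contained and follows the same ``verify the universal property of a candidate'' strategy the paper uses in Proposition~\ref{prop:fbl-preserves}.
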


\begin{proof}
By construction, $\bigfree_{i=1}^n\mathbb{R}=\FBL[\ell_1^n]/J$, where $J$ is the closed ideal generated by the elements $\delta_{|ce_i|}-|\delta_{ce_i}|$, with $c\in \mathbb R$, $i=1,\ldots,n$. But we can lattice isomorphically identify $\FBL[\ell_1^n]$ with $C(S_{\ell_\infty^n})$ (see Proposition 5.3, \cite{projfree}) via the restriction to the sphere $S_{\ell_{\infty}^n}$. So we can regard $J$ as the set of continuous functions of $S_{\ell_\infty^n}$ that vanish in the common zeroes of the functions $\delta_{|ce_i|}-|\delta_{ce_i}|$ restricted the sphere. Note that given $(y_1,\ldots,y_n)\in S_{\ell_\infty^n}$, 
\[
|\delta_{|ce_i|}-|\delta_{ce_i}||(y_1,\ldots,y_n)=||c|y_i-|cy_i||=|c||y_i-|y_i||,
\]
which vanishes exactly when $y_i$ is positive. So the common zeroes are just the positive vectors of the sphere, $(S_{\ell_\infty^n})_+$, and the functions in $J$ are exactly the ones that vanish on the given set. Thus, the restriction to the positive part of the sphere induces a lattice isomorphism $\FBL[\ell_1]/J\longrightarrow C((S_{\ell_\infty^n})_+)$.

Now define, for $f\in C((S_{\ell_\infty^n})_+)$,
\[
p(f)=\inf\Big\{a_1+\cdots+a_n:\ |f|\leq \sum_{i=1}^n a_i \delta_{e_i},\ a_i\geq 0\Big\}.
\]
This is a lattice norm, and it is equivalent to the supremum norm. Moreover, for each $i$ and each $c\in \mathbb{R}$,
\[
p(c\delta_{e_i})=|c|,
\]
because $|c\delta_{e_i}|\leq a_1\delta_{e_1}+\cdots+a_n\delta_{e_n}$ implies $|c|\leq a_i$ by evaluation at $e_i\in (S_{\ell_\infty^n})_+$, while the reverse inequality is immediate.

Thus $p$ preserves the norm of each canonical copy of $\mathbb{R}$. By Proposition~\ref{prop:maximal-norm}, we have $p(f)\leq \|f\|_{\ast}$. Conversely, if $|f|\leq \sum_{i=1}^n a_i \delta_{e_i}$, then
\[
\|f\|_{\ast}\leq \sum_{i=1}^n a_i\|\delta_{e_i}\|_{\ast}=\sum_{i=1}^n a_i.
\]
Taking the infimum yields $\|f\|_{\ast}\leq p(f)$, and hence $p=\|\cdot\|_{\ast}$.
\end{proof}

As a consequence, the free product of Banach lattices is not order continuous. In fact, it need not be order complete. In particular, the free product of order complete Banach lattices need not be order complete as the following shows: $\mathbb{R}$ is order continuous (and order complete). However, $\mathbb{R}*\mathbb{R}\simeq C((S_{\ell_\infty^2})_+)\simeq C([0,1])$ is not order complete, because $[0,1]$ is not extremally disconnected; see \cite[Proposition 1.a.4]{LT2}

The way we expressed the free product norm in last proposition suggests the following

\begin{proposition}\label{prop:unit-ball}
The unit ball of $\bigfree_{i\in I}X_i$ is the closed solid convex hull of $\bigcup_{i\in I}\varphi_i(B_{X_i})$.
\end{proposition}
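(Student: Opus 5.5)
Let $C$ denote the closed solid convex hull of $\bigcup_{i\in I}\varphi_i(B_{X_i})$ in $\bigfree_{i\in I}X_i$, and write $B_*$ for the closed unit ball of $\bigfree_{i\in I}X_i$. The plan is to prove the two inclusions $C\subseteq B_*$ and $B_*\subseteq C$.

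The inclusion $C\subseteq B_*$ is immediate. By Remark~\ref{rem:isometric-copies} each $\varphi_i(B_{X_i})$ lies in $B_*$. Moreover, $B_*$ is closed and convex, and it is solid because the norm is a lattice norm.

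For the reverse inclusion, $C$ is closed, absolutely convex (solid sets are balanced) and solid. Let $\rho$ be its Minkowski functional on $W=\bigcup_{t>0}tC$. The first step is to show that $C$ is absorbing, so that $W$ is the whole space. Since $W$ is a solid convex cone stable under $x\mapsto -x$, it is an ideal. It contains each $\varphi_i(X_i)$, and hence the sublattice $Z=\lat\{\varphi_i(X_i)\}$, because ideals are sublattices.

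The obstacle is to pass from $Z$ to the whole space, since $W$ need not be closed a priori. To get around this, I would avoid absorbency altogether and use a duality argument instead.

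Suppose $z\notin C$. Since $C$ is closed and convex, Hahn--Banach yields $h\in(\bigfree X_i)^*$ with $h(z)>\sup_{c\in C}h(c)$. Because $C$ is solid, $\sup_C h=\sup_C |h|$ with $|h|$ the modulus in the dual lattice. Indeed, for $c\in C$ with $c\ge0$ we have $|h|(c)=\sup\{h(y):|y|\le c\}$, and such $y$ lie in $C$. It also gives $|h|(z)\ge h(z)$, so replacing $h$ by $|h|$ we may assume $h\ge0$.

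Next, normalize so that $\sup_C h=1<h(z)$. Then for every $i$ and $x\in B_{X_i}$ we get $h(\varphi_i|x|)\le1$, so $\|h\circ\varphi_i\|\le1$ for each $i$.

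Now consider the lattice homomorphism into an AL-space, as in the remark after Proposition~\ref{prop:dense-sublattice}. Via Kakutani, the positive functional $h$ defines a lattice homomorphism $T:\bigfree X_i\to L_1(\mu)$ with $\|Tx\|=h(|x|)$ (completion of the space modulo $\ker$ of $h(|\cdot|)$). Then $T_i=T\varphi_i$ are lattice homomorphisms with $\|T_i\|=\|h\circ\varphi_i\|\le1$. By uniqueness in the universal property, $T$ coincides with $\bigfree T_i$, whose norm is at most $\sup_i\|T_i\|\le1$. Hence $h(z)\le h(|z|)=\|Tz\|\le\|z\|_*$. Therefore $\|z\|_*>1$, i.e.\ $z\notin B_*$.

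This gives $B_*\subseteq C$ and completes the plan. The only delicate step is constructing $T$ with $\|Tx\|=h(|x|)$ and checking it is a lattice homomorphism, which is standard: $L$-norm seminorm $x\mapsto h(|x|)$, quotient by its null ideal, and complete to obtain an AL-space.
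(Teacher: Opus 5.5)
Your proof is correct, but it takes a different route from the paper's.

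\textbf{The paper's route.} The paper argues primally. It first shows that every element of the dense sublattice $Z=\lat\{\varphi_i(X_i)\}$ lies in some $\lambda C$, because a lattice-linear expression is dominated by a finite positive combination of elements of $\bigcup_i\varphi_i(B_{X_i})$. Hence the gauge $p$ of $C$ is a lattice norm on $Z$ that is contractive on each factor. Proposition~\ref{prop:maximal-norm} then gives $p\le\|\cdot\|_*$ on $Z$, so $B_*\cap Z\subseteq C$. Density of $Z$ and closedness of $C$ finish, since $B_*=\overline{B_*\cap Z}$. So the obstacle you identified does not actually arise: you never need $C$ to absorb the whole space, only $Z$.

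\textbf{Your route.} You separate a point $z\notin C$ by a positive functional $h$. You then turn $h$ into a lattice homomorphism into the AL-space built from the L-seminorm $x\mapsto h(|x|)$, and apply the norm bound in the universal property there. This bypasses the absorbency and density steps, at the cost of Hahn--Banach, the Riesz--Kantorovich formula and the AL-space construction. In effect you prove the equivalent dual statement that $\|h\|=\sup_i\|h\circ\varphi_i\|$ for every positive $h\in(\bigfree_{i\in I}X_i)^*$.

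\textbf{Common mechanism.} Both arguments apply the universal property to an auxiliary Banach lattice norm that is contractive on the factors. The paper uses the gauge of $C$ itself; you use the seminorms $h(|\cdot|)$.

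\textbf{Two small points to tidy.}
\begin{enumerate}
\item The claim $|h|(z)\ge h(z)$ is false for non-positive $z$: take $h=-\mathrm{id}$ on $\mathbb R$ and $z=-1$. What you actually need is $|h|(|z|)\ge|h(z)|\ge h(z)$, which is true. Alternatively, assume $z\ge0$ from the start, since $C$ and $B_*$ are solid.
\item The normalization $\sup_C h=1$ presupposes $\sup_C h>0$. This follows from your own argument: $\sup_C h=0$ would force every $T_i=0$, hence $T=0$ and $h(|z|)=0$.
\end{enumerate}
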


\begin{proof}
Let
\[
A=\bigcup_{i\in I}\varphi_i(B_{X_i})
\]
and let $C$ be the closed solid convex hull of $A$. Since
\[
A\subseteq B_{\bigfree_{i\in I}X_i},
\]
we have
\[
C\subseteq B_{\bigfree_{i\in I}X_i}.
\]

Let
\[
Z=\lat\{\varphi_i(X_i):i\in I\}.
\]
Every element of $Z$ belongs to $\lambda C$ for some $\lambda>0$: indeed, any lattice-linear expression in elements of $A$ is dominated by a finite positive combination of elements of $A$, by repeated use of the inequalities
\[
|x\pm y|\leq |x|+|y|,\qquad |x\vee y|\leq |x|+|y|,\qquad |x\wedge y|\leq |x|+|y|.
\]
Hence the Minkowski functional
\[
p(z)=\inf\{\lambda>0:\ z\in \lambda C\},
\qquad z\in Z,
\]
is a lattice norm on $Z$.

If $x\in X_i$, then $\varphi_i(x)\in \|x\|\,C$, so $p(\varphi_i(x))\leq \|x\|$. Thus $p$ is dominated by the original norm on each factor. Proposition~\ref{prop:maximal-norm} therefore gives
\[
p(z)\leq \|z\|_{\ast}
\qquad (z\in Z).
\]
Since
\[
C\subseteq B_{\bigfree_{i\in I}X_i},
\]
the reverse inequality $\|z\|_{\ast}\leq p(z)$ is immediate. Hence $p=\|\cdot\|_{\ast}$ on $Z$, so $B_{\bigfree_{i\in I}X_i}\cap Z=C\cap Z$. Because $Z$ is dense and $C$ is closed, it follows that the unit ball of $\bigfree_{i\in I}X_i$ is exactly $C$.
\end{proof}

\section{Stability of homomorphisms}\label{sec:homomorphisms}

Given a uniformly bounded family of lattice homomorphisms $T_i:X_i\to Y_i$, we denote by
\[
\overline{\bigfree}_{i\in I}T_i:\bigfree_{i\in I}X_i\to \bigfree_{i\in I}Y_i
\]
the unique lattice homomorphism extending the maps $\psi_iT_i$, where $\psi_i:Y_i\to \bigfree_{i\in I}Y_i$ are the canonical inclusions.

\begin{proposition}\label{prop:embedding-stability}
Let $Y_i\subseteq X_i$ be closed sublattices and let $j_i:Y_i\hookrightarrow X_i$ be the inclusion maps. Then
\[
J:=\overline{\bigfree}_{i\in I}j_i:\bigfree_{i\in I}Y_i\to \bigfree_{i\in I}X_i
\]
is a lattice isometric embedding.
\end{proposition}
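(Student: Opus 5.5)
It suffices to show that $J$ is isometric on the dense sublattice $Z_Y=\lat\{\psi_i(Y_i):i\in I\}$, where $\psi_i:Y_i\to\bigfree_{i\in I}Y_i$ and $\varphi_i:X_i\to\bigfree_{i\in I}X_i$ denote the canonical embeddings. Indeed, $J$ is a contractive lattice homomorphism by construction. If it is isometric on the dense set $Z_Y$ (Proposition~\ref{prop:dense-sublattice}), it is isometric everywhere, and a lattice isometry is automatically an isometric lattice embedding. Since contractivity gives $\|Jz\|\le\|z\|_*$, the task is the reverse inequality $\|z\|_*\le\|Jz\|$ for $z\in Z_Y$.

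The natural strategy is to produce, for the norm computation, a contractive lattice homomorphism $R$ from $\bigfree_{i\in I}X_i$ into some Banach lattice $W$ that receives $\bigfree_{i\in I}Y_i$ isometrically, in such a way that $R J$ equals that isometric inclusion. Then $\|z\|_*=\|RJz\|\le\|Jz\|$. To build $W$ we use isometric push-outs, which is where \cite[Theorem~4.4]{amalg} enters. First treat a single index: suppose $Y_k\subseteq X_k$ for one $k$ and $Y_i=X_i$ for all $i\neq k$. Set $A=\bigfree_{i\in I}Y_i$ and form the isometric push-out of the two maps
\[
j_k:Y_k\to X_k,\qquad \psi_k:Y_k\to A .
\]
Both maps are isometric lattice embeddings: $j_k$ by hypothesis, $\psi_k$ by Remark~\ref{rem:isometric-copies}. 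Hence the push-out $W$ comes with isometric lattice embeddings $\beta_1:X_k\to W$ and $\beta_2:A\to W$ satisfying $\beta_1 j_k=\beta_2\psi_k$. The family $\beta_1$ on $X_k$ and $\beta_2\psi_i$ on $X_i=Y_i$ for $i\ne k$ consists of contractive lattice homomorphisms. The universal property therefore yields a contractive $R:\bigfree_{i\in I}X_i\to W$ with $R\varphi_k=\beta_1$ and $R\varphi_i=\beta_2\psi_i$. We then check $RJ\psi_i=\beta_2\psi_i$ for every $i$: for $i=k$ this is $R\varphi_k j_k=\beta_1j_k=\beta_2\psi_k$. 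By uniqueness in the universal property, $RJ=\beta_2$, which is isometric. This settles the single-index case.

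For the general case, one option is to do all the push-outs simultaneously. I expect this to be the main obstacle, since \cite{amalg} handles push-outs of two maps, not arbitrary families. The fix I would try first is to reduce to finitely many changed factors and then to $I$ finite. Fix $z\in Z_Y$; it involves only finitely many indices $F$. Changing the factors one at a time, the composite of the single-index embeddings above, for $k\in F$ in turn, factors $J$ restricted to the relevant copies. Each step is an isometric embedding, so the composite is isometric. The composite agrees with $J$ because both are lattice homomorphisms agreeing on the generators $\psi_i(Y_i)$, with $Y_i$ replaced by $X_i$ for $i\notin F$.

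To make this rigorous, let $J_F:\bigfree_{i\in I} Y_i\to\bigfree_{i\in I}Y_i^F$ be the map that enlarges only the factors in $F$, where $Y_i^F=X_i$ if $i\in F$ and $Y_i^F=Y_i$ otherwise. By induction on $|F|$ via the single-index case, $J_F$ is an isometric embedding. Next, write $J=J'J_F$, where $J'$ enlarges the factors outside $F$. Here a second difficulty appears: $J'$ is only known to be contractive. We handle it by taking the finitely many indices in $F$ as the only ones that matter. Apply the projection-type homomorphism sending $X_i\to Y_i$ for $i\notin F$? No such retraction exists in general. So instead we run the single-index push-out argument with $k$ replaced by the single Banach lattice $X_{I\setminus F}':=\bigfree_{i\notin F}X_i$ and its sublattice $\bigfree_{i\notin F}Y_i$, using associativity. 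Whether that latter inclusion is isometric is circular, so if the circularity cannot be broken, the fallback is a transfinite or limit argument, or a direct generalisation of \cite[Theorem~4.4]{amalg} to wide push-outs of families of isometric embeddings over a common object.
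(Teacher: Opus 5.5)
Your single-index step is correct: one push-out of $j_k$ and $\psi_k$, followed by $RJ=\beta_2$, gives the isometry. Composing such steps then shows that $J_F$ is an isometric embedding whenever only finitely many factors change. This is a leaner route to the finite case than the paper's. For $I=\{1,2\}$ the paper forms push-outs of $(j_1,\psi_1)$ and $(\psi_2,j_2)$, then a third push-out of the two resulting maps, and then inducts.

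The gap is the passage to infinitely many changed factors. Your proposal ends with an acknowledged circularity: you would need $\bigfree_{i\notin F}Y_i\to\bigfree_{i\notin F}X_i$ to be isometric, which is the statement itself. The fallback you offer (a transfinite argument or wide push-outs) is never carried out. So as written, the proposition is only proved when $Y_i=X_i$ for all but finitely many $i$.

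The missing idea is that you never need to enlarge the factors outside $F$. Since $z$ does not involve them, you can send them to $0$, which is a perfectly good lattice homomorphism; no retraction $X_i\to Y_i$ is needed.

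Fix a finite $F\subseteq I$ and $z\in\lat\{\psi_i(Y_i):i\in F\}$. Define:
\begin{itemize}
\item $\sigma:\bigfree_{i\in I}Y_i\to\bigfree_{i\in F}Y_i$ and $\rho:\bigfree_{i\in I}X_i\to\bigfree_{i\in F}X_i$, induced by the canonical inclusions for $i\in F$ and the zero maps for $i\notin F$;
\item $\tau:\bigfree_{i\in F}Y_i\to\bigfree_{i\in I}Y_i$, induced by the canonical inclusions;
\item $J^F=\overline{\bigfree}_{i\in F}j_i$.
\end{itemize}
All these maps are contractive. Checking on generators gives $\tau\sigma z=z$ and $\rho J=J^F\sigma$. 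Since $J^F$ is isometric by your finite case,
\[
\|z\|=\|\tau\sigma z\|\le\|\sigma z\|=\|J^F\sigma z\|=\|\rho Jz\|\le\|Jz\|.
\]
Such $z$ are dense, which finishes the proof.

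This is precisely the paper's last step. There it is phrased as the fact that $\overline{\lat}(Y_{i_1},\dots,Y_{i_m})\simeq Y_{i_1}\ast\cdots\ast Y_{i_m}$ isometrically inside $\bigfree_{i\in I}Y_i$, by associativity plus Remark~\ref{rem:isometric-copies}, and likewise for the $X_i$.
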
 

\begin{proof}
We first write the proof for the case $I=\{1,2\}$, then deal with the general case.

We have a commutative diagram
$$
\xymatrix{X_1 \ar@/^1.5pc/@[black][rr]^{\varphi_1} & & X_1*X_2  \\ Y_1 \ar[r]^{\psi_1}  \ar[u]^{j_1} & Y_1*Y_2 \ar[ru]^{J} \\ 0 \ar[u] \ar[r] & Y_2 \ar[u]^{\psi_2} \ar[r]^{j_2} & X_2 \ar@/_1.5pc/@[black][uu]^{\varphi_2}}
$$
where $\psi_i:Y_i\rightarrow Y_1*Y_2$ are the canonical inclusions, and $J=j_1\overline\ast j_2$. Now we can consider the push-outs of the maps $j_i$ and $\psi_i$, which we denote by $PO_i$, for $i=1,2$:
$$
\xymatrix{X_1 \ar[r]^{S_1}  \ar@/^1.5pc/@[black][rr]^{\varphi_1} & PO_1 & X_1*X_2  \\ Y_1 \ar[r]^{\psi_1}  \ar[u]^{j_1} & Y_1*Y_2 \ar[u]^{R_1} \ar[r]^{R_2} \ar[ru]^{J} & PO_2 \\ 0 \ar[u] \ar[r] & Y_2 \ar[u]^{\psi_2} \ar[r]^{j_2} & X_2 \ar[u]^{S_2} \ar@/_1.5pc/@[black][uu]_{\varphi_2}}
$$

Finally, we take the push-out of $R_1$ and $R_2$, which we call $PO$, and let $W=T_1R_1*T_2R_2$:
$$
\xymatrix{&&& PO \\ X_1 \ar[r]^{R_1}  \ar@/^1.5pc/@[black][rr]^{\varphi_1} & PO_1 \ar@/^1.5pc/@[black][rru]^{T_1} & X_1*X_2 \ar[ru]^{W} \\ Y_1 \ar[r]^{\psi_1}  \ar[u]^{j_1} & Y_1*Y_2 \ar[u]^{S_1} \ar[r]^{S_2} \ar[ru]^{J} & PO_2 \ar@/_1.5pc/@[black][ruu]^{T_2} \\ 0 \ar[u] \ar[r] & Y_2 \ar[u]^{\psi_2} \ar[r]^{j_2} & X_2 \ar[u]^{R_2} \ar@/_1.5pc/@[black][uu]_{\varphi_2}}
$$ 

Recall that push-outs of Banach lattices preserve isometries \cite[Theorem 4.4]{amalg}. Then, since $j_2$ and $j_1$ are isometries, so are $S_1$ and $S_2$, and thus, $T_1$ and $T_2$. 

In order to see that $J$ is an isometry, it is enough to prove $T_1S_1=WJ$, because $T_1S_1$ is an isometry and all arrows are contractive. Let us prove, then, that those compositions agree: indeed, $T_1S_1\psi_1=T_1R_1j_1=W\varphi_1j_1=WJ\psi_1$, where the last two equalities follow from the definition of $J$ and $W$ respectively. Similarly, $T_1S_1\psi_2=WJ\psi_2$. But $lat\{\psi_1(Y_1),\psi_2(Y_2)\}$ is norm dense in $Y_1*Y_2$, so $T_1S_1=WJ$.

An elementary induction argument establishes the result for free products of finitely many Banach lattices. For the general case, it is sufficient to show that for every finite lattice linear expression $z=\Phi(\varphi_{i_1}(y_1),\ldots,\varphi_{i_n}(y_n))\in\bigfree_{i\in I} Y_i$, $\|Jz\|=\|z\|$. But note that $\overline{lat}(Y_{i_1},\ldots,Y_{i_m})\simeq Y_{i_1}*\ldots * Y_{i_m}$ isometrically, so we have the following commutative square
$$
\xymatrix{ \overline{lat}(Y_{i_1},\ldots,Y_{i_m}) \ar[r]^{J_|} & \overline{lat}(X_{i_1},\ldots,X_{i_m}) \\ Y_{i_1}*\ldots *Y_{i_m} \ar[r]^{j_{i_1}\overline\ast\ldots\overline\ast j_{i_n}} \ar[u] & X_{i_1}*\ldots *X_{i_m} \ar[u]} 
$$
The lower horizontal arrow is an isometry by what we have already seen. Hence, $\|J(z)\|=\|z\|$.
\end{proof}

For a family $\bold{X}=(X_i)_{i\in I}$ of Banach lattices, let
\[
Z_{\bold{X}}=\Big(\bigoplus_{i\in I}X_i\Big)_{\ell_1},
\]
let $\iota_i:X_i\to Z_{\bold{X}}$ be the canonical inclusions, let $\delta_{Z_\bold{X}}:Z_{\bold{X}}\to \FBL[Z_{\bold{X}}]$ be the canonical map, and let $J_{\bold{X}}$ denote the closed ideal of $\FBL[Z_{\bold{X}}]$ generated by
\[
\delta_{Z_\bold{X}}(\iota_i|x|)-|\delta_{Z_{\bold{X}}}(\iota_i x)|,
\qquad i\in I,\ x\in X_i.
\]
Then
\[
\bigfree_{i\in I}X_i \simeq \FBL[Z_{\bold{X}}]/J_{\bold{X}}.
\]

For a Banach lattice $X$, let $\beta_X:\FBL[X]\to X$ be the lattice homomorphism induced by the identity map on $X$. The kernel $\ker\beta_X$ agrees with the closed ideal $J_{\bold X}$ generated by the elements $|\delta_X x|-\delta_X|x|:x\in X$. Obviously, $X$ is itself a free product of the one element family $(X)$, with embedding $id_X$. By uniqueness, we have a lattice isomorphism \[
\Phi:FBL[X]/J_{\bold X}\rightarrow X
\]
such that $\Phi\varphi_1=id_X$. But $\Phi q$ agrees with $\beta$ on $\delta(X)$, so in fact $\Phi q=\beta$. And since $\Phi$ is injective, $\ker\beta\subset \ker q=J_{\bold X}$. To see the other inclusion, $\beta$ vanishes at every element of the form $|\delta x|-\delta|x|$. Then, $J_{\bold X}\subset \ker\beta$ by continuity. Since $\beta_X$ is a lattice projection onto the canonical copy of $X$, we also have the decomposition
\[
\FBL[X]=\delta_X(X)\oplus \ker \beta_X.
\]

\begin{lemma}
Let $T:X\to Y$ be a lattice homomorphism between Banach lattices, and let $\overline{T}:\FBL[X]\to \FBL[Y]$ be the induced lattice homomorphism. Then
\[
\overline{T}(\delta_X(X))\subseteq \delta_Y(Y)
\qquad\text{and}\qquad
\overline{T}(\ker \beta_X)\subseteq \ker \beta_Y.
\]
\end{lemma}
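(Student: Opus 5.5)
The first inclusion is immediate from the defining property of the induced map: $\overline{T}\delta_X=\delta_Y T$ on $X$, so $\overline{T}(\delta_X x)=\delta_Y(Tx)\in\delta_Y(Y)$ for every $x\in X$. Linearity alone gives this; the lattice homomorphism hypothesis on $T$ is not needed here.

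For the second inclusion I would use the description of $\ker\beta_X$ obtained just before the statement: it is the closed ideal of $\FBL[X]$ generated by the elements $|\delta_X x|-\delta_X|x|$, $x\in X$. Since $\overline{T}$ is a lattice homomorphism,
\[
\overline{T}\bigl(|\delta_X x|-\delta_X|x|\bigr)=|\overline{T}\delta_X x|-\overline{T}\delta_X|x|=|\delta_Y Tx|-\delta_Y T|x|=|\delta_Y Tx|-\delta_Y|Tx|,
\]
where the last equality uses that $T$ is a lattice homomorphism, so $T|x|=|Tx|$. Hence each generator is mapped into the generating set of $\ker\beta_Y$.

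It remains to pass from generators to the whole closed ideal, and this is the only point needing care, since the image of an ideal under a lattice homomorphism need not be an ideal. Rather than pushing ideals forward, I would argue through the preimage. The set $\overline{T}^{-1}(\ker\beta_Y)$ is a closed ideal of $\FBL[X]$: it is closed and a linear subspace because $\overline{T}$ is bounded and linear. It is solid because if $|u|\le|v|$ with $\overline{T}v\in\ker\beta_Y$, then $|\overline{T}u|=\overline{T}|u|\le\overline{T}|v|=|\overline{T}v|$, using positivity of $\overline{T}$ and that $\overline{T}$ preserves modulus, and $\ker\beta_Y$ is solid. This closed ideal contains all generators of $\ker\beta_X$, so it contains $\ker\beta_X$, which gives $\overline{T}(\ker\beta_X)\subseteq\ker\beta_Y$.

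Alternatively, one can check that $\beta_Y\overline{T}=T\beta_X$, since both sides are lattice homomorphisms $\FBL[X]\to Y$ agreeing with $T$ on $\delta_X(X)$, and hence equal by the uniqueness part of the universal property of $\FBL[X]$. Then $x\in\ker\beta_X$ gives $\beta_Y\overline{T}x=T\beta_Xx=0$. This is probably the cleanest route.
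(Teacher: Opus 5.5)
Your proof is correct, and your main argument is the paper's. You send the generators $|\delta_X x|-\delta_X|x|$ to $|\delta_Y(Tx)|-\delta_Y|Tx|\in\ker\beta_Y$ and then pass to the closed ideal they generate. Your observation that $\overline{T}^{-1}(\ker\beta_Y)$ is a closed ideal spells out the step the paper states in one line. Your alternative via $\beta_Y\overline{T}=T\beta_X$, using uniqueness in the universal property of $\FBL[X]$, is also valid and slightly slicker, since it does not need the description of $\ker\beta_X$ as a generated ideal.
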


\begin{proof}
The first inclusion is immediate from the identity
\[
\overline{T}\,\delta_X x=\delta_Y(Tx),
\qquad x\in X.
\]

For the second inclusion, note that $\ker\beta_X$ is the closed ideal generated by the elements $|\delta_X x|-\delta_X|x|$. Since
\[
\overline{T}\bigl(|\delta_X x|-\delta_X|x|\bigr)
=
|\delta_Y(Tx)|-\delta_Y|Tx|
\in \ker\beta_Y,
\]
the ideal generated by these elements is mapped into $\ker\beta_Y$, and therefore so is its closure.
\end{proof}

\begin{remark}
Let $\bold{X}=(X_i)_{i\in I}$, $\bold{Y}=(Y_i)_{i\in I}$ and $T_i:X_i\to Y_i$ lattice homomorphisms with $\sup_i\|T_i\|<\infty$ and
\[
T=\bigoplus_{i\in I}T_i:Z_{\bold{X}}\to Z_{\bold{Y}},
\]
then the induced homomorphism $\overline{T}:\FBL[Z_{\bold{X}}]\to \FBL[Z_{\bold{Y}}]$ satisfies $\overline{T}(J_\bold{X})\subseteq J_{\bold{X}}$. Consequently, it induces a lattice homomorphism
\[
S:\bigfree_{i\in I}X_i\to \bigfree_{i\in I}Y_i,
\]
and by uniqueness this induced map is exactly $\overline{\bigfree}_{i\in I}T_i$.
\end{remark}

\begin{proposition}\label{prop:surjective-factors}
Let $T_i:X_i\to Y_i$ be lattice homomorphisms with $\sup_{i\in I}\|T_i\|<\infty$. If
\[
\overline{\bigfree}_{i\in I}T_i:\bigfree_{i\in I}X_i\to \bigfree_{i\in I}Y_i
\]
is surjective, then each $T_i$ is surjective. The converse holds provided the direct-sum operator $\bigoplus_{i\in I}T_i:\Big(\bigoplus_{i\in I}X_i\Big)_{\ell_1}\to \Big(\bigoplus_{i\in I}Y_i\Big)_{\ell_1}$
is surjective.
\end{proposition}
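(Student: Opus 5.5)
For the forward direction I would use the contractive lattice projections from Proposition~\ref{prop:complemented}. Let $\varphi_i$ and $\psi_i$ be the canonical inclusions into $\bigfree X_i$ and $\bigfree Y_i$. Write $P^Y_i:\bigfree_{j}Y_j\to Y_i$ for the free product homomorphism extending $\mathrm{id}_{Y_i}$ and $0$ on the other factors, and similarly $P^X_i$ on the $X$ side. The key identity to check is
\[
P^Y_i\circ \overline{\bigfree}_{j\in I}T_j = T_i\circ P^X_i .
\]
Both sides are lattice homomorphisms $\bigfree_j X_j\to Y_i$. Composed with $\varphi_j$, each gives $T_i$ when $j=i$ and $0$ when $j\neq i$. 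By the uniqueness part of the universal property (equivalently, Proposition~\ref{prop:dense-sublattice}), they coincide. Now let $y\in Y_i$. Surjectivity gives $z$ with $\overline{\bigfree}T_j(z)=\psi_i y$, and then
\[
y=P^Y_i\psi_i y=T_i(P^X_i z).
\]
So $T_i$ is surjective.

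For the converse I would use the quotient picture $\bigfree X_i\simeq \FBL[Z_{\bold X}]/J_{\bold X}$ and the preceding remark. That remark gives $q_{\bold Y}\overline{T}=\overline{\bigfree}T_i\, q_{\bold X}$, where $T=\bigoplus T_i:Z_{\bold X}\to Z_{\bold Y}$ and $\overline T:\FBL[Z_{\bold X}]\to\FBL[Z_{\bold Y}]$ is the induced homomorphism. Since $q_{\bold Y}$ is onto, it suffices to show that $\overline T$ is surjective. This reduces to the known fact that $\FBL[\cdot]$ sends surjections to surjections. I would prove that fact directly as follows. By the open mapping theorem there is $C$ such that every $w\in Z_{\bold Y}$ has a preimage of norm at most $C\|w\|$. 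Hence $T$ maps $C\,B_{Z_{\bold X}}$ onto a set containing $B_{Z_{\bold Y}}$. Then $\overline T(C\,B_{\FBL[Z_{\bold X}]})$ contains the image under $\overline T$ of $\mathrm{sco}(\delta(C B_{Z_{\bold X}}))$. This image contains the solid convex hull of $\delta(B_{Z_{\bold Y}})$, because $\overline T$ is a lattice homomorphism and, by the Riesz decomposition property, a lattice homomorphism onto a set maps the solid hull of that set onto the solid hull of its image. The unit ball of $\FBL[E]$ is the closed solid convex hull of $\delta(B_E)$; this is the special case of the analogue of Proposition~\ref{prop:unit-ball} for free Banach lattices, proved the same way. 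Consequently $\overline T(C B)$ is dense in $B_{\FBL[Z_{\bold Y}]}$. The standard successive-approximation argument from the open mapping theorem then shows that $\overline T$ is onto.

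The main obstacle is the step about solid hulls. If $|w|\le |\overline T u|=\overline T|u|$, we need a $v$ with $|v|\le|u|$ and $\overline T v=w$. This is not automatic from the image alone. To handle it, I would instead approximate in the dense sublattice. Every element of $\lat(\delta(Z_{\bold Y}))$ has the form $\Phi(\delta w_1,\dots,\delta w_n)$. Choosing preimages $x_k$ with $\|x_k\|\le C\|w_k\|$ gives $\overline T\,\Phi(\delta x_1,\dots,\delta x_n)=\Phi(\delta w_1,\dots,\delta w_n)$. So the range of $\overline T$ contains a dense sublattice, and I only need norm control. For that I would show $\|\Phi(\delta x_k)\|_{\FBL}\le C'\|\Phi(\delta w_k)\|_{\FBL}$. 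The intended route is to pick a linear lifting on the finite-dimensional span of the $w_k$; such a lifting with norm at most $C$ exists by the open mapping theorem composed with the finite-dimensional quotient. Functoriality applied to this lifting, which is a bounded operator from a subspace, combined with the fact that $\FBL$ of a subspace sits in $\FBL[Z_{\bold Y}]$ with equivalent norm, is the delicate point. If it fails, I would instead invoke the known result from the free Banach lattice literature that $\overline T$ is a quotient map whenever $T$ is.
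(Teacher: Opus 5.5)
Your forward direction is the paper's argument. With $S=\overline{\bigfree}_{i\in I}T_i$, your identity $P^Y_i\circ S=T_i\circ P^X_i$ is the paper's $P_i^YS=\psi_iT_iU_i$ with the injective map $\psi_i$ stripped off; in both cases it is checked on the factors and extended by uniqueness. For the converse, you reduce via $q_{\mathbf Y}\overline T=S\,q_{\mathbf X}$ to surjectivity of $\overline T:\FBL[Z_{\mathbf X}]\to\FBL[Z_{\mathbf Y}]$ and then fall back on citing that $\FBL[\cdot]$ preserves surjections. That is exactly what the paper does (it cites \cite[Proposition~3.2]{freebanlat}), so in that form the proposal is correct.

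The self-contained argument you sketch for that last step does not work, and it should not be presented as a proof.

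\emph{The solid-hull step is circular.} From $|w|\le\overline T|u|$ you can produce $v$ with $|v|\le|u|$ and $\overline Tv=w$ only if $w$ is already in the range: then you truncate a preimage $v_0$ to $(v_0\wedge|u|)\vee(-|u|)$.

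\emph{The lifting is not justified.} The open mapping theorem gives bounded but nonlinear selections. It does not give a linear lift of $G=\mathrm{span}\{w_k\}$ with norm $\le C$; for a Banach space quotient $\ell_1\to c_0$ no uniform bound exists. Here one can in fact be built, because $\ker T$ is a lattice ideal (via the band projection onto $(\ker T)^\perp$ and local reflexivity), but that is a different argument.

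\emph{The final comparison fails.} The canonical map $\FBL[G]\to\FBL[Z_{\mathbf Y}]$ is not an isomorphic embedding with constants independent of $G$. Suppose some $Y_i$ is $C[0,1]$ and $g_1,\dots,g_m\in Y_i$ are equivalent to the unit vector basis of $\ell_2^m$. Testing against an embedding $\ell_2^m\to\ell_1^n$ gives $\bigl\|\sum_k|\delta_{g_k}|\bigr\|_{\FBL[G]}\gtrsim m$. In $\FBL[Z_{\mathbf Y}]$ the same element has norm $\lesssim K_G\sqrt m$, where $K_G$ is Grothendieck's constant: every operator $C[0,1]\to\ell_1^n$ is $2$-summing with $\pi_2\le K_G\|\cdot\|$. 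So routing the estimate through $\FBL[G]$ loses a factor of order $\sqrt{\dim G}$.

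You need no norm estimates on lattice expressions here; the universal property does everything, as in Lemma~\ref{lem:FBL-quotient}, whose proof is independent of this proposition.
\begin{itemize}
\item Factor $T=\tilde T q$, where $q:Z_{\mathbf X}\to Z_{\mathbf X}/\ker T$ is the quotient map and $\tilde T$ is a linear isomorphism onto $Z_{\mathbf Y}$ (open mapping theorem).
\item By the lemma, $\overline q$ is, up to a lattice isometry, the quotient map $\FBL[Z_{\mathbf X}]\to\FBL[Z_{\mathbf X}]/\widehat{\ker T}$, hence onto.
\item By functoriality, $\overline{\tilde T}$ is a lattice isomorphism with inverse $\overline{\tilde T^{-1}}$.
\item By uniqueness, $\overline T=\overline{\tilde T}\,\overline q$, so $\overline T$ is onto.
\end{itemize}
This also resolves a small mismatch in your fallback: the result you cite is phrased for quotient maps, while $\bigoplus_i T_i$ is only assumed surjective.
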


\begin{proof}
Let
\[
S=\overline{\bigfree}_{i\in I}T_i:\bigfree_{i\in I}X_i\to \bigfree_{i\in I}Y_i.
\]
For each $i\in I$, let
\[
U_i:\bigfree_{j\in I}X_j\to X_i
\qquad\text{and}\qquad
V_i:\bigfree_{j\in I}Y_j\to Y_i
\]
be the free product homomorphisms obtained by taking the identity on the $i$th factor and the zero map on all other factors. Then
\[
P_i^X=\varphi_iU_i
\qquad\text{and}\qquad
P_i^Y=\psi_iV_i
\]
are the canonical contractive lattice projections onto the copies of $X_i$ and $Y_i$, respectively.

On the dense sublattice generated by the canonical copies, one has
\[
P_i^Y S=\psi_i T_i U_i.
\]
Indeed, both sides agree on each factor, so they agree everywhere by density. Now fix $y\in Y_i$. Since $S$ is surjective, there exists $x\in \bigfree_{j\in I}X_j$ with $Sx=\psi_i y$. Hence
\[
\psi_i y=P_i^Y(Sx)=\psi_i T_i(U_i x).
\]
Because $\psi_i$ is injective, we obtain
\[
y=T_i(U_i x),
\]
which proves that $T_i$ is surjective.

For the converse, surjectivity of the direct sum implies surjectivity of the induced homomorphism between the corresponding free Banach lattices (\cite[Proposition 3.2]{freebanlat}), and passing to the quotients by $J_{\bold{X}}$ and $J_{\bold{Y}}$ yields surjectivity of $\overline{\bigfree}_{i\in I}T_i$.
\end{proof}

\begin{remark}
The maps $T_n:\mathbb{R}\to \mathbb{R}$, $T_nx=x/n$, are surjective for every $n\in \mathbb{N}$, but $\oplus_{n=1}^\infty T_n:\ell_1\to \ell_1$ is not surjective. Consequently,
\[
\overline{\bigfree}_{n=1}^\infty T_n:\bigfree_{n=1}^\infty\mathbb{R}\to \bigfree_{n=1}^\infty\mathbb{R}
\]
is not surjective either.
\end{remark}

\begin{lemma}\label{lem:FBL-quotient}
Let $E$ be a Banach space and let $F\subseteq E$ be a closed subspace. Then
\[
\FBL[E/F]\simeq \FBL[E]/\widehat{F}
\]
lattice isometrically, where $\widehat{F}$ is the closed ideal of $\FBL[E]$ generated by $\delta_E(F)$.
\end{lemma}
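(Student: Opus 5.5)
The plan is to show that the quotient $\FBL[E]/\widehat F$, together with the map $\delta:=q\,\delta_E$ pushed down to $E/F$, satisfies the universal property of $\FBL[E/F]$. Here $q:\FBL[E]\to \FBL[E]/\widehat F$ is the quotient map. Uniqueness of free objects then identifies the two lattice isometrically, via the same double-universal-property argument used in the Remark after the definition of free product.

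\emph{Step 1: the map $\delta$.} Since $\delta_E(F)\subseteq\widehat F$, the operator $q\delta_E:E\to\FBL[E]/\widehat F$ vanishes on $F$. It therefore factors through a bounded linear map $\delta:E/F\to \FBL[E]/\widehat F$ with $\delta\pi=q\delta_E$, where $\pi:E\to E/F$ is the quotient map. Because $q\delta_E$ is contractive, $\|\delta\|\le1$: for any $e$, $\|\delta(\pi e)\|=\|q\delta_E(e+f)\|\le\|e+f\|$ for all $f\in F$, and we take the infimum over $f$.

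\emph{Step 2: extension and norm.} Let $T:E/F\to Y$ be bounded into a Banach lattice. Then $T\pi:E\to Y$ has the lattice homomorphism extension $\widehat{T\pi}:\FBL[E]\to Y$ with $\|\widehat{T\pi}\|=\|T\pi\|\le\|T\|$. This extension vanishes on $\delta_E(F)$. Its kernel is a closed ideal, since it is the kernel of a lattice homomorphism, so it contains $\widehat F$. Hence $\widehat{T\pi}$ factors as $\widetilde T q$, where $\widetilde T$ is a lattice homomorphism with $\|\widetilde T\|\le\|T\|$ (the quotient norm makes this immediate). We also get $\widetilde T\delta\pi=\widehat{T\pi}\delta_E=T\pi$, and since $\pi$ is onto, $\widetilde T\delta=T$.

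For the reverse inequality $\|T\|\le\|\widetilde T\|$, it suffices to know that $\delta$ is an isometry. We obtain this by applying Step 2 to $T=\mathrm{id}$ viewed inside some $C(K)$ or lattice-embedded copy. More simply, for $\pi e$ pick a norming functional $x^*\in (E/F)^*$, take $T=x^*:E/F\to\mathbb R$, and conclude $|x^*(\pi e)|=|\widetilde{x^*}\delta(\pi e)|\le\|\delta(\pi e)\|$. This gives $\|\pi e\|\le\|\delta\pi e\|$.

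\emph{Step 3: uniqueness.} The sublattice generated by $\delta(E/F)=q\delta_E(E)$ is dense in the quotient, because $\delta_E(E)$ generates a dense sublattice of $\FBL[E]$ and $q$ is a continuous surjective lattice homomorphism. Two lattice homomorphisms that agree on $\delta(E/F)$ therefore coincide.

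The only mildly delicate point is Step 2's verification that $\widehat F\subseteq\ker\widehat{T\pi}$ and the isometry of $\delta$. The first is handled by the closed-ideal property of kernels of lattice homomorphisms, and the second by the functional argument above. With these in hand, the universal property yields lattice homomorphisms in both directions between $\FBL[E/F]$ and $\FBL[E]/\widehat F$ that are mutually inverse and contractive, and hence a lattice isometry.
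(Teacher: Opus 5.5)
Your proposal is correct and follows the paper's proof essentially step for step: you push $q\delta_E$ down to $E/F$, extend $T\pi$ to a lattice homomorphism on $\FBL[E]$, factor it through the quotient because $\widehat F\subseteq\ker\widehat{T\pi}$, and get uniqueness from density of the sublattice generated by $\delta(E/F)$. Your explicit check via norming functionals that the induced map $E/F\to\FBL[E]/\widehat F$ is isometric is a detail the paper leaves implicit; note, though, that $\|T\|\le\|\widetilde T\|$ already follows from $T=\widetilde T\delta$ and $\|\delta\|\le 1$, so isometry is needed only to match the definition of $\FBL[E/F]$, not for that inequality.
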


\begin{proof}
Let $q:E\to E/F$ be the quotient map and let $Q:\FBL[E]\to \FBL[E]/\widehat{F}$ be the canonical quotient homomorphism. Since $\delta_E(F)\subseteq \widehat{F}$, the formula
\[
\widetilde{\delta}(q x)=Q(\delta_E x)
\qquad (x\in E)
\]
defines a contractive linear map $\widetilde{\delta}:E/F\to \FBL[E]/\widehat{F}$.

Let $Y$ be a Banach lattice and let $T:E/F\to Y$ be a bounded linear operator. Then $S=Tq:E\to Y$ extends uniquely to a lattice homomorphism $\widehat{S}:\FBL[E]\to Y$ with $\widehat{S}\delta_E=S$ and $\|\widehat{S}\|=\|S\|=\|T\|$. Since $\widehat{S}(\delta_E x)=0$ for every $x\in F$, the ideal $\widehat{F}$ is contained in $\ker \widehat{S}$, so $\widehat{S}$ factors through a lattice homomorphism
\[
R:\FBL[E]/\widehat{F}\to Y
\]
with $RQ=\widehat{S}$. Then
\[
R\widetilde{\delta}(q x)=RQ(\delta_E x)=\widehat{S}\delta_E x=Tq x,
\]
so $R$ extends $T$ and $\|R\|=\|T\|$. Uniqueness follows because $\widetilde{\delta}(E/F)$ generates a dense sublattice of $\FBL[E]/\widehat{F}$.
\end{proof}

\begin{corollary}\label{cor:freeprod-quotients}
Let $(X_i)_{i\in I}$ be a family of Banach lattices and let $J_i\subseteq X_i$ be closed ideals. Then
\[
\bigfree_{i\in I}(X_i/J_i)\simeq
\Big(\bigfree_{i\in I}X_i\Big)\Big/ J
\]
lattice isometrically, where $J$ is the closed ideal of $\bigfree_{i\in I}X_i$ generated by the images of the ideals $J_i$.
\end{corollary}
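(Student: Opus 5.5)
The plan is to prove the statement directly from the universal property rather than through the $\FBL$ representation, since both sides are defined by universal properties. Write $X=\bigfree_{i\in I}X_i$ with canonical embeddings $\varphi_i$, let $J$ be the closed ideal of $X$ generated by $\bigcup_{i}\varphi_i(J_i)$, and let $Q:X\to X/J$ be the quotient map. For each $i$ let $q_i:X_i\to X_i/J_i$ be the quotient map. Since $Q\varphi_i$ vanishes on $J_i$, it factors as $Q\varphi_i=\widetilde\varphi_i q_i$ for a unique lattice homomorphism $\widetilde\varphi_i:X_i/J_i\to X/J$. Moreover $\|\widetilde\varphi_i\|\le 1$, because $q_i$ maps the open unit ball of $X_i$ onto the open unit ball of $X_i/J_i$ and $\|Q\varphi_i\|\le 1$. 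I would then show that $(X/J,(\widetilde\varphi_i)_{i\in I})$ satisfies the defining property of the free product of $(X_i/J_i)_{i\in I}$; the remark on uniqueness of free products then gives the lattice isometry.

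\textbf{Existence.} Let $T_i:X_i/J_i\to Y$ be lattice homomorphisms with $M=\sup_i\|T_i\|<\infty$. The compositions $T_iq_i:X_i\to Y$ are lattice homomorphisms with $\|T_iq_i\|\le M$. By the universal property of $X$ there is a lattice homomorphism $S:X\to Y$ with $S\varphi_i=T_iq_i$ and $\|S\|\le M$. For each $i$ we have $S(\varphi_i(J_i))=T_iq_i(J_i)=0$. Since $\ker S$ is a closed ideal (because $S$ is a lattice homomorphism), it contains $J$. Hence $S$ factors as $S=TQ$, where $T:X/J\to Y$ is a lattice homomorphism with $\|T\|=\|S\|\le M$; the norm equality again comes from the fact that $Q$ maps open balls onto open balls. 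Then
\[
T\widetilde\varphi_i q_i=TQ\varphi_i=S\varphi_i=T_iq_i,
\]
and surjectivity of $q_i$ gives $T\widetilde\varphi_i=T_i$.

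\textbf{Uniqueness.} Suppose $T$ and $R$ are two such homomorphisms. Then $TQ$ and $RQ$ agree on each $\varphi_i(X_i)$. By Proposition~\ref{prop:dense-sublattice} they are therefore equal on all of $X$, and since $Q$ is surjective, $T=R$.

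This yields a lattice isometric isomorphism $\bigfree_{i}(X_i/J_i)\to X/J$ that intertwines the canonical maps. There is no real obstacle here. The only points requiring care are that quotient maps by closed ideals are lattice homomorphisms sending open unit balls onto open unit balls, which gives the norm bounds, and that a closed ideal containing the generators $\varphi_i(J_i)$ contains $J$. Lemma~\ref{lem:FBL-quotient} suggests an alternative route through the concrete construction $\FBL[Z_{\mathbf X}]/J_{\mathbf X}$, but the universal-property argument above avoids that bookkeeping.
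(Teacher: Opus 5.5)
Your argument is correct, and it takes a genuinely different route from the paper.

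The paper works inside the concrete model $\FBL[Z_{\mathbf X}]/J_{\mathbf X}$. It sets $J_0=(\bigoplus_{i\in I}J_i)_{\ell_1}\subseteq Z_{\mathbf X}$ and uses Lemma~\ref{lem:FBL-quotient} to obtain $\FBL[Z_{\mathbf Y}]\simeq\FBL[Z_{\mathbf X}]/\widehat{J_0}$ for $Y_i=X_i/J_i$. It then identifies $J_{\mathbf Y}$ with $(J_{\mathbf X}+\widehat{J_0})/\widehat{J_0}$, and concludes by the third isomorphism theorem with $J=(J_{\mathbf X}+\widehat{J_0})/J_{\mathbf X}$.

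You instead check directly that $X/J$, with the induced maps $\widetilde\varphi_i$, satisfies the defining universal property, and then invoke uniqueness of free products. All your steps hold: the factorizations through $q_i$ and $Q$ are lattice homomorphisms with the stated norm bounds, $\ker S$ is a closed ideal containing $J$, and uniqueness follows from Proposition~\ref{prop:dense-sublattice} together with surjectivity of $Q$.

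Your route avoids bookkeeping that the paper passes over quickly. The paper needs the image of $J_{\mathbf X}$ in $\FBL[Z_{\mathbf X}]/\widehat{J_0}$ to be exactly the closed ideal $J_{\mathbf Y}$, which requires $J_{\mathbf X}+\widehat{J_0}$ to be closed. It also needs $(J_{\mathbf X}+\widehat{J_0})/J_{\mathbf X}$ to be the closed ideal generated by the $\varphi_i(J_i)$. Moreover, since your isometry $\Theta$ intertwines the canonical maps, density immediately gives $\Theta\circ\overline{\bigfree}_{i\in I}q_i=Q$. That is precisely the form in which the corollary is used in the next proposition on quotient maps.

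The paper's route, in exchange, stays within the $\FBL$ representation of free products and describes $J$ explicitly there, reusing Lemma~\ref{lem:FBL-quotient}.
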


\begin{proof}
Let
\[
Z_{\bold{X}}=\Big(\bigoplus_{i\in I}X_i\Big)_{\ell_1},
\qquad
J_0=\Big(\bigoplus_{i\in I}J_i\Big)_{\ell_1}\subseteq Z_{\bold{X}}.
\]
Then $Z_{\bold{X}}/J_0$ is canonically isometric to $\big(\bigoplus_{i\in I}(X_i/J_i)\big)_{\ell_1}$. Let $\widehat{J_0}$ be the closed ideal of $\FBL[Z_{\bold{X}}]$ generated by $\delta_X(J_0)$. By Lemma~\ref{lem:FBL-quotient},
\[
\FBL[Z_{\bold{X}}/J_0]\simeq \FBL[Z_{\bold{X}}]/\widehat{J_0}.
\]

Let us call $Y_i=X_i/J_i$ and $\bold{Y}=(Y_i)_{i\in I}$. With this notation in mind, \[
\FBL[Z_{\bold Y}]\simeq \FBL[Z_{\bold X}]/\widehat {J_0},\qquad  J_{\bold Y}\simeq(J_{\bold{X}}+\widehat{J_0})/\widehat{J_0}.
\] Therefore
\begin{align*}
\bigfree_{i\in I}(X_i/J_i)
&\simeq \FBL[Z_{\bold Y}]/J_{\bold Y} \\
&\simeq (\FBL[Z_{\bold{X}}]/\widehat{J_0})/\big((J_\bold{X}+\widehat{J_0})/\widehat{J_0}\big) \\
&\simeq (\FBL[Z_{\bold{X}}]/J_{\bold{X}})\big/\big((J_{\bold{X}}+\widehat{J_0})/J_{\bold{X}}\big).
\end{align*}
Since $\FBL[Z_{\bold{X}}]/J_{\bold{X}}\simeq \bigfree_{i\in I}X_i$, the conclusion follows with
\[
J=(J_{\bold{X}}+\widehat{J_0})/J_{\bold{X}},
\]
which is exactly the closed ideal generated by the canonical images of the $J_i$.
\end{proof}
\begin{proposition}
Let $T_i:X_i\to Y_i$ be lattice homomorphisms. Then
\[
\overline{\bigfree}_{i\in I}T_i:\bigfree_{i\in I}X_i\to \bigfree_{i\in I}Y_i
\]
is a quotient map if and only if each $T_i$ is a quotient map.
\end{proposition}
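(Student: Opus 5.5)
We take \emph{quotient map} to mean a surjective lattice homomorphism $T:X\to Y$ that maps the open unit ball of $X$ onto the open unit ball of $Y$. Equivalently, $T$ induces a lattice isometry $X/\ker T\simeq Y$. In particular quotient maps are contractive, so in either direction the family $(T_i)$ is uniformly bounded and $S:=\overline{\bigfree}_{i\in I}T_i$ is well defined. The plan is to prove the two implications separately: the forward one with the projections from the proof of Proposition~\ref{prop:surjective-factors}, the reverse one with Corollary~\ref{cor:freeprod-quotients}.

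\emph{$S$ a quotient map $\Rightarrow$ each $T_i$ a quotient map.} As in the proof of Proposition~\ref{prop:surjective-factors}, let $U_i$ and $V_i$ be the free product homomorphisms that are the identity on the $i$th factor and zero elsewhere. These are contractive, and by density (Proposition~\ref{prop:dense-sublattice})
\[
\psi_i V_i S=\psi_i T_i U_i .
\]
Each $T_i=V_iS\varphi_i$ is then contractive. Now fix $y\in Y_i$ with $\|y\|<1$. By Remark~\ref{rem:isometric-copies}, $\|\psi_i y\|<1$. Since $S$ is a quotient map, there is $x$ with $Sx=\psi_i y$ and $\|x\|<1$. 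Applying the identity above and using injectivity of $\psi_i$ gives $T_i(U_ix)=y$, with $\|U_ix\|\le\|x\|<1$. Hence $T_i$ maps the open unit ball onto the open unit ball.

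\emph{Each $T_i$ a quotient map $\Rightarrow$ $S$ a quotient map.} Put $J_i=\ker T_i$, a closed ideal, and let $\widetilde T_i:X_i/J_i\to Y_i$ be the induced lattice isometric isomorphisms. Corollary~\ref{cor:freeprod-quotients} gives a lattice isometry
\[
\Theta:\bigfree_{i\in I}(X_i/J_i)\to\Big(\bigfree_{i\in I}X_i\Big)\Big/J ,
\]
where $J$ is the closed ideal generated by the $\varphi_i(J_i)$. Let $Q$ be the quotient map onto $\big(\bigfree_{i\in I} X_i\big)/J$. The isometric isomorphisms $\widetilde T_i$ induce, by uniqueness of free products, a lattice isometry
\[
\Omega:=\overline{\bigfree}_{i\in I}\widetilde T_i:\bigfree_{i\in I}(X_i/J_i)\to\bigfree_{i\in I}Y_i ,
\]
whose inverse is $\overline{\bigfree}_{i\in I}\widetilde T_i^{-1}$. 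I then claim that
\[
S=\Omega\,\Theta^{-1}Q .
\]
Both sides are lattice homomorphisms, and on $\varphi_i(x)$ both give $\psi_i T_i x$, provided $\Theta$ sends the canonical copy of $x+J_i$ to $Q\varphi_i x$. This holds by construction of the isomorphism in the corollary, since there the canonical inclusions are $q\delta\iota_i$ composed with the quotients. Uniqueness in the universal property then gives equality. As a composition of a quotient map with isometric isomorphisms, $S$ is a quotient map.

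The main obstacle is this last bookkeeping step. One must check that the isometry supplied by Corollary~\ref{cor:freeprod-quotients} is compatible with the canonical embeddings, so that the composite really agrees with $S$ on each factor. If that compatibility were unclear from the corollary's proof, I would instead verify directly that $(\bigfree X_i)/J$, with the maps $Q\varphi_i$ (transported by $\widetilde T_i^{-1}$), satisfies the universal property of $\bigfree Y_i$. A lattice homomorphism $\bigfree X_i\to Z$ that vanishes on each $\varphi_i(J_i)$ factors through $Q$ with the same norm. Uniqueness of free products then yields an isometry that by design respects the embeddings.
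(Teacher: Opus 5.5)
Your proof is correct and follows the paper's own argument. For the implication from $\overline{\bigfree}_{i\in I}T_i$ to the $T_i$, the paper uses the same identity $P_i^Y\,\overline{\bigfree}_{j\in I}T_j=\psi_iT_iU_i$, phrased via density of the images of closed unit balls rather than your exact lifting of open balls; for the other implication it composes the quotient map from Corollary~\ref{cor:freeprod-quotients} with the induced isomorphism $\bigfree_{i\in I}(X_i/J_i)\simeq\bigfree_{i\in I}Y_i$, and your check that this isometry respects the canonical embeddings (or your fallback of verifying the universal property of $(\bigfree_{i\in I}X_i)/J$ directly) makes explicit a step the paper leaves implicit.
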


\begin{proof}
Assume first that each $T_i$ is a quotient map. Let $J_i=\ker T_i$ and let $q_i:X_i\to X_i/J_i$ be the quotient map. Then $T_i$ factors as an isometric lattice isomorphism from $X_i/J_i$ onto $Y_i$ composed with $q_i$. By Corollary~\ref{cor:freeprod-quotients}, the induced map
\[
\overline{\bigfree}_{i\in I}q_i:\bigfree_{i\in I}X_i\to \bigfree_{i\in I}(X_i/J_i)
\]
is the canonical quotient map by the closed ideal generated by the copies of the $J_i$. Composing with the induced lattice isomorphism
\[
\bigfree_{i\in I}(X_i/J_i)\simeq \bigfree_{i\in I}Y_i
\]
shows that $\overline{\bigfree}_{i\in I}T_i$ is a quotient map.

Conversely, suppose that $\overline{\bigfree}_{i\in I}T_i$ is a quotient map. Fix $i\in I$, and let
\[
P_i^X:\bigfree_{j\in I}X_j\to \varphi_i(X_i),
\qquad
P_i^Y:\bigfree_{j\in I}Y_j\to \psi_i(Y_i)
\]
be the canonical contractive lattice projections. Then
\[
P_i^Y\overline{\bigfree}_{j\in I}T_j=\psi_i T_i U_i
\]
on the dense sublattice generated by the factors, hence everywhere. Since $\overline{\bigfree}_{j\in I}T_j(B_{\bigfree X_j})$ is dense in $B_{\bigfree Y_j}$, applying $P_i^Y$ shows that $\psi_i(T_i(B_{X_i}))$ is dense in $\psi_i(B_{Y_i})$. Because $\psi_i$ is an isometric embedding, $\overline{T_i(B_{X_i})}=B_{Y_i}$, so $T_i$ is a quotient map.
\end{proof}

\section{Stability under free products}\label{sec:stability}

In this section, we explore which properties of Banach lattices are inherited by their free product.

\begin{definition}
A Banach lattice $X$ is said to be \emph{projective} if for every Banach lattice $Z$, every closed ideal $J\subseteq Z$, every lattice homomorphism $T:X\to Z/J$, and every $\varepsilon>0$, there exists a lattice homomorphism $\widetilde{T}:X\to Z$ such that $q\widetilde{T}=T$ and
\[
\|\widetilde{T}\|\leq (1+\varepsilon)\|T\|,
\]
where $q:Z\to Z/J$ denotes the quotient map (see \cite{projfree}).
\end{definition}
The notion of projective Banach lattice was first introduced in \cite{freebanlatlatproj} 

\begin{proposition}\label{prop:projective}
The free product $\bigfree_{i\in I}X_i$ is projective if and only if every $X_i$ is projective.
\end{proposition}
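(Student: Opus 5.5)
The plan is to prove the two implications separately, using only the universal property, the contractive lattice projections of Proposition~\ref{prop:complemented}, and the fact that projectivity is inherited by lattice retracts.

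\emph{Projectivity of the factors implies projectivity of the free product.} Let $X=\bigfree_{i\in I}X_i$, let $q:Z\to Z/J$ be a quotient map, let $T:X\to Z/J$ be a lattice homomorphism, and fix $\varepsilon>0$. For each $i$ the map $T\varphi_i:X_i\to Z/J$ is a lattice homomorphism with $\|T\varphi_i\|\leq\|T\|$. Projectivity of $X_i$ gives a lift $\widetilde{T}_i:X_i\to Z$ with $q\widetilde{T}_i=T\varphi_i$ and $\|\widetilde{T}_i\|\leq(1+\varepsilon)\|T\varphi_i\|\leq(1+\varepsilon)\|T\|$. The family $(\widetilde{T}_i)$ is therefore uniformly bounded, so the universal property yields a lattice homomorphism $\widetilde{T}=\bigfree_{i\in I}\widetilde{T}_i:X\to Z$ with $\|\widetilde{T}\|\leq(1+\varepsilon)\|T\|$. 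Both $q\widetilde{T}$ and $T$ are lattice homomorphisms $X\to Z/J$ that agree with $T\varphi_i$ on each factor. By the uniqueness clause of the free product (equivalently, by Proposition~\ref{prop:dense-sublattice}) they coincide. The point to watch is that the bound $(1+\varepsilon)\|T\|$ is uniform in $i$; this is exactly why the definition of free product only requires $\sup_i\|T_i\|<\infty$.

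\emph{Projectivity of the free product implies projectivity of each factor.} Fix $i$ and let $U_i:X\to X_i$ be the contractive lattice homomorphism with $U_i\varphi_i=\mathrm{id}_{X_i}$ and $U_i\varphi_j=0$ for $j\neq i$, as in Remark~\ref{rem:isometric-copies}. Given a lattice homomorphism $S:X_i\to Z/J$, the composition $SU_i:X\to Z/J$ is a lattice homomorphism with $\|SU_i\|\leq\|S\|$. Projectivity of $X$ gives a lattice homomorphism $R:X\to Z$ with $qR=SU_i$ and $\|R\|\leq(1+\varepsilon)\|S\|$. Set $\widetilde{S}=R\varphi_i$. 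Then $q\widetilde{S}=SU_i\varphi_i=S$, and $\|\widetilde{S}\|\leq(1+\varepsilon)\|S\|$ because $\varphi_i$ is contractive.

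Neither direction has a genuine obstacle; the argument is the usual one showing that a retract of a projective object is projective, combined with the coproduct property. The only delicate points are the uniform norm control in the first direction and the fact that uniqueness holds for maps into quotients, and both are covered by the definition and by Proposition~\ref{prop:dense-sublattice}.
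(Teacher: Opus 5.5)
Your proposal is correct and follows essentially the same argument as the paper. In the forward direction you lift each $T\varphi_i$ and glue the lifts via the universal property with a uniform bound, and in the converse your map $SU_i$ is exactly the free product homomorphism of $S$ with the zero maps, which the paper lifts and then composes with $\varphi_{i}$.
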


\begin{proof}
Assume first that every $X_i$ is projective. Let $q:Z\to Z/J$ be a quotient map of Banach lattices, let $\varepsilon>0$, and let
\[
T:\bigfree_{i\in I}X_i\to Z/J
\]
be a lattice homomorphism. For each $i\in I$, the map $T_i=T\varphi_i:X_i\to Z/J$ lifts to a lattice homomorphism $\widetilde{T}_i:X_i\to Z$ such that
\[
\|\widetilde{T}_i\|\leq (1+\varepsilon)\|T_i\|\leq (1+\varepsilon)\|T\|.
\]
By the universal property of the free product, there exists a lattice homomorphism
\[
\widetilde{T}:\bigfree_{i\in I}X_i\to Z
\]
with $\widetilde{T}\varphi_i=\widetilde{T}_i$ for all $i\in I$. Then
\[
q\widetilde{T}\varphi_i=q\widetilde{T}_i=T_i=T\varphi_i
\]
for all $i\in I$, so $q\widetilde{T}=T$ by uniqueness. Moreover,
\[
\|\widetilde{T}\|=\sup_{i\in I}\|\widetilde{T}_i\|
\leq (1+\varepsilon)\|T\|.
\]

Conversely, assume that $\bigfree_{i\in I}X_i$ is projective. Fix $i_0\in I$, let $q:Z\to Z/J$ be a quotient map, and let $S:X_{i_0}\to Z/J$ be a lattice homomorphism. Define $T_i=0$ for $i\neq i_0$ and $T_{i_0}=S$. Let
\[
T:\bigfree_{i\in I}X_i\to Z/J
\]
be the corresponding free product homomorphism. By projectivity, for every $\varepsilon>0$ there exists a lift $\widetilde{T}:\bigfree_{i\in I}X_i\to Z$ with $\|\widetilde{T}\|\leq (1+\varepsilon)\|T\|$. Then $\widetilde{T}\varphi_{i_0}$ is a lift of $S$, and
\[
\|\widetilde{T}\varphi_{i_0}\|\leq \|\widetilde{T}\|\leq (1+\varepsilon)\|T\|=(1+\varepsilon)\|S\|.
\]
Thus $X_{i_0}$ is projective.
\end{proof}

We need the following well-known fact, which we only include for convenience of the reader.

\begin{lemma}\label{lem:comp-l1}
Let $E$ and $F$ be Banach spaces. If $\ell_1$ is complemented in $E\oplus F$, then $\ell_1$ is complemented in either $E$ or $F$.
\end{lemma}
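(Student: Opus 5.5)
Suppose $\ell_1$ is complemented in $E\oplus F$. Then we can fix an isomorphic embedding $J:\ell_1\to E\oplus F$ and a bounded projection $P$ of $E\oplus F$ onto $J(\ell_1)$. Let $P_E,P_F$ be the coordinate projections, so $P_E+P_F=\mathrm{id}$, and write $u_n=J e_n$, where $(e_n)$ is the unit vector basis of $\ell_1$. The strategy is to pass to a subsequence on which one of the two components, say $P_E u_n$, still spans a complemented copy of $\ell_1$. This is the classical argument via Pełczyński's characterization: the sequence $(P_E u_n)$ or $(P_F u_n)$ must contain a subsequence equivalent to the $\ell_1$ basis whose span is complemented.

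Concretely, I would apply Rosenthal's $\ell_1$ theorem to the bounded sequence $(P_F u_n)$. Either some subsequence of $(P_F u_n)$ is equivalent to the $\ell_1$ basis, or some subsequence is weakly Cauchy.

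\emph{Weakly Cauchy case.} Replace that subsequence by differences $v_k=u_{n_{2k}}-u_{n_{2k+1}}$. These still span a complemented copy of $\ell_1$ in $E\oplus F$: the block differences $e_{n_{2k}}-e_{n_{2k+1}}$ are equivalent to the $\ell_1$ basis and span a $1$-complemented subspace of $\ell_1$. Their $F$-components $P_F v_k$ are weakly null. Let $Q$ be the composition of $P$ with $J^{-1}$ and with a projection onto $[v_k]$, followed by the coefficient functionals. Then $x\mapsto Q(P_E x)$ restricted to $[P_E v_k]$ should give the projection in $E$.

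The main obstacle is making this precise: we need $Q P_F v_k$ to be small in the right sense. Since $P_F v_k$ is weakly null and the coefficient functionals $v_k^*\circ Q$ are bounded, I would use a diagonal/perturbation argument. Passing to a further subsequence, arrange $|v_j^*(QP_Fv_k)|$ small for $j\ne k$ (a gliding-hump argument using weak nullity). For the diagonal terms $v_k^*(QP_Fv_k)$, pass to a subsequence where they converge to some $c$. Then either $c\ne 1$, and $QP_E$ restricted to $[v_k]$ is close to $(1-c)\mathrm{id}$, so $P_E$ is an isomorphism on $[v_k]$ with complemented range; or $c=1$, so $QP_F$ is close to the identity and $F$ contains a complemented $\ell_1$ spanned by $(P_F v_k)$. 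The latter contradicts weak nullity, since a weakly null sequence cannot be equivalent to the $\ell_1$ basis. In either surviving subcase, a standard small-perturbation lemma yields that $\ell_1$ is complemented in $E$ or in $F$.

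\emph{$\ell_1$ case.} If $(P_F u_{n_k})$ is equivalent to the $\ell_1$ basis, argue symmetrically. The operator $T=QP_F$ on $\ell_1\cong[u_{n_k}]$ satisfies $T+(\mathrm{id}-T)=\mathrm{id}$. Using that $\ell_1$ is prime, or more elementarily that either $T$ or $\mathrm{id}-T$ fixes a complemented copy of $\ell_1$ (by the standard fact that for an operator on $\ell_1$, either it or its complement preserves some complemented copy of $\ell_1$, derived from the Pełczyński decomposition and primariness of $\ell_1$), we get that $P_F$ or $P_E$ restricted to a complemented copy of $\ell_1$ is an isomorphism whose range is complemented. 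Since this step relies on the primariness of $\ell_1$, I would simply cite primariness of $\ell_1$ (Pełczyński) as the cleanest route for the whole lemma, and use the perturbation argument above only as a fallback.
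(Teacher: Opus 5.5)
\paragraph{The gap is in your $\ell_1$ case.}
Your weakly Cauchy case is essentially fine, and it is shorter than you make it. $QP_Fv_k$ is weakly null in $\ell_1$, hence norm null by the Schur property, so the limit $c$ of the diagonal terms is automatically $0$.

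In the other case, knowing that a subsequence of $(P_Fu_n)$ is equivalent to the $\ell_1$ basis says nothing about complementation in $F$. Arguing symmetrically only leaves you where both $(P_Eu_n)$ and $(P_Fu_n)$ have $\ell_1$-subsequences. The diagonal embedding $x\mapsto(x,x)$ of $\ell_1$ into $\ell_1\oplus\ell_1$ is exactly of this type, so Rosenthal's dichotomy in $F$ gains nothing there.

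Everything then rests on the claim that, with $Q=J^{-1}P$ and $T=QP_FJ$, either $T$ or $I-T=QP_EJ$ is an isomorphism on a copy of $\ell_1$ with complemented range. Your justification does not support this. Primariness of $\ell_1$, even combined with the decomposition method, concerns splittings $\ell_1\simeq X\oplus Y$, i.e.\ decompositions of the identity into complementary \emph{projections}. Here $I=QP_EJ+QP_FJ$ is a sum of two arbitrary operators, and $J(\ell_1)$ need not meet $E$ or $F$ at all: in the diagonal example both intersections are $\{0\}$. So citing primariness ``for the whole lemma'' is not a proof.

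\paragraph{How to close it.}
The claim is true for an elementary reason, and you can prove it with your own tools by running the dichotomy inside $\ell_1$ rather than in $F$.
\begin{enumerate}
\item Pass to a subsequence on which $(Te_n)$ converges coordinatewise, and put $w_k=e_{n_{2k}}-e_{n_{2k+1}}$, so that $Tw_k\to 0$ coordinatewise.
\item If $\liminf_k\|Tw_k\|=0$, your perturbation argument shows that $QP_EJ$ is an isomorphism on some $[w_{k_i}]\simeq\ell_1$ with complemented range.
\item Otherwise, a gliding hump makes a subsequence of $(Tw_k)$ a small perturbation of a seminormalized block basis of $\ell_1$. Then $T$ itself is an isomorphism on some $[w_{k_i}]$ with complemented range.
\item To transfer back, suppose $A=QP_EJ|_Z$ is an isomorphism onto a subspace complemented by a projection $R$. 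Then $P_EJ|_Z\,A^{-1}RQ|_E$ is a projection of $E$ onto $P_EJ(Z)\simeq\ell_1$, and likewise for $F$.
\end{enumerate}
This gives a direct primal proof with no need for Rosenthal's theorem.

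\paragraph{The paper's route.}
The paper argues by duality instead. By Bessaga--Pe\l czy\'nski, $X$ contains a complemented $\ell_1$ iff $X^*$ contains $c_0$, iff $X^*$ carries a weakly unconditionally Cauchy series that is not unconditionally convergent. The two coordinate series of such a series in $E^*\oplus F^*$ are weakly unconditionally Cauchy, and at least one of them is not unconditionally convergent.
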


\begin{proof}
By \cite[Theorem~10]{diestel}, the space $E\oplus F$ contains a complemented copy of $\ell_1$ if and only if $(E\oplus F)^*$ contains a copy of $c_0$. By \cite[Theorem~2.4.11]{albiackalton}, this is equivalent to the existence of a weakly unconditionally Cauchy series in $(E\oplus F)^*$ that is not unconditionally convergent. Write such a series as
\[
\sum_{n=1}^\infty (x_n^*+y_n^*),
\qquad x_n^*\in E^*,\ y_n^*\in F^*.
\]
Projecting onto the two coordinates and using \cite[Proposition~2.4.7]{albiackalton}, both $\sum x_n^*$ and $\sum y_n^*$ are weakly unconditionally Cauchy. At least one of them fails to be unconditionally convergent, since otherwise their sum would be unconditionally convergent as well. Applying \cite[Theorem~2.4.11]{albiackalton} again, either $E^*$ or $F^*$ contains a copy of $c_0$, and therefore either $E$ or $F$ contains a complemented copy of $\ell_1$.
\end{proof}

\begin{proposition}
If $X$ and $Y$ are Banach lattices with order continuous duals, then the dual of $X\ast Y$ is order continuous.
\end{proposition}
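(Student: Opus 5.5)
The plan is to translate order continuity of the dual into a statement about copies of $\ell_1$, and then use the universal property together with Lemma~\ref{lem:comp-l1}. I will use the standard characterization (see \cite[2.4.14]{nieberg}): for a Banach lattice $E$, the dual $E^*$ is order continuous if and only if $E$ contains no sublattice isomorphic to $\ell_1$. Such a sublattice is automatically complemented, since a lattice copy of $\ell_1$ is the range of a positive projection. Equivalently, $E^*$ is order continuous if and only if $\ell_1$ is not complemented in $E$ via a lattice copy. I will also use the dual reformulation: $E^*$ fails to be order continuous if and only if there is a lattice homomorphism of $E$ onto a lattice copy of $\ell_1$. So assume, for a contradiction, that $(X\ast Y)^*$ is not order continuous. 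Fix a normalized disjoint sequence $(u_n)$ in $X\ast Y$ spanning a sublattice lattice isomorphic to $\ell_1$, and a positive projection $P$ onto it. Composing gives a lattice homomorphism $T:X\ast Y\to\ell_1$ that is onto and satisfies $T(u_n)=e_n$.

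Next I restrict to the factors. Put $T_1=T\varphi_1:X\to\ell_1$ and $T_2=T\varphi_2:Y\to\ell_1$. These are lattice homomorphisms, and by uniqueness in the universal property $T=T_1\ast T_2$. Since $X^*$ and $Y^*$ are order continuous, neither $X$ nor $Y$ has a lattice quotient isomorphic to $\ell_1$. The aim is to show that this prevents $T$ from being surjective.

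Here is the key step. By Proposition~\ref{prop:dense-sublattice}, the range of $T$ is contained in the closed sublattice of $\ell_1$ generated by $T_1(X)\cup T_2(Y)$. In $\ell_1$ the lattice operations are coordinatewise. I would therefore pass to the linear map
\[
S:X\oplus_1 Y\to \ell_1\oplus_1\ell_1,\qquad S(x,y)=(T_1x,T_2y),
\]
and use Proposition~\ref{prop:complemented}, which identifies $X\oplus_1 Y$ with a complemented subspace of $X\ast Y$. I then want to argue as follows. If $T$ were onto $\ell_1$, then, since $\ell_1$ is projective as a Banach space, the copy of $\ell_1$ would lift to a complemented copy of $\ell_1$ inside the span of $\varphi_1(X)\cup\varphi_2(Y)$, i.e.\ inside $X\oplus_1 Y$. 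Lemma~\ref{lem:comp-l1} would then place a complemented copy of $\ell_1$ in $X$ or in $Y$.

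This lifting is the step I expect to be the main obstacle, and the argument above does not yet justify it. A further gap remains even if it succeeds: Lemma~\ref{lem:comp-l1} produces only a complemented \emph{linear} copy of $\ell_1$ in $X$ or $Y$. That need not contradict order continuity of $X^*$ or $Y^*$, which rules out \emph{lattice} copies. For example, $C[0,1]$ has an AL-space, hence order continuous, dual, yet contains complemented linear copies of $\ell_1$. So the plan needs a lattice-level replacement for the lemma: a lattice homomorphism of $X\ast Y$ onto a lattice copy of $\ell_1$ should force one from $X$ or from $Y$.

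What I would try first is a disjointness argument at the level of unit balls, using Proposition~\ref{prop:unit-ball}. Each $u_n$ can be approximated by elements of the solid convex hull of $\varphi_1(B_X)\cup\varphi_2(B_Y)$, so $|u_n|\le\varphi_1(a_n)+\varphi_2(b_n)$ up to small error, with $a_n\ge0$, $b_n\ge0$ and $\|a_n\|+\|b_n\|\le C$. Then $1=\|T u_n\|\le\|T_1a_n\|+\|T_2b_n\|$. Passing to a subsequence, one of the two sequences, say $(T_1a_n)$, stays bounded below on a set of indices. From this I would try to extract disjoint pieces of $(a_n)$ whose images under $T_1$ still behave like the unit vector basis of $\ell_1$; this should give a lattice copy of $\ell_1$ in $X$ and contradict order continuity of $X^*$. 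Making this extraction of disjoint pieces work is exactly where I expect the real difficulty to lie.
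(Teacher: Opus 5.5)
Your proposal does not prove the statement, and its first reduction is false. A lattice copy of $\ell_1$ in $E$ is complemented by a positive projection, but that projection is in general not a lattice homomorphism. The ``dual reformulation'' you invoke also fails. $L_1[0,1]$ has non-order-continuous dual $L_\infty[0,1]$, yet it admits no non-zero lattice homomorphism into $\mathbb{R}$: for $\phi(f)=\int fg$ with $0\neq g\in L_\infty[0,1]_+$, split a set where $g$ is bounded below into disjoint $A_1,A_2$ of positive measure, and then $\phi(\one_{A_1})\wedge\phi(\one_{A_2})>0=\phi(\one_{A_1}\wedge\one_{A_2})$. Hence $L_1[0,1]$ admits no non-zero lattice homomorphism into $\ell_1$ either. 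So the surjective lattice homomorphism $T\colon X\ast Y\to\ell_1$ need not exist, and the factorization $T=T_1\ast T_2$ on which the rest of your argument rests is unavailable. The lifting step you flag is also aimed the wrong way. The range of $T$ lies in the closed \emph{sublattice} generated by $T_1(X)\cup T_2(Y)$, not in the closed span, so nothing forces a copy of $\ell_1$ into the span of $\varphi_1(X)\cup\varphi_2(Y)$. Finally, the disjoint-extraction idea at the end is left unexecuted.

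The missing idea is to lift the \emph{embedding}, not a quotient, through the presentation in Theorem~\ref{thm:existence}. Let $j\colon\ell_1\to X\ast Y$ be a lattice embedding and $q$ the quotient map from $\FBL[X\oplus_1 Y]$ onto $X\ast Y$. Lattice projectivity of $\ell_1$ \cite[Theorem~11.11]{projfree} gives a lattice homomorphism $\widetilde{j}\colon\ell_1\to\FBL[X\oplus_1 Y]$ with $q\widetilde{j}=j$. Since $j$ is bounded below, so is $\widetilde{j}$, so $\FBL[X\oplus_1 Y]$ contains a lattice copy of $\ell_1$. By \cite[Theorem~9.20]{freebanlat} this forces a complemented copy of $\ell_1$ in $X\oplus_1 Y$, and Lemma~\ref{lem:comp-l1} places one in $X$ or in $Y$. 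This is the paper's argument.

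No lattice-level version of the lemma is needed; your worry about linear versus lattice copies rests on a false example. $C[0,1]$ contains no complemented copy of $\ell_1$. The adjoint of a projection onto such a copy would embed $\ell_\infty$, hence $c_0$, into the AL-space $C[0,1]^*$, which is weakly sequentially complete. The same adjoint argument works in any Banach lattice $E$: a complemented linear copy of $\ell_1$ puts $c_0$ into $E^*$. A dual Banach lattice is monotonically complete, so if $E^*$ were order continuous it would be a KB-space and could not contain $c_0$. Thus, for Banach lattices, order continuity of the dual is equivalent to the absence of complemented linear copies of $\ell_1$. That is exactly why the purely linear Lemma~\ref{lem:comp-l1} suffices.
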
 

\begin{proof}
By \cite[Theorem~4.69]{burkinshaw}, a Banach lattice has order continuous dual if and only if it does not contain a lattice copy of $\ell_1$. Hence $X$ and $Y$ do not contain complemented linear copies of $\ell_1$. By Lemma~\ref{lem:comp-l1}, the Banach space $X\oplus Y$ does not contain a complemented copy of $\ell_1$ either. Using \cite[Theorem~9.20]{freebanlat}, it follows that $\FBL[X\oplus Y]$ contains no lattice copy of $\ell_1$.

Suppose now that $(X\ast Y)^*$ were not order continuous. Then $X\ast Y$ would contain a lattice copy of $\ell_1$. Since $\ell_1$ is projective as a Banach lattice by \cite[Theorem~11.11]{projfree}, such a copy would lift through the quotient map from $\FBL[X\oplus Y]\simeq \FBL[X]\ast \FBL[Y]$ onto $X\ast Y$, yielding a lattice copy of $\ell_1$ inside $\FBL[X\oplus Y]$, a contradiction.
\end{proof}

The previous proposition fails for infinite free products. Indeed, Proposition~\ref{prop:complemented} shows that $\bigfree_{n=1}^\infty\mathbb{R}$ contains a complemented copy of $\ell_1$, and therefore \cite[Theorem~4.69]{burkinshaw} implies that it also contains a lattice copy of $\ell_1$.

If $(E_i)_{i\in I}$ is a family of Banach spaces, then the free product of the free Banach lattices $\FBL[E_i]$ is again a free Banach lattice, namely, the one generated by the ``free product'' of $(E_i)_{i\in I}$ in the Banach space category.

\begin{proposition}\label{prop:fbl-preserves}
Let $(E_i)_{i\in I}$ be a family of Banach spaces. Then
\[
\FBL\Big[\Big(\bigoplus_{i\in I}E_i\Big)_{\ell_1}\Big]\simeq \bigfree_{i\in I}\FBL[E_i]
\]
lattice isometrically.
\end{proposition}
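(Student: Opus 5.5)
We verify directly that $\FBL\big[(\bigoplus_{i\in I}E_i)_{\ell_1}\big]$, with suitable canonical maps, satisfies the universal property in the Definition of free product for the family $(\FBL[E_i])_{i\in I}$. The isometric isomorphism then follows from the uniqueness Remark after that definition. Write $E=(\bigoplus_{i\in I}E_i)_{\ell_1}$ and let $\iota_i:E_i\to E$ be the canonical isometric inclusions.

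\textbf{Canonical maps.} By functoriality of $\FBL[\cdot]$, each $\iota_i$ induces a lattice homomorphism $\overline{\iota_i}:\FBL[E_i]\to\FBL[E]$ with $\overline{\iota_i}\,\delta_{E_i}=\delta_E\iota_i$. Since $\overline{\iota_i}$ is the unique lattice extension of $\delta_E\iota_i$, which has norm at most $1$, we get $\|\overline{\iota_i}\|\le 1$. We take $\varphi_i=\overline{\iota_i}$.

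\textbf{Existence of extensions.} Let $Y$ be a Banach lattice and $T_i:\FBL[E_i]\to Y$ lattice homomorphisms with $M=\sup_i\|T_i\|<\infty$.
\begin{enumerate}
\item Restrict to the generators: $S_i=T_i\delta_{E_i}:E_i\to Y$ satisfies $\|S_i\|\le M$.
\item Sum them: $S((x_i)_i)=\sum_i S_ix_i$ defines a bounded operator $S:E\to Y$ with $\|S\|\le M$. The series converges absolutely because $\sum_i\|S_ix_i\|\le M\sum_i\|x_i\|$, and this is exactly the $\ell_1$-sum universal property.
\item Extend: the universal property of $\FBL[E]$ gives a lattice homomorphism $\widehat S:\FBL[E]\to Y$ with $\widehat S\delta_E=S$ and $\|\widehat S\|=\|S\|\le M$.
\end{enumerate}
To see that $\widehat S\varphi_i=T_i$, note that both are lattice homomorphisms $\FBL[E_i]\to Y$ and
\[
\widehat S\varphi_i\delta_{E_i}=\widehat S\delta_E\iota_i=S\iota_i=S_i=T_i\delta_{E_i}.
\]
By the uniqueness part of the universal property of $\FBL[E_i]$ (equivalently, because $\delta_{E_i}(E_i)$ generates a dense sublattice), they coincide.

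\textbf{Uniqueness.} Suppose $R:\FBL[E]\to Y$ is a lattice homomorphism with $R\varphi_i=T_i$ for all $i$. Then $R\delta_E\iota_i=T_i\delta_{E_i}=S\iota_i$ for every $i$. Both $R\delta_E$ and $S$ are bounded linear maps agreeing on the dense subspace spanned by the $\iota_i(E_i)$, so $R\delta_E=S$. Hence $R=\widehat S$, by uniqueness of extensions from $\FBL[E]$.

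There is no serious obstacle in this argument; the point needing care is the norm bookkeeping. The construction gives $\|\widehat S\|\le\sup_i\|T_i\|$, which is exactly the bound required in the Definition. Contractivity of the $\varphi_i$ follows from the norm-preserving extension property. By uniqueness of free products we obtain a lattice isometric isomorphism $\bigfree_{i\in I}\FBL[E_i]\to\FBL[E]$ that sends the canonical copies to the $\overline{\iota_i}(\FBL[E_i])$.
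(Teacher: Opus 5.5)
Your proposal is correct and follows the paper's argument essentially step by step. You take $\overline{\iota_i}$ as the canonical maps, assemble $S=\bigoplus_i T_i\delta_{E_i}$ on the $\ell_1$-sum, extend it to $\widehat S$, and conclude using uniqueness in each $\FBL[E_i]$ and density of the generators. Your handling of uniqueness and of the norm bounds is just slightly more explicit than the paper's.
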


\begin{proof}
Let
\[
Z_\textbf{E}=\Big(\bigoplus_{i\in I}E_i\Big)_{\ell_1},
\] 
let $\iota_i:E_i\to Z_\textbf{E}$ be the canonical inclusions, and let $\delta_i:E_i\to \FBL[E_i]$ and $\delta_{Z_\textbf{E}}:Z_\textbf{E}\to \FBL[Z_\textbf{E}]$ be the canonical embeddings. For each $i\in I$, the universal property of $\FBL[E_i]$ yields a contractive lattice homomorphism
\[
\overline{\iota}_i:\FBL[E_i]\to \FBL[Z_\textbf{E}]
\]
such that 
\[
\xymatrix{\FBL[E_i] \ar[r]^{\overline{\iota_i}} & \FBL[Z_{\bold{E}}] \\ E_i \ar[u]^{\delta_i} \ar[r]^{\iota_i} & Z_{\bold{E}} \ar[u]^{\delta_{Z_{\bold{E}}}}}
\]
commutes.

We claim that $\FBL[Z_\textbf{E}]$, together with the maps $\overline{\iota}_i$, is a free product of the family $(\FBL[E_i])_{i\in I}$. Indeed, let $Y$ be a Banach lattice and let $T_i:\FBL[E_i]\to Y$ be lattice homomorphisms with $\sup_i\|T_i\|<\infty$. Define $S_i=T_i\delta_i:E_i\to Y$ and
\[
S=\bigoplus_{i\in I}S_i:Z_\textbf{E}\to Y.
\]
By the universal property of $\FBL[Z_\textbf{E}]$, there exists a lattice homomorphism $\widehat{S}:\FBL[Z_\textbf{E}]\to Y$ such that $\widehat{S}\delta_{Z_\textbf{E}}=S$. Then
\[
\widehat{S}\,\overline{\iota}_i\delta_i
 =\widehat{S}\delta_{Z_\textbf{E}}\iota_i
 =S\iota_i
 =S_i
 =T_i\delta_i.
\]
By uniqueness in $\FBL[E_i]$, we have $\widehat{S}\,\overline{\iota}_i=T_i$ for all $i\in I$. Uniqueness of $\widehat{S}$ follows because $\delta_{Z_\textbf{E}}(Z_\textbf{E})$ generates a dense sublattice of $\FBL[{Z_\textbf{E}}]$.
\end{proof}

\begin{proposition}
Let $X$ be a Banach lattice and let $E$ be a Banach space. If $J_X$ denotes the closed ideal of $\FBL[X\oplus_1 E]$ generated by the elements
\[
|\delta_{X\oplus_1 E}(x,0)|-\delta_{X\oplus_1 E}(|x|,0),
\qquad x\in X,
\]
then
\[
X\ast \FBL[E]\simeq \FBL[X\oplus_1 E]/J_X
\]
lattice isometrically.
\end{proposition}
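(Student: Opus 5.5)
The plan is to verify directly that $W:=\FBL[X\oplus_1 E]/J_X$, with suitable maps, satisfies the universal property of the free product of the pair $(X,\FBL[E])$. The argument parallels the proofs of Theorem~\ref{thm:existence} and Proposition~\ref{prop:fbl-preserves}. Write $\delta=\delta_{X\oplus_1 E}$, let $Q:\FBL[X\oplus_1 E]\to W$ be the quotient map, and let $\iota_X,\iota_E$ be the canonical inclusions into $X\oplus_1 E$. The two canonical maps will be
\[
\varphi_1=Q\delta\iota_X:X\to W,
\]
which is contractive, and a lattice homomorphism precisely because of the generators of $J_X$, and
\[
\varphi_2=Q\,\overline{\iota_E}:\FBL[E]\to W,
\]
where $\overline{\iota_E}:\FBL[E]\to\FBL[X\oplus_1 E]$ is the contractive lattice homomorphism induced by $\iota_E$ via functoriality of $\FBL$.

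For existence, take a Banach lattice $Y$ and lattice homomorphisms $T_1:X\to Y$ and $T_2:\FBL[E]\to Y$ with $M=\max\{\|T_1\|,\|T_2\|\}$. I will define
\[
S:X\oplus_1 E\to Y,\qquad S(x,e)=T_1x+T_2\delta_E e.
\]
Then $\|S\|\le M$, by the $\ell_1$-norm and because $\delta_E$ is isometric. Let $\widehat S:\FBL[X\oplus_1E]\to Y$ be its lattice homomorphic extension with $\|\widehat S\|=\|S\|$. Since $T_1$ is a lattice homomorphism, $\widehat S$ kills the generators $|\delta(x,0)|-\delta(|x|,0)$, so $J_X\subseteq\ker\widehat S$ and $\widehat S$ factors as $T Q$ with $\|T\|\le M$. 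We then have $T\varphi_1=S\iota_X=T_1$. Also $T\varphi_2\delta_E=\widehat S\delta\iota_E=T_2\delta_E$, and since both $T\varphi_2$ and $T_2$ are lattice homomorphisms on $\FBL[E]$ agreeing on $\delta_E(E)$, uniqueness in the universal property of $\FBL[E]$ gives $T\varphi_2=T_2$.

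For uniqueness, observe that $Q\delta(X\oplus_1 E)$ generates a dense sublattice of $W$. Its elements are sums $\varphi_1x+\varphi_2\delta_E e$, so any lattice homomorphism $R:W\to Y$ with $R\varphi_1=T_1$ and $R\varphi_2=T_2$ coincides with $T$ on a dense sublattice, and hence everywhere. This shows that $(W,\varphi_1,\varphi_2)$ is a free product. The remark on uniqueness of free products then yields the lattice isometric isomorphism $X\ast\FBL[E]\simeq W$.

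The only delicate point is checking that the single relation ideal $J_X$ suffices, i.e.\ that no relations are needed on the $E$-side. This holds because $\overline{\iota_E}$ is automatically a lattice homomorphism, so $\varphi_2$ is one without imposing anything. An alternative route I would mention is via Proposition~\ref{prop:fbl-preserves} and Corollary~\ref{cor:freeprod-quotients}: write $X=\FBL[X]/\ker\beta_X$, so $X\ast\FBL[E]\simeq(\FBL[X]\ast\FBL[E])/J\simeq\FBL[X\oplus_1E]/J$, and identify $J$ with $J_X$ using that $\ker\beta_X$ is generated by $|\delta_Xx|-\delta_X|x|$. The direct verification above is cleaner, though, and it is the one I would write.
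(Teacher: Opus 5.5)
Your proof is correct and follows the paper's argument essentially step for step. You use the same canonical maps $x\mapsto Q\delta(x,0)$ and $Q\,\overline{\iota_E}$, the same extension of $T_1\oplus T_2\delta_E$ to $\FBL[X\oplus_1 E]$ factored through $J_X$, and you get uniqueness from density of the sublattice generated by $Q\delta(X\oplus_1 E)$. You are in fact slightly more explicit than the paper, spelling out the bound $\|T\|\le M$ and the appeal to uniqueness in $\FBL[E]$ that gives $T\varphi_2=T_2$.
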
 

\begin{proof}
Let $\delta_E:E\to \FBL[E]$ and $\delta_{X\oplus_1 E}:X\oplus_1 E\to \FBL[X\oplus_1 E]$ denote the canonical embeddings, and let $\iota_2:E\to X\oplus_1 E$ be the inclusion into the second coordinate. By the universal property of $\FBL[E]$, the composition $\delta_{X\oplus_1 E}\iota_2$ extends to a contractive lattice homomorphism
\[
\overline{\iota}_2:\FBL[E]\to \FBL[X\oplus_1 E].
\]
Let $q:\FBL[X\oplus_1 E]\to \FBL[X\oplus_1 E]/J_X$ be the quotient map. We claim that the quotient, together with the maps
\[
x\mapsto q\delta_{X\oplus_1 E}(x,0)
\qquad\text{and}\qquad
u\mapsto q\overline{\iota}_2(u),
\]
is a free product of $X$ and $\FBL[E]$.

Let $Z$ be a Banach lattice, let $T_1:X\to Z$ be a lattice homomorphism, and let $T_2:\FBL[E]\to Z$ be a lattice homomorphism. Put $S_2=T_2\delta_E:E\to Z$. By the universal property of $\FBL[X\oplus_1 E]$, the operator
\[
T_1\oplus S_2:X\oplus_1 E\to Z
\]
extends uniquely to a lattice homomorphism
\[
\widetilde{T}:\FBL[X\oplus_1 E]\to Z.
\]
For $x\in X$ we have
\[
\widetilde{T}\bigl(\delta_{X\oplus_1 E}(|x|,0)-|\delta_{X\oplus_1 E}(x,0)|\bigr)
=
T_1|x|-|T_1x|=0,
\]
so $J_X\subseteq \ker \widetilde{T}$. Thus $\widetilde{T}$ factors through a lattice homomorphism
\[
T:\FBL[X\oplus_1 E]/J_X\to Z.
\]
By construction,
\[
Tq\delta_{X\oplus_1 E}(x,0)=T_1x
\qquad\text{and}\qquad
Tq\overline{\iota}_2=T_2.
\]
Uniqueness follows because $q\delta_{X\oplus_1 E}(X\oplus_1 E)$ generates a dense sublattice of the quotient. Hence the quotient satisfies the universal property of $X\ast \FBL[E]$.
\end{proof}

\section{Free Products of $C(K)$-Spaces}\label{sec:C(K)}

Throughout this section, $K_1$ and $K_2$ denote compact Hausdorff spaces, and
\[
\varphi_1:C(K_1)\to C(K_1)\ast C(K_2),
\qquad
\varphi_2:C(K_2)\to C(K_1)\ast C(K_2)
\]
denote the canonical inclusions. Recall that the join $ K_1* K_2$ is the quotient space of $ K_1\times K_2\times [0,1]$ when we identify points of the form $(x,y,0)$ and $(x,y',0)$ for each $x\in K_1$ and $y,y'\in K_2$ and similarly points of the form $(x,y,1)$ and $(x',y,1)$, for $x,x'\in K_1$ and $y\in K_2$, where $ K_1 \times K_2\times [0,1]$ is endowed with the product topology.

\begin{proposition}\label{prop:strong-unit}
The element
\[
e=\varphi_1(\one_{K_1})+\varphi_2(\one_{K_2})\in C(K_1)\ast C(K_2)
\]
is a strong order unit.
\end{proposition}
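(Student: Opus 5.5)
We show that every element $z$ of $C(K_1)\ast C(K_2)$ satisfies $|z|\le \lambda e$ for some $\lambda\ge 0$. It suffices to find a norm-closed ideal containing $e$ that is dense, or more directly to show that the ideal $I_e$ generated by $e$ contains the dense sublattice $Z=\lat\{\varphi_1(C(K_1))\cup\varphi_2(C(K_2))\}$ and is closed in an appropriate sense. Concretely, we show that $|z|\le \|z\|_*\, e$ for every $z$, which gives the conclusion directly.

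First, by Proposition~\ref{prop:unit-ball} the unit ball of $C(K_1)\ast C(K_2)$ is the closed solid convex hull $C$ of $\varphi_1(B_{C(K_1)})\cup\varphi_2(B_{C(K_2)})$. For $f\in B_{C(K_1)}$ we have $|f|\le \one_{K_1}$. Since $\varphi_1$ is a lattice homomorphism, $|\varphi_1 f|=\varphi_1|f|\le \varphi_1(\one_{K_1})\le e$, where the last inequality uses $\varphi_2(\one_{K_2})\ge 0$. The same holds for $\varphi_2$. Hence every element of $A=\varphi_1(B_{C(K_1)})\cup\varphi_2(B_{C(K_2)})$ lies in the order interval $[-e,e]$.

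Next, $[-e,e]$ is convex and solid, and it is norm closed because the positive cone of a Banach lattice is closed. Therefore $C\subseteq[-e,e]$. By homogeneity, $|z|\le \|z\|_*\,e$ for every $z$ in the free product, so $e$ is a strong order unit. Note also that $e\ge 0$, and $e\neq 0$ unless both $K_i$ are empty.

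There is no real obstacle here. The only point to check is that the solid convex hull (before closure) stays inside $[-e,e]$: convex combinations of elements of $[-e,e]$ stay there, and solidity is preserved because $|y|\le|x|\le e$. If one prefers to avoid Proposition~\ref{prop:unit-ball}, the same domination can be run on the dense sublattice $Z$ using the inequalities $|x\pm y|,|x\vee y|,|x\wedge y|\le|x|+|y|$, which give $|z|\le\lambda e$ on $Z$, and then one extends by density. That extension is less clean, since it requires uniform control of $\lambda$ by the norm, which is exactly what the unit-ball description supplies.
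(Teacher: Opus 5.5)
Your proof is correct. It differs from the paper's mainly in which earlier result carries the weight.

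The paper works on the dense sublattice $Z=\lat\bigl(\varphi_1(C(K_1))\cup\varphi_2(C(K_2))\bigr)$, where $e$ is trivially a strong unit. It equips $Z$ with the gauge norm $\|z\|_e=\inf\{\lambda>0:\ |z|\leq\lambda e\}$, checks $\|\varphi_i(f)\|_e\leq\|f\|_\infty$, and applies maximality of the free product norm (Proposition~\ref{prop:maximal-norm}) to get $\|z\|_e\leq\|z\|_*$ on $Z$. Combined with $\|z\|_*\leq 2\|z\|_e$ (from $\|e\|_*\leq 2$), this makes the two norms equivalent on $Z$, and the conclusion is then transferred to the completion.

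You instead use the unit-ball description of Proposition~\ref{prop:unit-ball}, which is itself derived from maximality. You observe that $[-e,e]$ is a norm-closed, convex, solid set containing $\varphi_1(B_{C(K_1)})\cup\varphi_2(B_{C(K_2)})$, which gives $|z|\leq\|z\|_*\,e$ on the whole free product at once.

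Your route handles the passage from $Z$ to its closure automatically, through the closedness of $[-e,e]$. The paper compresses this step into the phrase ``they have the same completion''. Spelling it out needs essentially your argument: approximate $z$ by $z_n\in Z$ with $|z_n|\leq\|z_n\|_*\,e$, and use that the positive cone is closed.

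The paper's route records the two-sided estimate $\|z\|_e\leq\|z\|_*\leq 2\|z\|_e$. This says the free product norm is $2$-equivalent to the $AM$-norm of $e$, which underlies the constant $2$ in Theorem~\ref{thm:CK-join}. You recover the same estimate from $|z|\leq\|z\|_*\,e$ by adding the trivial bound $\|e\|_*\leq 2$.
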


\begin{proof}
Let
\[
Z=\lat\big(\varphi_1(C(K_1))\cup \varphi_2(C(K_2))\big).
\]
The element $e$ is clearly a strong order unit in $Z$, so the gauge
\[
\|z\|_e=\inf\{\lambda>0:\ |z|\leq \lambda e\},
\qquad z\in Z,
\]
is a lattice norm on $Z$. If $f\in C(K_1)$, then
\[
|\varphi_1(f)|\leq \|f\|_\infty\, \varphi_1(\one_{K_1})\leq \|f\|_\infty\, e,
\]
and therefore $\|\varphi_1(f)\|_e\leq \|f\|_\infty$. The same estimate holds for $\varphi_2(g)$ with $g\in C(K_2)$. By Proposition~\ref{prop:maximal-norm},
\[
\|z\|_e\leq \|z\|_{\ast}
\qquad (z\in Z).
\]
Conversely, if $|z|\leq \lambda e$, then
\[
\|z\|_{\ast}\leq \lambda \|e\|_{\ast}
\leq 2\lambda,
\]
because $\|\varphi_i(\one_{K_i})\|_{\ast}=1$ by Remark~\ref{rem:isometric-copies}. Hence
\[
\|z\|_{\ast}\leq 2\|z\|_e
\qquad (z\in Z).
\]
Thus the norms $\|\cdot\|_e$ and $\|\cdot\|_{\ast}$ are equivalent on the dense sublattice $Z$, so they have the same completion. In particular, every element of $C(K_1)\ast C(K_2)$ is dominated by a multiple of $e$, and therefore $e$ is a strong order unit.
\end{proof}

For $a,b\in \mathbb{R}$, let
\[
L(a,b)(t)=(1-t)a+tb,
\qquad t\in [0,1].
\]
If $f\in C(K_1)$ and $g\in C(K_2)$, define $L(f,g)\in C(K_1\ast K_2)$ by
\[
L(f,g)[x,y,t]=(1-t)f(x)+tg(y).
\]

\begin{theorem}\label{thm:CK-join}
Let $K_1$ and $K_2$ be compact Hausdorff spaces. The spaces $C(K_1)\ast C(K_2)$ and $C(K_1\ast K_2)$ are lattice isomorphic (with constant 2). Under this identification, the canonical copies of $C(K_1)$ and $C(K_2)$ are given by
\[
\varphi_1(g)=L(g,0),
\qquad
\varphi_2(h)=L(0,h).
\]
and the free product norm of a function $f\in C(K_1*K_2)$ is given by
\[
\|f\|_*
=\inf\Big\{a+b:\ a,b\in\mathbb R_+,\ |f[x,y,t]|\leq (1-t)a+tb\ \text{for all } [x,y,t]\in K_1\ast K_2\Big\}.
\]

\end{theorem}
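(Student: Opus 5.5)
Since $C(K_1)\ast C(K_2)$ has the strong order unit $e=\varphi_1(\one_{K_1})+\varphi_2(\one_{K_2})$ (Proposition~\ref{prop:strong-unit}) and $\|\cdot\|_e\le\|\cdot\|_*\le 2\|\cdot\|_e$, the space $(C(K_1)\ast C(K_2),\|\cdot\|_e)$ is an AM-space with unit. By Kakutani's theorem it is lattice isometric to $C(K)$ for some compact $K$, and $\|\cdot\|_*$ is a lattice norm $2$-equivalent to the sup norm. So the plan reduces to identifying $K$ with $K_1\ast K_2$, in such a way that $e$ corresponds to $\one$, and then computing the norm.

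\textbf{Identifying $K$.} Recall that $K$ can be taken as the set of lattice homomorphisms $u:C(K_1)\ast C(K_2)\to\mathbb R$ with $u(e)=1$, with the weak$^*$ topology. By the universal property, such $u$ corresponds bijectively to a pair $(u\varphi_1,u\varphi_2)$ of real lattice homomorphisms on $C(K_1)$ and $C(K_2)$. Uniqueness of the extension gives injectivity, and every pair extends, with boundedness automatic. Writing $u\varphi_1=c_1\delta_x$ and $u\varphi_2=c_2\delta_y$ with $c_i\ge0$, the normalization $u(e)=1$ forces $c_1+c_2=1$. Put $t=c_2$. When $t=0$ the point $y$ is irrelevant, and when $t=1$ the point $x$ is irrelevant, so the parameter space is exactly $K_1\ast K_2$ as a set.

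Continuity of the map $[x,y,t]\mapsto u$ into the weak$^*$ topology follows by checking it on the dense sublattice generated by the $\varphi_i$. On generators $u(\varphi_1 g)=(1-t)g(x)$ and $u(\varphi_2 h)=t\,h(y)$, which are continuous on $K_1\times K_2\times[0,1]$ and respect the identifications. Lattice expressions in generators are then continuous too, and norm density together with uniform boundedness of the evaluations extends continuity to everything. A continuous bijection from the compact space $K_1\ast K_2$ onto the Hausdorff space $K$ is a homeomorphism. Under the Kakutani representation, $\varphi_1(g)$ becomes $[x,y,t]\mapsto(1-t)g(x)=L(g,0)$ and $\varphi_2(h)=L(0,h)$, as claimed. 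The isomorphism constant $2$ comes from $\|\cdot\|_e\le\|\cdot\|_*\le2\|\cdot\|_e$.

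\textbf{Norm formula.} I would mimic Proposition~\ref{prop:Rn}. Define $p(f)=\inf\{a+b:\ |f|\le aL(\one,0)+bL(0,\one)\}$. This $p$ is a lattice norm on $C(K_1\ast K_2)$. It satisfies $p(L(g,0))\le\|g\|_\infty$, since $|L(g,0)|\le\|g\|_\infty L(\one,0)$. Proposition~\ref{prop:maximal-norm} then gives $p\le\|\cdot\|_*$ on the dense sublattice, and hence everywhere by continuity and equivalence of the norms. Conversely, if $|f|\le a\varphi_1(\one)+b\varphi_2(\one)$, then $\|f\|_*\le a+b$. This is exactly the stated formula, since $aL(\one,0)+bL(0,\one)=(1-t)a+tb$.

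The main obstacle is the careful identification of the Kakutani spectrum, in particular justifying that every lattice homomorphism into $\mathbb R$ is determined by its restrictions to the factors, and that the topology matches. The rest is routine.
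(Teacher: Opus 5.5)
Your proposal is correct and follows the paper's proof essentially step by step: the strong unit $e$ and Kakutani's theorem, the identification of the spectrum with $K_1\ast K_2$ through the universal property, a continuous bijection between compact Hausdorff spaces, and the norm formula via Proposition~\ref{prop:maximal-norm}. The only real difference is the direction of the continuity proof. You show $[x,y,t]\mapsto u_{x,y,t}$ is continuous out of the join, which the quotient topology gives directly on generators, extended to all of the free product by density and the bound $|u(z)|\le\|z\|_e\le\|z\|_*$. The paper instead proves the inverse $u\mapsto[x_1,x_2,c_2]$ continuous into the join using nets, with a separate case analysis at $c_2\in\{0,1\}$; either direction suffices because both spaces are compact Hausdorff.
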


\begin{proof}
By Proposition~\ref{prop:strong-unit}, the free product $C(K_1)\ast C(K_2)$ has a strong order unit. By Kakutani's representation theorem, there exists a compact Hausdorff space $F$ such that
\[
C(K_1)\ast C(K_2)\simeq C(F),
\]
where \[
F=\{u:C(K_1)*C(K_2)\to\mathbb R:u\text{ is a lattice homomorphism and } u(e)=1\}
\]
with the product topology and $e=\varphi_1(\bold 1_{K_1})+\varphi_2(\bold 1_{K_2})$ is the strong unit from Proposition \ref{prop:strong-unit}. We claim that $F\cong K_1*K_2$.

Let
\[
\pi:K_1\times K_2\times [0,1]\to K_1\ast K_2
\]
be the quotient map. 

Let $u\in F$, and set $u_i=u\circ \varphi_i$ for $i=1,2$. Each $u_i$ is a lattice homomorphism from $C(K_i)$ to $\mathbb{R}$, so $u_i=c_i\delta_{x_i}$ for some $c_i\geq 0$ and $x_i\in K_i$. If $c_i=0$, we can choose $x_i\in K_i$ arbitrarily. Since
\[
1=u(e)=u_1(\one_{K_1})+u_2(\one_{K_2})=c_1+c_2,
\]
we may define
\[
\Phi(u)=[x_1,x_2,c_2]\in K_1\ast K_2.
\]
The identifications of points in the join ensure that $\Phi$ is well defined when $c_2=0$ or $c_2=1$.

The map $\Phi$ is bijective. Indeed, given $[x_1,x_2,t]\in K_1\ast K_2$, the lattice homomorphisms $(1-t)\delta_{x_1}$ on $C(K_1)$ and $t\delta_{x_2}$ on $C(K_2)$ extend uniquely to a lattice homomorphism
\[
u:C(K_1)\ast C(K_2)\to \mathbb{R}
\]
with $u(e)=1$, and $\Phi(u)=[x_1,x_2,t]$. Conversely, if $\Phi(u)=\Phi(v)=[x_1,x_2,t]$, then
\[
u_1=(1-t)\delta_{x_1}=v_1,
\qquad
u_2=t\delta_{x_2}=v_2,
\]
and uniqueness in the free product gives $u=v$.

It remains to prove continuity. Let $(u^\alpha)$ be a net in $F$ converging to $u\in F$. Write
\[
u_i^\alpha=c_i^\alpha\delta_{x_i^\alpha},
\qquad
u_i=c_i\delta_{x_i}
\]
whenever the corresponding functional is non-zero, and choose $x_i^\alpha\in K_i$ arbitrarily if $u_i^\alpha=0$. Then
\[
c_i^\alpha=u_i^\alpha(\one_{K_i})\longrightarrow u_i(\one_{K_i})=c_i
\qquad (i=1,2).
\]
If $0<c_i<1$, then eventually $c_i^\alpha>0$, and for every $f\in C(K_i)$ we have
\[
c_i^\alpha f(x_i^\alpha)=u_i^\alpha(f)\longrightarrow u_i(f)=c_i f(x_i).
\]
Since $c_i^\alpha\to c_i>0$, it follows that $f(x_i^\alpha)\to f(x_i)$ for every $f\in C(K_i)$, hence $x_i^\alpha\to x_i$. Therefore
\[
\Phi(u^\alpha)\to \Phi(u)
\]
whenever $0<c_2<1$.

If $c_2=0$, then $c_1=1$ and $x_1^\alpha\to x_1$ by the same argument applied to $u_1^\alpha$. A basic neighborhood of $\Phi(u)$ in $K_1\ast K_2$ has the form
\[
\pi(U\times K_2\times [0,\varepsilon))
\]
for some neighborhood $U$ of $x_1$ in $K_1$. Since $x_1^\alpha\to x_1$ and $c_2^\alpha\to 0$, we eventually have
\[
\Phi(u^\alpha)\in \pi(U\times K_2\times [0,\varepsilon)).
\]
The case $c_2=1$ is analogous. Thus $\Phi$ is continuous.

Since $F$ is compact and $K_1\ast K_2$ is Hausdorff, $\Phi$ is a homeomorphism. Therefore
\[
C(K_1)\ast C(K_2)\simeq C(F)\simeq C(K_1\ast K_2).
\]

To see what the canonical inclusions look like under this identification, fix $f\in C(K_1)$ and $[x,y,t]\in K_1\ast K_2$. Let
\[
u_{x,y,t}:C(K_1)\ast C(K_2)\to \mathbb{R}
\]
be the lattice homomorphism corresponding, via the argument above, to the point $[x,y,t]$. Then
\[
\varphi_1(f)[x,y,t]
=u_{x,y,t}(\varphi_1(f))
=((1-t)\delta_x\ast t\delta_y)(\varphi_1(f))
=(1-t)f(x)
=L(f,0)[x,y,t].
\]
The formula for $\varphi_2(g)$ is identical.

Lastly, we compute the free product norm. Define for every $f\in C(K_1*K_2)$ the following lattice seminorm
\[
p(f)=\inf\Big\{a+b:\ a,b\in\mathbb R_+,\ |f|\leq L(a,b)\Big\}.
\]
This is a lattice norm, and it is equivalent to the supremum norm because
\[
\|f\|_\infty\leq p(f)\leq 2\|f\|_\infty.
\]

If $|f|\leq L(a,b)$, then
\[
L(a,b)=L(a,0)+L(0,b)=a\,\varphi_1(\one_{K_1})+b\,\varphi_2(\one_{K_2}),
\]
and therefore
\[
\|f\|_{\ast}\leq a\|\varphi_1(\one_{K_1})\|_{\ast}+b\|\varphi_2(\one_{K_2})\|_{\ast}=a+b.
\]
Taking the infimum yields $\|f\|_{\ast}\leq p(f)$.

On the other hand, if $g\in C(K_1)$ then $|L(g,0)|\leq L(a,b)$ implies $|g(x)|\leq a$ for every $x\in K_1$, hence
\[
p(L(g,0))\leq\|g\|_\infty.
\]
Similarly,
\[
p(L(0,h))\leq\|h\|_\infty
\qquad (h\in C(K_2)).
\]
Thus $p$ agrees with the original norms on the two canonical copies. By Proposition~\ref{prop:maximal-norm},
\[
p(f)\leq \|f\|_{\ast}
\]
on the dense sublattice generated by those copies, and therefore on all of $C(K_1\ast K_2)$ by continuity. Hence $p=\|\cdot\|_{\ast}$.
\end{proof}

\begin{remark} 
Given a Banach lattice $X$, let $\mathbf{n}(X)$ denote the least cardinality of a set that generates $X$ as a Banach lattice. By \cite[Proposition~9.6]{freebanlat}, one has $\mathbf{n}(\FBL[E])=\dim(E)$ when $E$ is finite-dimensional and $\mathbf{n}(\FBL[E])=\infty$ when $E$ is infinite-dimensional. In general,
\[
\max\{\mathbf{n}(X),\mathbf{n}(Y)\}\leq \mathbf{n}(X\ast Y)\leq \mathbf{n}(X)+\mathbf{n}(Y),
\]
and the second inequality can be strict since $\mathbb{R}^2\ast \mathbb{R}\simeq C([0,1])\simeq \mathbb{R}\ast \mathbb{R}$. The fact that $\mathbb{R}^2\ast \mathbb{R}\simeq C([0,1])$ is a direct consequence of Theorem \ref{thm:CK-join}.
\end{remark}

The class of $AM$-spaces is well-behaved under free products too. Let $K$ be a compact Hausdorff space. If $\mathcal{M}$ is a set of triples $(x,y,\alpha)$ with $x\neq y\in K$ and $\alpha\geq 0$, let
\[
C(K;\mathcal{M})=\{f\in C(K): f(x)=\alpha f(y)\text{ for all }(x,y,\alpha)\in \mathcal{M}\}.
\]
Every $AM$-space with strong unit is lattice isometric to a space of this form. Moreover, if $(x,y,\alpha_1)$ and $(x,y,\alpha_2)$ both belong to $\mathcal{M}$ with $\alpha_1>\alpha_2$, then necessarily $f(x)=f(y)=0$ for every $f\in C(K;\mathcal{M})$, so those relations are equivalent to the pair $(x,y,0)$ and $(y,x,0)$. Thus one may assume without loss of generality that $\mathcal{M}$ does not contain any pair of triples of the form $(x,y,\alpha_1)$ and $(x,y,\alpha_2)$ with $\alpha_1>\alpha_2$.

Kakutani's theorem actually yields the following (\cite[Theorem 1.b.5]{LT2}):
Let $Y$ be a sublattice of $C(K)$, and let $\mathcal{M}$ be the set of triples $(x,y,\alpha)$ such that $f(x)=\alpha f(y)$ for every $f\in Y$. Then
\[
\overline{Y}=C(K;\mathcal{M}).
\]

\begin{proposition}\label{prop:AM-freeproduct}
Let $\mathcal{M}_1$ and $\mathcal{M}_2$ be relation sets on compact spaces $K_1$ and $K_2$, respectively. Then
\[
C(K_1;\mathcal{M}_1)\ast C(K_2;\mathcal{M}_2)\simeq C(K_1\ast K_2;\mathcal{M}),
\]
where $\mathcal M$ is the set of all triples
\[
([x,z,t],[y,w,s],\lambda)
\]
such that $(x,y,\alpha)\in \mathcal{M}_1$ whenever $t<1$, $(z,w,\beta)\in \mathcal{M}_2$ whenever $t>0$, and
\[
\lambda=\alpha(1-t)+\beta t.
\]
If $\lambda=0$, $s$ can be arbitrary; otherwise
\[
s=\frac{\beta t}{\alpha(1-t)+\beta t}
\]
for $0<t<1$. If $t=0$ and $(w,z,0)\in\mathcal{M}_2$, $s$ can take any value; else $s=0$. If $t=1$ and $(y,x,0)\in\mathcal{M}_1$, $s$ can be arbitrary; otherwise $s=1$.

\end{proposition}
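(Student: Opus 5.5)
The plan is to realize $C(K_1;\mathcal{M}_1)\ast C(K_2;\mathcal{M}_2)$ as a closed sublattice of $C(K_1)\ast C(K_2)\simeq C(K_1\ast K_2)$, and then read off the relations it satisfies using Kakutani's description of closed sublattices (\cite[Theorem 1.b.5]{LT2}).

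First, write $Y_i=C(K_i;\mathcal{M}_i)$, which are closed sublattices of $C(K_i)$. By Proposition~\ref{prop:embedding-stability}, the induced map $Y_1\ast Y_2\to C(K_1)\ast C(K_2)$ is a lattice isometric embedding. By Proposition~\ref{prop:dense-sublattice}, its range is the closed sublattice generated by $\varphi_1(Y_1)\cup\varphi_2(Y_2)$. By Theorem~\ref{thm:CK-join}, this range is the closure of the sublattice $Y$ of $C(K_1\ast K_2)$ generated by the functions $L(g,0)$, $g\in Y_1$, and $L(0,h)$, $h\in Y_2$. Here the identification is a lattice isomorphism, so the result holds up to lattice isomorphism, which is what is claimed. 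Kakutani's theorem then gives $\overline{Y}=C(K_1\ast K_2;\mathcal{M}')$, where $\mathcal{M}'$ is the set of all triples $(p,q,\lambda)$ with $f(p)=\lambda f(q)$ for every $f\in Y$. Since the relations $f(p)=\lambda f(q)$ are preserved under lattice operations and linear combinations, it suffices to test them on the generators. So it remains to show $\mathcal{M}'=\mathcal{M}$ (modulo the normalization of redundant triples discussed before the statement).

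The core computation is this. Fix $p=[x,z,t]$ and $q=[y,w,s]$. The triple $(p,q,\lambda)$ lies in $\mathcal{M}'$ if and only if
\[
(1-t)g(x)=\lambda(1-s)g(y)\quad\text{for all } g\in Y_1,
\]
\[
t\,h(z)=\lambda s\,h(w)\quad\text{for all } h\in Y_2.
\]
When $t<1$, the first condition says exactly that $(x,y,\lambda(1-s)/(1-t))\in\mathcal{M}_1$ (or the trivial relation if $x=y$). Writing this ratio as $\alpha$ gives $\lambda(1-s)=\alpha(1-t)$. Similarly, when $t>0$, the second condition gives $\lambda s=\beta t$ with $(z,w,\beta)\in\mathcal{M}_2$. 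Adding the two equations yields $\lambda=\alpha(1-t)+\beta t$. If $\lambda\neq0$, it also yields $s=\beta t/\lambda$, which is the formula in the statement. If $\lambda=0$, both conditions force $g(x)=0$ and $h(z)=0$ on the relevant factor, so $s$ is unconstrained.

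The boundary cases follow the same pattern. For $t=0$ the first coordinate of $p$ is irrelevant, and the condition reads $g(x)=\lambda(1-s)g(y)$ together with $0=\lambda s\,h(w)$. This forces $s=0$ unless $h(w)=0$ for all $h\in Y_2$, that is, unless $(w,z,0)\in\mathcal{M}_2$ in the normalized form, in which case $s$ is free. The case $t=1$ is symmetric. Conversely, every triple of $\mathcal{M}$ satisfies both displayed conditions by reversing the same computation, so $\mathcal{M}\subseteq\mathcal{M}'$.

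The main obstacle I expect is bookkeeping rather than conceptual. Two sources of ambiguity have to be tracked carefully: the join identifications (a point with $t=0$ has no well-defined second coordinate) and degenerate relations (points where every function in $Y_i$ vanishes, and the trivial relation $x=y$, $\alpha=1$). The statement's conventions on $s$ should be matched exactly against the normalization of $\mathcal{M}_i$ fixed before the statement. I would handle this by first reducing to representatives of $p$ and $q$, then splitting into the cases $0<t<1$, $t=0$ and $t=1$, and within each case into $\lambda=0$ and $\lambda\neq0$.
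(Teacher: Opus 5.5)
Your route is the paper's: embed via Proposition~\ref{prop:embedding-stability} into $C(K_1)\ast C(K_2)\simeq C(K_1\ast K_2)$, use Kakutani's description of closed sublattices, test relations on the generators $L(g,0)$ and $L(0,h)$, and split into cases. Your interior case $0<t<1$ is correct. The step that fails is the boundary case $t=0$ (symmetrically $t=1$) with $s$ left free. With it goes your claim that every triple of $\mathcal M$ satisfies the two displayed conditions ``by reversing the same computation''. Suppose $t=0$, $\lambda>0$ and $h(w)=0$ for all $h\in Y_2$. Your own equations then reduce to $g(x)=\lambda(1-s)g(y)$ for all $g\in Y_1$, i.e.\ $\alpha=\lambda(1-s)$. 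The statement's formula $\lambda=\alpha(1-t)+\beta t$ instead forces $\lambda=\alpha$. The two disagree whenever $0<s<1$ and some $g\in Y_1$ is non-zero at $y$.

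For a concrete case, take $K_1=\{x,y\}$ with $\mathcal M_1=\{(x,y,1),(y,x,1)\}$, and $K_2=\{z,w\}$ with $\mathcal M_2=\{(w,z,0)\}$.
\begin{itemize}
\item The statement's recipe puts $([x,z,0],[y,w,\tfrac12],1)$ into $\mathcal M$. But $\varphi_1(\mathbf{1}_{K_1})=L(\mathbf{1}_{K_1},0)$ lies in the free product and takes the values $1$ and $\tfrac12$ at these two points.
\item Together with the correct interior triple $([y,w,\tfrac12],[x,z,0],\tfrac12)$, this triple would force every function in $C(K_1\ast K_2;\mathcal M)$ to vanish at $[x,\cdot,0]$.
\item Conversely, the true relation $([x,z,0],[y,w,\tfrac12],2)$ is not in the statement's $\mathcal M$.
\end{itemize}
So what your computation proves is a corrected description. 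For $t=0$ with $s$ free, one needs $(x,y,\alpha)\in\mathcal M_1$ with $\alpha=\lambda(1-s)$. For $t=1$ with $s$ free, one needs $(z,w,\beta)\in\mathcal M_2$ with $\beta=\lambda s$.

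The paper's proof has the same defect: it derives $(x,y,\lambda(1-s))\in\mathcal M_1$ when $t=0$, $s>0$, yet still concludes $\mathcal M\subset A$. So this is an error in the statement rather than in your method, but you should not assert the converse inclusion without checking this case. Two minor points also need fixing:
\begin{itemize}
\item At $t=0$ it is the $K_2$-coordinate $z$ of $p$ that the join makes irrelevant, not the first coordinate; your formulas already use this correctly.
\item Reading off ``$(x,y,\cdot)\in\mathcal M_1$'' from the first condition requires $\mathcal M_i$ to be the full, normalized set of relations satisfied by $C(K_i;\mathcal M_i)$. The paper assumes this implicitly too.
\end{itemize}
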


\begin{proof}
Let $A$ be the set of all triples from the statement of the proposition. Since 
\[
C(K_1;\mathcal{M}_1)\ast C(K_2;\mathcal{M}_2)
\]
is a sublattice of $C(K_1\ast K_2)$ by Proposition \ref{prop:embedding-stability}, it must be of the form
\[
C(K_1*K_2;\mathcal M),
\]
where $\mathcal M$ is a certain set of triples. Our goal is to show that $\mathcal M=A$. Take $([x,z,t],[y,w,s],\lambda)\in\mathcal{M}$ with $\lambda>0$ first.

Let us suppose that $0<t<1:$
we must have \[
f(x)=\frac{\lambda(1-s)}{1-t}f(y), \qquad g(z)=\frac{\lambda s}{t}g(w)
\] for all $f\in C(K;\mathcal{M}_1)$ and $g\in C(L;\mathcal{M}_2)$. So $(x,y,\alpha=\frac{\lambda(1-s)}{1-t})\in\mathcal{M}_1$ and $(z,w,\beta=\frac{\lambda s}{t})\in\mathcal{M}_2$, where $\alpha,\beta$ are minimal. 

Suppose now that $t=0$: it must be
\[
f(x)=\lambda(1-s)f(y), \qquad 0=\lambda sg(w).
\] When $s=0$ too, this is equivalent to $(x,y,\alpha=\lambda)\in\mathcal{M}_1$. If $s>0$, it is equivalent to $(x,y,\lambda (1-s))\in\mathcal {M}_1$ and $(w,x,0)\in \mathcal{M}_2$ ($g(w)=0$). 

The case $t=1$ is analogous.

Now consider the case $\lambda=0$: if $0<t<1$, \[
f(x)=0=g(z)
\]
must hold for every $f\in C(K_1;\mathcal{M}_1)$ and $g\in C(K_2;\mathcal{M}_2)$, this means, $(x,y,0)\in\mathcal{M}_1$ and $(z,w,0)\in\mathcal{M}$.

The case $t=0$ is equivalent to $(x,y,0)\in\mathcal{M}_1$. Analogous for $t=1$.

Summarizing, we have proved $\mathcal{M}\subset A$ so far. The other inclusion is direct, recalling the positive homogeneity of lattice linear combinations, and the fact that $lat(Y)=Y^{\vee}-Y^{\vee}$ when $Y$ is a subspace. We conclude by the observation preceding this proposition.
\end{proof}

The join representation also gives a useful topological interpretation of free factors of $C(K)$-spaces.

\begin{proposition}\label{prop:cone-factor}
A Banach lattice of the form $C(K)$ has a free factor isomorphic to $\mathbb{R}$ if and only if $K$ is homeomorphic to a cone. Moreover,
\[
C(K)\simeq C(K)\ast \mathbb{R}
\]
if and only if $K\cong \operatorname{Cone}(K)$.
\end{proposition}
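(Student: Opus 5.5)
The plan is to combine Theorem~\ref{thm:CK-join} with the fact that a $C(K)$-space determines $K$ up to homeomorphism. Note first that $\mathbb{R}=C(\{p\})$, where $\{p\}$ is a one-point space. By Theorem~\ref{thm:CK-join}, for every Banach lattice of the form $C(L)$ we then have
\[
C(L)\ast\mathbb{R}\simeq C(L\ast\{p\})=C(\operatorname{Cone}(L)).
\]
The isomorphism is only up to constant $2$, but it is a genuine lattice isomorphism.

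Suppose now that $K\cong\operatorname{Cone}(L)$ for some compact Hausdorff $L$. Then the display gives $C(K)\simeq C(L)\ast\mathbb{R}$, so $\mathbb{R}$ is a free factor of $C(K)$.

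For the converse, assume $C(K)\simeq X\ast\mathbb{R}$ for some Banach lattice $X$. I first need to show that $X$ is itself a $C(L)$-space. Proposition~\ref{prop:complemented} gives a lattice projection $P=\varphi_1 T$ onto $\varphi_1(X)$, so $X$ is lattice isomorphic to a lattice-homomorphic image of $C(K)$. Its kernel is a closed ideal $I$, and $X\simeq C(K)/I$. Closed ideals of $C(K)$ have the form $\{f: f|_L=0\}$ for a closed $L\subseteq K$, so $C(K)/I\simeq C(L)$. Then
\[
C(K)\simeq C(L)\ast\mathbb{R}\simeq C(\operatorname{Cone}(L)).
\]
By the Banach--Stone theorem for lattice isomorphisms (the maximal ideal space, or the space of real lattice homomorphisms up to scaling, is a lattice invariant), $K\cong\operatorname{Cone}(L)$.

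For the second statement, if $K\cong\operatorname{Cone}(K)$ then $C(K)\simeq C(\operatorname{Cone}(K))\simeq C(K)\ast\mathbb{R}$. Conversely, $C(K)\simeq C(K)\ast\mathbb{R}\simeq C(\operatorname{Cone}(K))$, and the same lattice Banach--Stone argument gives $K\cong\operatorname{Cone}(K)$.

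I expect the main obstacle to be the step that recovers $K$ from the lattice isomorphism class of $C(K)$ when the isomorphism is not isometric. The plan for that step is to argue via the compact space of lattice homomorphisms $u:C(K)\to\mathbb{R}$, modulo positive scalars, which is homeomorphic to $K$ and is preserved by any lattice isomorphism. A second point needing care is that $\varphi_1(X)$ is a free factor only in the lattice-isomorphic sense. This is fine because quotient maps and projections transport the $C(L)$ structure, so $X$ is lattice isomorphic to some $C(L)$, and the join construction in Theorem~\ref{thm:CK-join} is compatible with lattice isomorphisms of the factors via $\overline{\ast}$ applied to the isomorphisms and their inverses.
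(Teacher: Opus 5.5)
Your proposal is correct and follows essentially the paper's argument: identify the complementary factor with some $C(L)$, apply Theorem~\ref{thm:CK-join} with $\mathbb{R}=C(\{p\})$ to get $C(K)\simeq C(\operatorname{Cone}(L))$, and conclude from the fact that lattice isomorphic $C(K)$-spaces have homeomorphic underlying compacta. The only differences are minor. You obtain $X\simeq C(K)/I\simeq C(L)$ as a quotient by a closed ideal, whereas the paper views the factor as an AM-space with strong unit and applies Kakutani. You also spell out the lattice Banach--Stone step and the ``moreover'' part, which the paper leaves implicit.
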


\begin{proof}
If $C(K)\simeq Y\ast \mathbb{R}$, then $Y$ is an $AM$-space with strong unit, so $Y\simeq C(L)$ for some compact $L$. By Theorem~\ref{thm:CK-join},
\[
C(K)\simeq C(L)\ast \mathbb{R}\simeq C(\operatorname{Cone}(L)),
\]
which is equivalent to $K\cong \operatorname{Cone}(L)$. The converse is immediate from the same theorem.
\end{proof}

\begin{example}
Examples of compact spaces that are not cones are abundant; for instance, $S^0$ is not a cone because cones are connected and contractible. On the other hand, $[0,1]=\operatorname{Cone}(\{0\})$ is a cone but it is not homeomorphic to its own cone. A standard example of a compactum $K$ with $K\cong \operatorname{Cone}(K)$ is the Hilbert cube, which we represent as
\[
Q=\{(0,(x_n))\in\ell_2:|x_n|\leq 1/n\}.
\]
for convenience of notation. We then have a continuous map $Cone(Q)\rightarrow co(e_1,Q)$ given by $[(x_n),t]\mapsto (t,(1-t)x_1,(1-t)x_2,\ldots)$. It is bijective, hence a homeomorphism, because $\operatorname{Cone}(Q)$ is compact and $co(e_1,Q)$, the convex hull of $e_1$ and $Q$, is Hausdorff. By \cite[Theorem 3.1, p.100]{inftop}, $\operatorname{Cone}(Q)$ is a Keller space and thus $\operatorname{Cone}(Q)\cong Q$.
\end{example} 

For the rest of this section, we explore some more applications of the join representation to the study of lattice and map properties.

\begin{proposition}\label{prop:order-dense-ck}
Let $X$ and $Y$ be order-dense sublattices of $C(K_1)$ and $C(K_2)$, respectively. Then
\[
\lat\bigl(\varphi_1(X)\cup \varphi_2(Y)\bigr)
\]
is order dense in $C(K_1)\ast C(K_2)$.
\end{proposition}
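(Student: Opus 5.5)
We work throughout in the representation of Theorem~\ref{thm:CK-join}: $C(K_1)\ast C(K_2)=C(K_1\ast K_2)$ with $\varphi_1(f)=L(f,0)$ and $\varphi_2(g)=L(0,g)$. Order density of a sublattice $Z$ of $C(K)$ means: for every $0<h\in C(K)$ there is $z\in Z$ with $0<z\leq h$. So we fix $0<h\in C(K_1\ast K_2)$ and aim to build such an element of the sublattice generated by $\varphi_1(X)\cup\varphi_2(Y)$. Since $h>0$, there is a point where $h$ is strictly positive. The set $\{[x,y,t]:0<t<1\}$ is open and dense in the join, being homeomorphic to $K_1\times K_2\times(0,1)$. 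Hence we may choose $x_0\in K_1$, $y_0\in K_2$, $t_0\in(0,1)$ and $\varepsilon>0$ with $h\geq 2\varepsilon$ on a neighbourhood of $[x_0,y_0,t_0]$. Shrinking, we may take this neighbourhood to be $\pi(U\times V\times[t_0-\eta,t_0+\eta])$ with $U\subset K_1$, $V\subset K_2$ open and $0<t_0-\eta<t_0+\eta<1$.

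The next step uses order density in the factors. By Urysohn, pick $0<u\in C(K_1)$ supported in $U$ and $0<v\in C(K_2)$ supported in $V$. Order density gives $0<f\leq u$ with $f\in X$ and $0<g\leq v$ with $g\in Y$. After scaling, which is allowed since $X$ and $Y$ are sublattices, we may assume $\|f\|_\infty,\|g\|_\infty$ are as small as we like. The candidate element is
\[
z=\bigl(a\varphi_1(f)\wedge b\varphi_2(g)\bigr)\wedge\bigl(c\varphi_1(f)-d\varphi_2(g)\bigr)^+\wedge\bigl(c'\varphi_2(g)-d'\varphi_1(f)\bigr)^+ ,
\]
or a simpler variant. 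It is chosen so that $z$ vanishes outside $\pi(U\times V\times(0,1))$, because either $f(x)=0$ or $g(y)=0$ there. The $t$-dependence is cut off using the ratio $(1-t)f(x)$ versus $tg(y)$.

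The main obstacle is exactly this localization in the $t$-variable. Where both $f(x)$ and $g(y)$ are positive, $\varphi_1(f)\wedge\varphi_2(g)=\min\{(1-t)f(x),tg(y)\}$, and this need not stay small near $t\in\{0,1\}$, nor be confined to $[t_0-\eta,t_0+\eta]$. We first try simply $z=\delta\,(\varphi_1(f)\wedge\varphi_2(g))$, and note that it already vanishes at $t=0$ and $t=1$ and outside $U\times V$.

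For positivity, fix $x$ with $f(x)>0$ and $y$ with $g(y)>0$; then $z[x,y,t]>0$ for all $t\in(0,1)$, so $z>0$. For the bound $z\leq h$, $h$ need not dominate $z$ away from the region $t\approx t_0$. There are two ways to handle this. One is to choose $[x_0,y_0,t_0]$ from the start so that $h>0$ on a full segment of the form $\pi(\{x_0\}\times\{y_0\}\times[0,1])$-ish; this is false in general. The other, which we adopt, is to multiply the functionals by constants to shift the ratio. The expression $(\alpha\varphi_1(f)-\beta\varphi_2(g))^+\wedge(\gamma\varphi_2(g)-\mu\varphi_1(f))^+$ is nonzero only where $\beta tg(y)<\alpha(1-t)f(x)$ and $\mu(1-t)f(x)<\gamma tg(y)$. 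If in addition $f$ and $g$ are nearly constant on the relevant supports, we can arrange that $f(x)$ and $g(y)$ lie within a factor close to $1$ of fixed values $f_0,g_0$; this is done by replacing $f$ with $(f-\theta)^+$ for suitable $\theta$, which stays in $X$. Then the expression is supported in $t\in[t_0-\eta,t_0+\eta]$, with $t_0$ determined by the coefficients. Finally, multiplying by a small scalar yields $0<z\leq \varepsilon\leq h$, completing the argument.
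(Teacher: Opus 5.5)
Your cutoff $(\alpha\varphi_1(f)-\beta\varphi_2(g))^+\wedge(\gamma\varphi_2(g)-\mu\varphi_1(f))^+$ is the paper's $\lambda=(\lambda_1\wedge\lambda_2)^+$. However, the step meant to confine its $t$-support to $[t_0-\eta,t_0+\eta]$ has a genuine gap.

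At a point with $f(x),g(y)>0$ your cutoff is positive exactly when
\[
\frac{t}{1-t}\in\Big(\frac{\mu f(x)}{\gamma g(y)},\frac{\alpha f(x)}{\beta g(y)}\Big),
\]
so the $t$-window depends on the ratio $f(x)/g(y)$. As $x$ approaches the boundary of $\{f>0\}$, the window slides to $t\to 0$. As $y$ approaches the boundary of $\{g>0\}$, it slides to $t\to 1$. Hence $z$ is strictly positive at points far from $t_0$, where $h$ may vanish; take for instance $h$ a bump supported in $\pi(U\times V\times(t_0-\eta,t_0+\eta))$. Multiplying by a small scalar cannot fix this, since $z\leq h$ forces $z=0$ wherever $h=0$.

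Replacing $f$ by $(f-\theta)^+$ throughout does not help either. That function again tends to $0$ at the edge of its cozero set, so it is not ``nearly constant'' there, and the same drift occurs.

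If you meant $(f-\theta)^+$ only as an extra localizing factor, with the original $f$ kept in the cutoff, the geometry would be right. But $(f-\theta)^+=(f-\theta\one)^+$ need not lie in $X$, because an order dense sublattice need not contain the constants nor be an ideal. For example, $X=\{x\,p(x): p \text{ continuous piecewise linear}\}\subseteq C[0,1]$ is an order dense sublattice. Yet $(x-\theta)^+\notin X$ for $0<\theta<1$, since $(1-\theta/x)^+$ is not piecewise linear.

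The missing idea, which is how the paper proceeds, is to use two different pairs of functions:
\begin{enumerate}
\item Keep $f,g$, with $f(x_0)>0$ and $g(y_0)>0$, to build the cutoff $\lambda$. Choose the coefficients so that at $(x_0,y_0)$ the $t$-window is a small interval around $t_0$.
\item By continuity, pick smaller neighbourhoods $V\ni x_0$ and $W\ni y_0$ on which $f$ and $g$ are bounded below and $f(x)/g(y)$ stays close to $f(x_0)/g(y_0)$. Then for every $(x,y)\in V\times W$ the window stays inside $(t_0-\eta,t_0+\eta)$ and still contains $t_0$.
\item Apply order density a second time, not $(f-\theta)^+$, to get $0<f_2\in X$ and $0<g_2\in Y$ vanishing outside $V$ and $W$.
\item Set $z=c\,\bigl(\varphi_1(f_2)\wedge\varphi_2(g_2)\wedge\lambda\bigr)$ with $c>0$ small.
\end{enumerate}
This $z$ vanishes outside $\pi(V\times W\times(t_0-\eta,t_0+\eta))$. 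It is nonzero at $[x,y,t_0]$ whenever $f_2(x)g_2(y)>0$. It is dominated by $h$ once $c\|\lambda\|_\infty\leq 2\varepsilon$.
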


\begin{proof}
By Theorem~\ref{thm:CK-join}, it is enough to work in $C(K_1\ast K_2)$. Let $0<h\in C(K_1\ast K_2)$. There is a point $[x_0,y_0,t_0]$ with $0<t_0<1$ and $h[x_0,y_0,t_0]>0$. Choose neighborhoods $U_{x_0}$ of $x_0$ and $V_{y_0}$ of $y_0$, together with $\delta>0$, so that $h$ stays positive on
\[
U:=\pi(U_{x_0}\times V_{y_0}\times (t_0-\delta,t_0+\delta)).
\]
Without loss of generality, one can assume $h|_U>1$.
By order density, choose non-zero positive functions $f_1\in X$ and $g_1\in Y$ supported in $U_{x_0}$ and $V_{y_0}$, respectively.

Let us call \[
\lambda_1[x,y,t]=(1-t)f_1(x)+a\Big((1-t)f_1(x)+tg_1(y)\Big ) \]and \[\lambda_2[x,y,t]=tg_1(y)+b\Big ((1-t)f_1(x)+tg_1(y)\Big ) \] for some values $a,b\leq0$ chosen so that, after fixing $x=x_0$, $y=y_0$ and regarding the $\lambda_i$ as function of $t$, the function $0<\lambda:=(\lambda_1\wedge \lambda_2)^+\leq 1$ is such that

\[
\lambda[x_0,y_0,t]=
\begin{cases}
0 & \text{if } 0\leq t<t_0-\delta/2 \\
\lambda_2[x_0,y_0,t] & \text{if } t_0-\delta/2\leq t<t_0 \\

\lambda_1[x_0,y_0,t] & \text{if } t_0\leq t <t_0+\delta/2 \\
0 & \text{if } t_0-\delta/2\leq t\leq 1
\end{cases}
\]

Note that by continuity of $\lambda$, if we take different fixed values of $x,y$ but we keep them sufficiently close to $x_0,y_0$, the support of $\lambda[x,y,t]$ will still lie within $(t_0-\delta,t_0+\delta)$. Suppose, for example, that taking $(x,y)\in V\times W\subset U_{x_0}\times V_{y_0}\subset K\times L$ does the job, and that $\lambda[x,y,t_0]\geq\varepsilon>0$ for every $(x,y)\in V\times W$. Lastly, pick two functions $0<f_2,g_2$ in $X,Y$ that vanish outside $V,W$, respectively. It is clear then that

$$0<F:=\varphi_1(f_2)\wedge \varphi_2(g_2)\wedge\lambda\leq h,$$
because $\lambda\leq 1$ and for a certain $(x,y)\in V\times W$, $\varphi_1(f_2)\wedge \varphi_2(g_2)[x,y,t_0]>0$.
\end{proof}

The following lemma, which can be found in \cite[Lemma 3.1]{o-conv-ck}, will be needed for the next result.

\begin{lemma}\label{lem:order-ck}
For $G\subset C(K)_+$, $\inf G=0$ if and only if for every non-empty open set $U\subset K$ and every $\varepsilon>0$ there exists $x\in U$ and $g\in G$ with $g(x)<\varepsilon$.
\end{lemma}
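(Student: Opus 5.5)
The plan is to prove the two implications separately, working pointwise on $K$ and using Urysohn's lemma. Throughout, $G\subset C(K)_+$. Recall that $\inf G=0$ in $C(K)$ means that $0$ is a lower bound of $G$, which is automatic, and that every $h\in C(K)$ with $h\le g$ for all $g\in G$ satisfies $h\le 0$.

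For the direction ($\Leftarrow$), I argue by contradiction. Suppose $h\in C(K)$ is a lower bound of $G$ with $h\not\le 0$. Then $h(x_0)=2\varepsilon>0$ for some $x_0\in K$. By continuity the set $U=\{x\in K: h(x)>\varepsilon\}$ is a non-empty open set. For every $x\in U$ and every $g\in G$ we get $g(x)\ge h(x)>\varepsilon$, which contradicts the hypothesis applied to $U$ and $\varepsilon$. Hence every lower bound satisfies $h\le 0$, and so $\inf G=0$.

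For the direction ($\Rightarrow$), I also argue by contradiction. Suppose there is a non-empty open set $U$ and some $\varepsilon>0$ such that $g(x)\ge\varepsilon$ for all $x\in U$ and all $g\in G$. Pick $x_0\in U$. By Urysohn's lemma, which applies since $K$ is compact Hausdorff and hence normal, there is a continuous $h:K\to[0,\varepsilon]$ with $h(x_0)=\varepsilon$ and $h=0$ off $U$. Then $h\le g$ for every $g\in G$: on $U$ this holds because $h\le\varepsilon\le g$, and off $U$ because $h=0\le g$. Since $h>0$, this contradicts $\inf G=0$.

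No step is a real obstacle; the only care needed is using the correct meaning of the infimum in the lattice $C(K)$, which is not the pointwise infimum. This is exactly why the condition involves open sets rather than individual points.
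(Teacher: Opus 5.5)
Your proof is correct in both directions. For ($\Leftarrow$), a lower bound $h$ with $h(x_0)=2\varepsilon>0$ produces the non-empty open set $\{h>\varepsilon\}$, on which every $g\in G$ exceeds $\varepsilon$. For ($\Rightarrow$), the Urysohn bump bounded by $\varepsilon$ and vanishing off $U$ is a non-zero positive lower bound of $G$.

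The paper gives no argument of its own; it simply imports the lemma from \cite[Lemma 3.1]{o-conv-ck}. Your self-contained argument is the standard one. It only needs a continuous function separating a point from a closed set, not full normality, so it works verbatim for any completely regular space, not just compact Hausdorff $K$.
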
 

\begin{proposition}\label{prop:regular-ck}
For every compact Hausdorff spaces $K_1$ and $K_2$, the canonical copies of $C(K_1)$ and $C(K_2)$ are regular sublattices of $C(K_1)\ast C(K_2)$.
\end{proposition}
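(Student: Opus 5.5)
Regularity of a sublattice $Y\subseteq X$ means that whenever $G\subseteq Y$ has $\inf G=0$ in $Y$, then also $\inf G=0$ in $X$. By Theorem~\ref{thm:CK-join} we may work in $C(K_1\ast K_2)$, where $\varphi_1(g)=L(g,0)$, so $\varphi_1(g)[x,y,t]=(1-t)g(x)$. We give the argument for $C(K_1)$; the case of $C(K_2)$ is symmetric, replacing $1-t$ by $t$. The tool in both $C(K)$-spaces is Lemma~\ref{lem:order-ck}, which characterizes infima equal to zero in terms of small values on open sets.

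Let $G\subseteq C(K_1)_+$ with $\inf G=0$ in $C(K_1)$. We must check that $\{L(g,0):g\in G\}$ has infimum $0$ in $C(K_1\ast K_2)$, using the criterion of Lemma~\ref{lem:order-ck} on $K_1\ast K_2$. So fix a non-empty open set $W\subseteq K_1\ast K_2$ and $\varepsilon>0$; we need a point $p\in W$ and some $g\in G$ with $L(g,0)(p)<\varepsilon$.

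The key step is to find inside $W$ a product-type open piece. Since $W$ is open, $\pi^{-1}(W)$ is a non-empty open subset of $K_1\times K_2\times[0,1]$. The set $\pi^{-1}(W)\cap (K_1\times K_2\times(0,1))$ is also non-empty, because $\pi^{-1}(W)$ is open and every point of $K_1\times K_2\times\{0,1\}$ is a limit of points with $0<t<1$. Therefore $\pi^{-1}(W)$ contains a basic open box $U\times V\times I$ with $U\subseteq K_1$ and $V\subseteq K_2$ non-empty open sets and $I\subseteq(0,1)$ an open interval. Fix $y\in V$ and $t\in I$.

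Applying Lemma~\ref{lem:order-ck} in $C(K_1)$ to the open set $U$ and the same $\varepsilon$ gives $x\in U$ and $g\in G$ with $g(x)<\varepsilon$. Then the point $p=[x,y,t]$ lies in $W$, and
\[
L(g,0)(p)=(1-t)g(x)\le g(x)<\varepsilon .
\]
By Lemma~\ref{lem:order-ck} in $C(K_1\ast K_2)$, $\inf_{g\in G}\varphi_1(g)=0$ there, so the copy of $C(K_1)$ is regular.

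The only point requiring care is that $W$ may be a neighbourhood of a collapsed point with $t=0$ or $t=1$. This is handled by the density of $K_1\times K_2\times(0,1)$ in the product, which lets us pull back to a genuine box. Everything else is routine.
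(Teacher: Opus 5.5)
Your proof is correct and follows the paper's argument: pass to $C(K_1\ast K_2)$ via Theorem~\ref{thm:CK-join}, apply Lemma~\ref{lem:order-ck} in $C(K_1)$ on the $K_1$-coordinate of a product-shaped open piece $\pi(U\times V\times I)$, and conclude from $(1-t)g(x)\le g(x)<\varepsilon$ and Lemma~\ref{lem:order-ck} on the join. Your step of shrinking $\pi^{-1}(W)$ to a box with $I\subseteq(0,1)$ is slightly more careful than the paper. The paper simply asserts that the sets $\pi(U\times V\times I)$ form a basis, which is not literally true near the collapsed ends, although only the fact you actually prove is needed.
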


\begin{proof}
Identify the free product with $C(K_1\ast K_2)$. Let us write the proof for $C(K_1)$. Suppose that $f_\alpha\downarrow 0$ in $C(K_1)$. By Lemma~\ref{lem:order-ck}, for every $U\subset K_1$ and $\varepsilon>0$ we can find $x_\varepsilon\in U$ and $\alpha_\varepsilon$ such that \[
f_{\alpha_\varepsilon}(x_\varepsilon)<\varepsilon.
\] 
Note that a basis of open sets for $K_1*K_2$ consists of open sets of the form 
\[
\pi(U\times V\times I),
\]
where $U\subset K_1$ and $V\subset K_2$ are open subsets and $I\subset[0,1]$ is relatively open subinterval. So for a basic open set $\pi(U\times V\times I)$ and $\varepsilon>0$, we can choose $x_\varepsilon\in U$ and $\alpha_\varepsilon$ such that $f_{\alpha_\varepsilon}
(x_\varepsilon)<\varepsilon$. Then
\[
\varphi_1(f_{\alpha_\varepsilon})[x_\varepsilon,y,t]=(1-t)f_{\alpha_\varepsilon}(x_\varepsilon)<\varepsilon
\]
for any $y\in V$ and $t\in I$. Now, applying Lemma \ref{lem:order-ck} to the net $\varphi_1(f_\alpha)\downarrow$, we obtain the desired conclusion.
\end{proof}

\begin{proposition}\label{prop:regular-generated}
Let $X$ and $Y$ be Banach lattices. Then $X$ and $Y$ are regular sublattices of
\[
\lat\bigl(\varphi_1(X)\cup \varphi_2(Y)\bigr),
\]
inside $X\ast Y$.
\end{proposition}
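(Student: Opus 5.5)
The plan is to reduce to the $C(K)$ case of Proposition~\ref{prop:regular-ck} by localizing to principal ideals; the step that needs a genuinely new argument is an injectivity claim for the induced map, discussed at the end. By symmetry it suffices to treat $X$. Put $Z=\lat(\varphi_1(X)\cup\varphi_2(Y))$. Regularity means: if $x_\alpha\downarrow 0$ in $X$ and $0\le z\in Z$ satisfies $z\le \varphi_1(x_\alpha)$ for all $\alpha$, then $z=0$.

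\emph{Step 1 (localize).} By Remark~\ref{rem:bigfree-formula}, write $z=\Phi(\varphi_1x_1,\dots,\varphi_1x_n,\varphi_2y_1,\dots,\varphi_2y_m)$ for a lattice-linear expression $\Phi$. Fix $\alpha_0$. Only $\alpha\ge\alpha_0$ matter, and for those $0\le x_\alpha\le x_{\alpha_0}$. Let $e_1\in X_+$ dominate $|x_1|,\dots,|x_n|$ and $x_{\alpha_0}$, and let $e_2\in Y_+$ dominate $|y_1|,\dots,|y_m|$. Normalize so that $\|e_1\|=\|e_2\|=1$. The principal ideals $I_{e_1}\subseteq X$ and $I_{e_2}\subseteq Y$, with their gauge norms, are complete AM-spaces with strong unit. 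By Kakutani, $I_{e_i}\simeq C(K_i)$ lattice isometrically. The inclusions $j_1:C(K_1)\to X$ and $j_2:C(K_2)\to Y$ are then contractive lattice homomorphisms, since $\|x\|\le\|x\|_{e_1}\|e_1\|$.

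\emph{Step 2 (transfer).} Let $J=j_1\overline{\ast}j_2:C(K_1)\ast C(K_2)\to X\ast Y$. Set $z'=\Phi(\varphi_1'x_1,\dots,\varphi_2'y_m)$, computed in $C(K_1)\ast C(K_2)\cong C(K_1\ast K_2)$ via Theorem~\ref{thm:CK-join}. Then $J(z')=z$, and $J(\varphi_1'x_\alpha)=\varphi_1(x_\alpha)$. The net $(x_\alpha)_{\alpha\ge\alpha_0}$ decreases to $0$ in $I_{e_1}=C(K_1)$ as well, because an ideal is regular. So Proposition~\ref{prop:regular-ck} gives $\varphi_1'(x_\alpha)\downarrow 0$ in $C(K_1\ast K_2)$. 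If we knew $z'\le\varphi_1'(x_\alpha)$, we would get $(z')^+=0$, hence $z=J(z')\le 0$, hence $z=0$, as required.

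\emph{Step 3 (main obstacle).} The inequality $z\le\varphi_1(x_\alpha)$ holds in $X\ast Y$, and it must be pulled back to $C(K_1\ast K_2)$. Equivalently, $(z'-\varphi_1'x_\alpha)^+$ is an element of $\lat(\varphi_1'(C(K_1))\cup\varphi_2'(C(K_2)))$ that $J$ sends to $0$, and we need it to be $0$. So the crux is that $J$ is injective on this generated sublattice. Proposition~\ref{prop:embedding-stability} does not apply directly, because the $j_i$ are not isometric.

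I would first try to separate points with lattice homomorphisms. Take a nonzero $w$ in the generated sublattice; then $w$ is nonzero at some point $[x,y,t]$ of the join. Choose $x_0\in K_1$ and $y_0\in K_2$ near $x$ and $y$, in the open sets where the (extensions of the) generators are controlled. The point evaluations $\delta_{x_0}$ and $\delta_{y_0}$ on $I_{e_i}$ are lattice homomorphisms. I would try to extend them to real-valued lattice homomorphisms on $X$ and $Y$, or, if that fails, to approximate them by lattice homomorphisms into $C(K_i)$ defined on suitable principal ideals. Combining these by the universal property, $(1-t)\,\delta_{x_0}\ast t\,\delta_{y_0}$ would give a lattice homomorphism on $X\ast Y$ that does not vanish at $J(w)$.

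If point evaluations do not extend, the fallback is to replace real-valued homomorphisms by the contractive lattice homomorphisms $X\to C(K_1)$ given by truncation, namely $x\mapsto (x\wedge e_1)\vee(-e_1)$. This map is not linear in general, so that route also needs care. This is the step where I expect the real work to lie.
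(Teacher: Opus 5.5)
Your Steps 1 and 2 are exactly the paper's argument. The paper passes to the principal ideals generated by the finitely many entries of $z$ (together with $x_{\alpha_0}$), identifies them with $C(K_x)$ and $C(K_y)$, and asserts that ``the same domination relation holds'' inside $C(K_x\ast K_y)$ before invoking Proposition~\ref{prop:regular-ck}. That assertion is precisely your Step 3, and the paper gives no argument for it. Your reformulation is the right one: it suffices that $J$ be injective on $\lat(\varphi_1'(C(K_1))\cup\varphi_2'(C(K_2)))$, since injective lattice homomorphisms reflect order.

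However, you leave this step unproved, and neither suggested route works. Point evaluations on $I_{e_1}$ need not extend to real-valued lattice homomorphisms on $X$; for $X=L_p[0,1]$, $1\le p<\infty$, the only one is $0$. Homomorphisms defined only on principal ideals cannot be fed into the universal property of $X\ast Y$. The truncation $x\mapsto(x\wedge e_1)\vee(-e_1)$ is not additive. So the argument has a genuine gap, at the same point where the paper is silent.

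One way to close it is to separate with lattice homomorphisms into $L_1$-spaces instead of $\mathbb{R}$.

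\emph{Localize the nonzero value.} Let $0\neq w=\Phi(\varphi_1'f_1,\dots,\varphi_1'f_n,\varphi_2'g_1,\dots,\varphi_2'g_m)$, so $w[\xi,\eta,t]=\Phi((1-t)f_1(\xi),\dots,t\,g_m(\eta))$. Then $|w|\geq c>0$ on some $\pi(U\times V\times I)$, with $U,V$ open and nonempty and $I\subseteq(0,1)$ an interval. Choose nonzero $0\le u\in C(K_1)$ and $0\le v\in C(K_2)$ with $\{u>0\}\subseteq U$ and $\{v>0\}\subseteq V$. Choose positive $h_1\in X^*$, $h_2\in Y^*$ with $h_1(u)>0$ and $h_2(v)>0$.

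\emph{Build the separating homomorphisms.} As in the remark after Proposition~\ref{prop:dense-sublattice}, $h_1,h_2$ induce lattice homomorphisms $P_1:X\to L_1(\mu_1)$ and $P_2:Y\to L_1(\mu_2)$ with $P_1u\neq 0\neq P_2v$. Composing with the band projections onto the bands generated by $P_ie_i$ keeps everything $\sigma$-finite. Then $S_1x=(1-t)\,P_1x(\omega_1)\,P_2e_2(\omega_2)$ and $S_2y=t\,P_1e_1(\omega_1)\,P_2y(\omega_2)$ are lattice homomorphisms into $L_1(\mu_1\times\mu_2\times dt)$.

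\emph{Compute $(S_1\ast S_2)(Jw)$.} By Remark~\ref{rem:bigfree-formula} and positive homogeneity of $\Phi$, almost everywhere
$(S_1\ast S_2)(Jw)=P_1e_1(\omega_1)P_2e_2(\omega_2)\,\Phi\bigl((1-t)\rho_1f_1(\omega_1),\dots,t\,\rho_2g_m(\omega_2)\bigr)$.
Here $\rho_if=P_if/P_ie_i$ is a unital lattice homomorphism $C(K_i)\to L_\infty$. Such maps commute with continuous functional calculus. Hence the joint essential range of $(\rho_1f_1,\dots,\rho_1f_n,\rho_1u)$ lies in $\{(f_1(\xi),\dots,f_n(\xi),u(\xi)):\xi\in K_1\}$. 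So a.e.\ on $\{P_1u>0\}$ the values $\rho_1f_i(\omega_1)$ equal $f_i(\xi)$ for a single point $\xi=\xi(\omega_1)\in U$. Similarly, $\rho_2g_j(\omega_2)=g_j(\eta)$ for some $\eta\in V$.

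\emph{Conclude.} Therefore $|(S_1\ast S_2)(Jw)|\geq c\,P_1e_1(\omega_1)P_2e_2(\omega_2)>0$ on the positive-measure set $\{P_1u>0\}\times\{P_2v>0\}\times I$, so $Jw\neq0$. Applying this to $w=(z'-\varphi_1'x_\alpha)^+$ gives $z'\leq\varphi_1'(x_\alpha)$ for all $\alpha\geq\alpha_0$, and your Step 2 then finishes the proof.
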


\begin{proof}
Suppose $X$ is not a regular sublattice. Then there exist $x_\alpha\downarrow 0$ in $X$ and a non-zero
\[
z\in \lat\bigl(\varphi_1(X)\cup \varphi_2(Y)\bigr)
\]
such that $0<z\leq \varphi_1(x_\alpha)$ for all $\alpha$. Writing $z$ as a lattice expression in finitely many elements of $X$ and $Y$, one may pass to the principal ideals generated by the elements in $X$ together with $x_{\alpha_0}$, and the elements in $Y$. These principal ideals are lattice isomorphic to spaces $C(K_x)$ and $C(K_y)$. Inside
\[
C(K_x)\ast C(K_y)\simeq C(K_x\ast K_y),
\]
the same domination relation holds, contradicting Proposition~\ref{prop:regular-ck}.
\end{proof}

\begin{proposition}
Let $T_i:C(K_i)\to C(L_i)$ be injective lattice homomorphisms for $i=1,2$. Then the induced homomorphism
\[
T_1\overline{\ast}T_2:C(K_1\ast K_2)\to C(L_1\ast L_2)
\]
is injective.
\end{proposition}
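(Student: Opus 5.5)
The plan is to make the induced map explicit through point evaluations and then reduce injectivity to a density statement on $K_1\ast K_2$. Recall the standard description of a lattice homomorphism $T:C(K)\to C(L)$. For each $y\in L$, the functional $\delta_y\circ T$ is a lattice homomorphism into $\mathbb R$, hence of the form $w(y)\delta_{\tau(y)}$. Here $w=T\one_K\ge 0$ is continuous, and $\tau$ is a continuous map defined on the open set $L_0=\{w>0\}$. Moreover, $T$ is injective if and only if $\tau(L_0)$ is dense in $K$: if some nonzero $f$ vanished on $\tau(L_0)$, then $Tf=0$; conversely, if $\tau(L_0)$ is dense, then $Tf=0$ forces $f=0$ on a dense set. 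We apply this to $T_i$, obtaining $w_i$, $L_i^0$, $\tau_i$ with $\tau_i(L_i^0)$ dense in $K_i$.

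Next we compute $S=T_1\overline{\ast}T_2$ pointwise using Theorem~\ref{thm:CK-join}. Fix $[y_1,y_2,t]\in L_1\ast L_2$, and let $u=\delta_{[y_1,y_2,t]}\circ S$, a lattice homomorphism on $C(K_1)\ast C(K_2)$. By the description of the canonical copies in Theorem~\ref{thm:CK-join},
\[
u\circ\varphi_1=(1-t)\,\delta_{y_1}\circ T_1=(1-t)w_1(y_1)\,\delta_{\tau_1(y_1)},\qquad u\circ\varphi_2=t\,w_2(y_2)\,\delta_{\tau_2(y_2)}.
\]
Set $c=(1-t)w_1(y_1)+t\,w_2(y_2)$. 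If $c>0$, then by the uniqueness part of the free product (exactly as in the bijectivity argument of Theorem~\ref{thm:CK-join}), $u=c\,\delta_{[\tau_1(y_1),\tau_2(y_2),s]}$ with $s=t\,w_2(y_2)/c$. Hence
\[
Sf[y_1,y_2,t]=c\,f[\tau_1(y_1),\tau_2(y_2),s].
\]

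It then suffices to show that the set of points $[\tau_1(y_1),\tau_2(y_2),s]$ arising with $c>0$ is dense in $K_1\ast K_2$. Granting this, if $Sf=0$ then $f$ vanishes on a dense set, so $f=0$. The points $[x,z,r]$ with $0<r<1$ are dense in $K_1\ast K_2$, so it is enough to meet every basic open set $\pi(U\times V\times I)$ with $U,V$ nonempty open and $I\subset(0,1)$ an open interval. By density of $\tau_i(L_i^0)$, pick $y_1\in L_1^0$ with $\tau_1(y_1)\in U$ and $y_2\in L_2^0$ with $\tau_2(y_2)\in V$. With $y_1,y_2$ fixed, $c>0$ for all $t\in[0,1]$ since $w_1(y_1),w_2(y_2)>0$. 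The map
\[
t\mapsto s(t)=\frac{t\,w_2(y_2)}{(1-t)w_1(y_1)+t\,w_2(y_2)}
\]
is a continuous increasing bijection of $[0,1]$ onto itself, so some $t$ gives $s(t)\in I$. This produces the required point.

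The only delicate step is the pointwise formula for $S$, in particular identifying $u$ via uniqueness in the free product and handling the degenerate cases $c=0$ or $t\in\{0,1\}$. Those degenerate points are simply discarded, since the density argument uses only points with $c>0$ and $0<s<1$.
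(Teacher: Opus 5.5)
Your proof is correct and follows the paper's own argument. You write each $T_i$ as a weighted composition $w_i(\cdot)\,f(\tau_i(\cdot))$ and identify $T_1\overline{\ast}T_2$ as the weighted composition with weight $(1-t)w_1+tw_2$ and point map $[\tau_1(y_1),\tau_2(y_2),s]$; injectivity then follows from density of that image in $K_1\ast K_2$, and you additionally justify the pointwise formula via uniqueness in the free product (the paper only asserts it) while relying solely on density of $\tau_i(L_i^0)$, thereby avoiding the paper's superfluous claim that the zero set of $\omega_i$ has empty interior, which is not needed and fails for general injective lattice homomorphisms.
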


\begin{proof}
Every injective lattice homomorphism $T_i:C(K_i)\to C(L_i)$ has the form
\[
T_if(x)=\omega_i(x)f(\sigma_i(x)),
\]
where $\omega_i\in C(L_i)_+$ has empty interior zero set and $\sigma_i(\{\omega_i>0\})$ is dense in $K_i$. The induced homomorphism on the join is therefore given by
\[
\omega([x,y,t])=(1-t)\omega_1(x)+t\omega_2(y)
\]
and
\[
\sigma([x,y,t])=
\Bigl[\sigma_1(x),\sigma_2(y),\frac{t\omega_2(y)}{(1-t)\omega_1(x)+t\omega_2(y)}\Bigr]
\]
on the set where $\omega>0$. The interior of $\{\omega=0\}$ is empty because it is contained in
\[
\{\omega_1=0\}\times \{\omega_2=0\}\times (0,1),
\]
and the image of $\sigma$ is dense because it contains $D\times (0,1)$ for a dense set $D\subset K_1\times K_2$. Hence the induced homomorphism is injective.
\end{proof}

\section{Sequece spaces and further examples}
In the previous sections we have come across some classes of spaces that are stable under free products. Being isomorphic to a free product is however quite strong, since by Proposition \ref{prop:embedding-stability} it implies having many copies of $C([0,1])$. For instance, we have the following.
\begin{proposition}
For every measure space $(\Omega,\Sigma,\mu)$ and every $1\leq p\leq \infty$, the Banach lattice $L_p(\mu)$ is not a non-trivial free product.
\end{proposition}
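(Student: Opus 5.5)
Suppose, towards a contradiction, that $L_p(\mu)\simeq X\ast Y$ with $X,Y$ both non-zero; this is what ``non-trivial'' should mean. The plan is to find a lattice copy of $C([0,1])$ inside $X\ast Y$ and check that $L_p(\mu)$ cannot contain one.

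\textbf{Step 1: a copy of $\mathbb{R}\ast\mathbb{R}$.} Since $X$ and $Y$ are non-zero, each contains a one-dimensional closed sublattice. Pick $0<x\in X$ and $0<y\in Y$; then $\mathbb{R}x$ and $\mathbb{R}y$ are closed sublattices isometric to $\mathbb{R}$. Proposition~\ref{prop:embedding-stability} then gives a lattice isometric embedding $\mathbb{R}\ast\mathbb{R}\hookrightarrow X\ast Y$. By Proposition~\ref{prop:Rn} (or Theorem~\ref{thm:CK-join} with one-point spaces), $\mathbb{R}\ast\mathbb{R}\simeq C((S_{\ell_\infty^2})_+)\simeq C([0,1])$ as lattices. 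Hence $L_p(\mu)$ would contain a closed sublattice lattice isomorphic to $C([0,1])$.

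\textbf{Step 2: no copy in $L_p$.} This is where I would split into cases.

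For $1\le p<\infty$, $L_p(\mu)$ is order continuous. Closed sublattices of order continuous Banach lattices are order continuous: an order continuous lattice contains no lattice copy of $\ell_\infty$, and that property passes to sublattices. But $C([0,1])$ is not order continuous, since it contains $c$ and hence a lattice copy of $\ell_\infty$ fails only mildly. To be cleaner, I would argue as follows. In $C([0,1])$, take disjoint positive norm-one functions $f_n$ with supports shrinking to a point. Their partial sums $\sum_{n\le N}f_n$ form an increasing norm-bounded sequence that is not norm Cauchy. In $L_p$ with $p<\infty$, an increasing norm-bounded sequence converges in norm, because $L_p$ is a KB-space. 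Equivalently, disjoint normalized sequences there are equivalent to the $\ell_p$ basis, whereas in $C([0,1])$ they span $c_0$ isometrically. Under a lattice isomorphism, disjointness and these norms are preserved up to constants, which gives the contradiction.

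For $p=\infty$, $L_\infty(\mu)$ is an AM-space, and this is the harder case. Here I would use order completeness rather than order continuity. Order completeness does not pass to arbitrary sublattices, so instead I would use the structure of the free product: by Proposition~\ref{prop:complemented}, $L_\infty(\mu)$ would contain a complemented copy of $X\oplus_1 Y$.

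The main obstacle is making this contradiction rigorous. Two routes seem possible.

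The route I would try first is purely Banach-space theoretic. $L_\infty(\mu)$ is a $C(K)$-space with $K$ extremally disconnected, hence an injective Banach space. Its complemented subspaces are $C(K)$-type spaces with the Grothendieck property. Now $X\oplus_1 Y$ contains $\mathbb{R}\oplus_1\mathbb{R}$, which is harmless by itself. So instead I would look at the projection $P_1$ from Proposition~\ref{prop:complemented}: it exhibits $X$ as a contractive lattice-projection range, hence as a band-like piece, and then $X$ is itself order complete. Doing the same for $Y$, I would then check, via the strong unit argument of Proposition~\ref{prop:strong-unit} (applied to units of $X$ and $Y$), that $X\ast Y$ is $C(K_X\ast K_Y)$ up to isomorphism. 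That space is never Dedekind complete, since $K_X\ast K_Y$ contains arcs $\{[x,y,t]:t\in[0,1]\}$ and so is not extremally disconnected. This contradicts $L_\infty(\mu)$ being Dedekind complete, because Dedekind completeness is preserved under lattice isomorphism.
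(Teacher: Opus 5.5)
Your case $1\leq p<\infty$ is correct and more explicit than the paper's one-line ``which is impossible''. Under the embedding, the partial sums of disjoint norm-one bumps in $C([0,1])$ become an increasing norm-bounded sequence in the KB-space $L_p(\mu)$, so that sequence must converge in norm. Delete the garbled sentence about $c$ and $\ell_\infty$: $C([0,1])$ is separable and contains no copy of $\ell_\infty$.

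Your endgame for $p=\infty$ is the paper's. You identify $X\ast Y$ with $C(K_X\ast K_Y)$ and observe that the arcs $t\mapsto[x,y,t]$ are incompatible with the total disconnectedness forced by order completeness.

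The gap is the step that gets you to $C(K_X\ast K_Y)$. Theorem~\ref{thm:CK-join} requires $X$ and $Y$ to be (lattice isomorphic to) $C(K)$-spaces, i.e.\ to have strong units, and you never show this.
\begin{itemize}
\item You write ``applied to units of $X$ and $Y$'' as if these existed. Proposition~\ref{prop:strong-unit} goes the other way: units of the factors give a unit of the free product.
\item The order completeness of $X$ that you extract from $P_1$ cannot substitute for a strong unit ($\ell_2$ is order complete with no strong unit). It is also never used afterwards.
\item ``Band-like piece'' is wrong: the range of $P_1$ need not even be an ideal. In $C(K_1\ast K_2)$, the function $[x,y,t]\mapsto(1-t)^2$ lies between $0$ and $L(\one,0)$ but is not of the form $L(g,0)$. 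Order completeness of the range holds for a different reason: $P_1$ maps a supremum taken in the big space to the supremum in the range.
\end{itemize}

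The missing step is exactly what the paper does with the projections.
\begin{itemize}
\item Let $u$ be the image of $\one\in L_\infty(\mu)$ in $X\ast Y$; it is a strong unit.
\item Let $U_1:X\ast Y\to X$ be the homomorphism extending $\mathrm{id}_X$ and $0$. Since $U_1\varphi_1=\mathrm{id}_X$, from $|\varphi_1 x|\leq\lambda u$ you get $|x|=U_1|\varphi_1x|\leq\lambda U_1u$. So $U_1u$ is a strong unit of $X$.
\item The gauge norm of a strong unit is an equivalent AM-norm with unit, so Kakutani gives $X\simeq C(K_X)$. Likewise $Y\simeq C(K_Y)$, with $K_X,K_Y\neq\emptyset$ because $X,Y\neq 0$.
\item Applying $\overline{\ast}$ to these lattice isomorphisms and their inverses gives $X\ast Y\simeq C(K_X)\ast C(K_Y)\simeq C(K_X\ast K_Y)$. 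Your arc argument then finishes.
\end{itemize}

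One caveat, shared with the paper's use of ``Stonean'': for a non-localizable $\mu$, $L_\infty(\mu)$ need not be Dedekind complete, only Dedekind $\sigma$-complete. Phrase the contradiction via basically disconnected compacta instead; these are zero-dimensional and so still contain no arcs.
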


\begin{proof}
If $L_p(\mu)\simeq X\ast Y$ with $X,Y\neq 0$, then $X\ast Y$ is infinite-dimensional, so we may assume that $L_p(\mu)$ is infinite-dimensional.

If $1\leq p<\infty$, then Proposition~\ref{prop:embedding-stability} implies that
\[
C([0,1])\simeq \mathbb{R}\ast \mathbb{R}
\]
would embed as a closed sublattice of $L_p(\mu)$, which is impossible.

If $p=\infty$, then $L_\infty(\mu)$ is an $AM$-space with a strong unit, and any free factor must be a closed sublattice. Hence $X$ and $Y$ would be themselves $AM$-spaces. By Proposition~\ref{prop:complemented}, they are ranges of lattice projections from $L_\infty(\mu)$, so the image of the strong unit by the projections is a strong unit in each factor. Therefore there are compact Hausdorff spaces $K$ and $L$ such that
\[
X\simeq C(K),
\qquad
Y\simeq C(L).
\]
Now $L_\infty(\mu)\simeq C(K_\mu)$ for a Stonean compact $K_\mu$, whereas Theorem~\ref{thm:CK-join} gives
\[
C(K_\mu)\simeq L_\infty(\mu)\simeq C(K)\ast C(L)\simeq C(K\ast L).
\]
Since $K\ast L$ is connected while a Stonean compact carrying an infinite-dimensional $C(K)$-space is disconnected, this is impossible.
\end{proof}
Now suppose that $X$ and $Y$ are Banach lattices whose orders are given by unconditional normalized bases $(x_m)$ and $(y_n)$. For $m,n\in \mathbb{N}$, let $X_m$ and $Y_n$ be the finite-dimensional sublattices generated by the first $m$ and $n$ basis vectors, respectively.

\begin{lemma}\label{lemma:kb-finite}
For each $m,n\in \mathbb{N}$, the free product $X_m\ast Y_n$ is lattice isometrically isomorphic to the Banach lattice
\[
Z_{m,n}(X,Y)=
\Bigl\{(f_{i,j})_{i=1,j=1}^{m,n}\subset C([0,1]):
f_{i,j}(0)=f_{k,j}(0)\quad\forall k\neq i,\ f_{i,j}(1)=f_{i,l}(1)\quad\forall j\neq l \Bigr\},
\]
equipped with the coordinatewise order and with norm
\begin{equation}\label{*}\tag{$*$}
\|(f_{i,j})\|
=
\inf\Bigl\{
\Big\|\sum_{i=1}^m a_i x_i\Big\|_X+\Big\|\sum_{j=1}^n b_j y_j\Big\|_Y:
|f_{i,j}|\leq L(a_i,b_j)\ \forall i,j
\Bigr\}.
\end{equation}
\end{lemma}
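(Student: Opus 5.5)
The plan is to pass through $C(K)$-spaces. Since the order of $X$ is given by an unconditional basis, $X_m$ is, as a vector lattice, $\mathbb{R}^m$ with the coordinatewise order, i.e. $C(\{1,\dots,m\})$, equipped with the lattice norm $\|\sum a_ix_i\|_X$. Likewise $Y_n\simeq C(\{1,\dots,n\})$. The norms differ from the sup norms only by constants, so $X_m\ast Y_n$ and $C(\{1,\dots,m\})\ast C(\{1,\dots,n\})$ coincide as vector lattices with equivalent norms. Indeed, the free product of lattice isomorphic factors is lattice isomorphic, by applying the universal property in both directions. By Theorem~\ref{thm:CK-join}, the underlying vector lattice is therefore $C(\{1,\dots,m\}\ast\{1,\dots,n\})$.

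The first step is to identify this join with the space of tuples in $Z_{m,n}$. The join of two finite discrete sets is the complete bipartite graph: $m\cdot n$ copies of $[0,1]$, indexed by $(i,j)$, glued at $t=0$ whenever they share $i$ and at $t=1$ whenever they share $j$. A continuous function on it is exactly a family $(f_{i,j})\subset C([0,1])$ with $f_{i,j}(0)$ independent of $j$ and $f_{i,j}(1)$ independent of $i$. Here one must match the indexing convention with the definition of $Z_{m,n}$, possibly swapping the roles of $0$ and $1$ or reparametrizing $t\mapsto 1-t$. Under this identification the canonical copies become
\[
\varphi_1(\textstyle\sum a_ix_i)=(L(a_i,0))_{i,j},\qquad \varphi_2(\textstyle\sum b_jy_j)=(L(0,b_j))_{i,j},
\]
as in Theorem~\ref{thm:CK-join}.

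It remains to show that the free product norm equals the quantity $p$ in \eqref{*}, following the scheme of Proposition~\ref{prop:Rn} and Theorem~\ref{thm:CK-join}.

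\emph{Upper bound.} Suppose $|f_{i,j}|\le L(a_i,b_j)$, where we may take $a_i,b_j\ge0$ since $|L(a,b)|\le L(|a|,|b|)$. Then $|f|\le \varphi_1(\sum a_ix_i)+\varphi_2(\sum b_jy_j)$. Since the norm is a lattice norm and the $\varphi_k$ are isometric (Remark~\ref{rem:isometric-copies}), this gives $\|f\|_\ast\le \|\sum a_ix_i\|+\|\sum b_jy_j\|$, hence $\|f\|_\ast\le p(f)$.

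\emph{Lower bound.} First check that $p$ is a lattice norm. Solidity is clear. The triangle inequality holds by adding the dominating coefficients. Definiteness follows from $\|f\|_\infty\le \max(a_i,b_j)$ and equivalence of norms in finite dimensions. Next, $p(\varphi_1(\sum c_ix_i))\le \|\sum c_ix_i\|_X$, taking $a_i=|c_i|$ and $b=0$ and using unconditionality with constant one. The same holds for $\varphi_2$. Proposition~\ref{prop:maximal-norm} then gives $p\le\|\cdot\|_\ast$ on the sublattice generated by the copies. By density, and since $p$ is equivalent to the sup norm, which the free product norm also dominates, this extends to the whole space.

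The main obstacle is the careful topological identification of the finite join with the glued intervals, together with making sure the infimum in \eqref{*} is taken with the correct coefficient convention. Nonnegativity of $a_i,b_j$ is harmless here, and unconditionality with constant one is what makes $\|\sum |c_i|x_i\|=\|\sum c_ix_i\|$; this is assumed to be implicit in ``orders given by the basis''.
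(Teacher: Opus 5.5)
Your proposal is correct and is essentially the paper's proof. Both pass to $C(D_m\ast D_n)$ (with $D_m,D_n$ discrete of sizes $m,n$) via Theorem~\ref{thm:CK-join}, read the join as the complete bipartite graph with canonical copies $(L(a_i,0))_{i,j}$ and $(L(0,b_j))_{i,j}$, obtain $\|\cdot\|_\ast\le p$ from $L(a_i,b_j)=L(a_i,0)+L(0,b_j)$, and obtain $p\le\|\cdot\|_\ast$ from Proposition~\ref{prop:maximal-norm}. You are slightly more careful than the paper in using only that $p$ is equivalent to $\|\cdot\|_\infty$: the paper's bound $p\le 2\|\cdot\|_\infty$ fails, for instance, for $X=Y=\ell_1$ and $f\equiv 1$, where $p(f)=m+n$.

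On your hedge about conventions: the gluing you derived ($f_{i,j}(0)$ independent of $j$, $f_{i,j}(1)$ independent of $i$) is the correct one. It matches $L(a_i,b_j)$ and the later definition of $Z(X,Y)$, so the conditions printed in the statement simply have the indices interchanged. Reparametrizing $t\mapsto 1-t$ would not be a legitimate fix, since it turns the constraint in \eqref{*} into $|f_{i,j}|\le L(b_j,a_i)$.
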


\begin{proof}
Let $D_m=\{p_1,\dots,p_m\}$ and $D_n=\{q_1,\dots,q_n\}$ be finite discrete compact spaces. Since $X_m\simeq C(D_m)$ and $Y_n\simeq C(D_n)$ as Banach lattices, Theorem~\ref{thm:CK-join} yields
\[
X_m\ast Y_n\simeq C(D_m\ast D_n).
\]
The join $D_m\ast D_n$ can be represented as the complete bipartite graph $K_{m,n}$, where each edge is replaced by a copy of $[0,1]$, so every continuous function on it can be identified with an array $(f_{i,j})$ of continuous functions on $[0,1]$ satisfying the compatibility conditions $f_{i,j}(0)=f_{k,j}(0)$ $\forall k\neq i$ and $\ f_{i,j}(1)=f_{i,l}(1)$ $\forall j\neq l $.

Under this identification, the element $\sum_{i=1}^m a_i x_i\in X_m$ corresponds to the family $(L(a_i,0))_{i,j}$, and $\sum_{j=1}^n b_j y_j\in Y_n$ corresponds to $(L(0,b_j))_{i,j}$. The formula \eqref{*} defines a lattice norm such that 
\[
\|(f_{i,j=1}^{m,n})\|_\infty\leq\|(f_{i,j=1}^{m,n})\|\leq2\|(f_{i,j=1}^{m,n})\|_\infty,
\]
and it agrees with the original norms on these two canonical copies by the same argument used in Theorem~\ref{thm:CK-join}. Hence Proposition~\ref{prop:maximal-norm} shows that this is exactly the free product norm.
\end{proof}

For each element of $Z_{m,n}(X,Y)$, the infimum in \eqref{*} is attained by compactness.

Let $Z(X,Y)$ be the set of all double sequences $(f_{i,j})_{i,j\in\mathbb N }$ of functions in $C([0,1])$ such that:
\begin{enumerate}
\item for each fixed $i$, the sequence $(f_{i,j}(0))_j$ is constant;
\item for each fixed $j$, the sequence $(f_{i,j}(1))_i$ is constant;
\item there exist $\sum_{i=1}^\infty a_i x_i\in X$ and $\sum_{j=1}^\infty b_j y_j\in Y$ with $a_i,b_j\geq 0$ such that
\[
|f_{i,j}|\leq L(a_i,b_j)
\qquad (i,j\in \mathbb{N}).
\]
\end{enumerate}
Equip $Z(X,Y)$ with the lattice norm
\[
\|(f_{m,n})\|
=
\inf\Bigl\{
\Big\|\sum_{i=1}^\infty a_i x_i\Big\|_X+\Big\|\sum_{j=1}^\infty b_j y_j\Big\|_Y:
|f_{i,j}|\leq L(a_i,b_j)\ \forall i,j
\Bigr\}.
\]

The spaces $Z_{m,n}(X,Y)$ embed lattice isometrically into $Z(X,Y)$ by completing with identically zero functions. Moreover, the coordinate truncations $P_{m,n}:Z(X,Y)\to Z_{m,n}(X,Y)$ are contractive lattice homomorphisms.

Let us prove that $Z(X,Y)$ is a Banach lattice by showing that every absolutely convergent series converges. Let $(f^{(k)})=((f_{m,n})^{(k)}))_k$ be a sequence in $Z(X,Y)$ such that \[
\sum_{k=1}^\infty\|f^{(k)}\|<\infty
\]  Choose dominating series $a^{(k)}=\sum_{i=1}^\infty a_i^{(k)}x_i$ and $b^{(k)}=\sum_{j=1}^\infty b_j^{(k)}y_j$ for each $f^{(k)}$ such that
\[
\|a^{(k)}\|_X+\|b^{(k)}\|_Y\leq\|f^{(k)}\|+\frac{1}{2^k}
\]
Therefore
\[
\sum_{k=1}^\infty \|a^{(k)}\|,\quad\sum_{k=1}^\infty \|b^{(k)}\|<\infty,
\]
so
\[
\sum_{k=1}^\infty a^{(k)}=:\sum_{i=1}^\infty a_ix_i=a,\quad \sum_{k=1}^\infty b^{(k)}=:\sum_{j=1}^\infty b_jy_j=b,
\]
converge respectively in $X$ and $Y$. Now fix $i,j\in \mathbb N$. Observe that $a_i=\sum_{k=1}^\infty a_i^{(k)}$, $b_j=\sum_{k=1}^\infty b_j^{(k)}$, and \[
\|f_{i,j}^{(k)}\|_\infty\leq a_i^{(k)}+b_j^{(k)}\implies\sum_{k=1}^\infty \|f_{i,j}^{(k)}\|\leq a_i+b_j<\infty.
\]
So for every $i,j\in\mathbb N$ we have an absolutely convergent series
\[
f_{i,j}:=\sum_{k=1}^\infty f_{i,j}^{(k)}\in C([0,1]).
\]
Note that since each summand $f^{(k)}$ satisfies $(1),(2)$, so does the series $f=(f_{i,j})$. Also, since 
\[
|f_{i,j}|\leq \sum_{k=1}^\infty|f_{i,j}^{(k)}|\leq L(a_i,b_j),
\]
 $a\in X,b\in Y$ are dominant sequences for $f=(f_{i,j})$. Therefore $f\in Z(X,Y)$, and the fact that $f^{(k)}\to f$ is straightforward. We then have the following Proposition.

\begin{proposition}\label{prop:seqfreeprod}
If $X$ and $Y$ are Banach lattices which orders are given by 1-unconditional normalized bases, then $X\ast Y$ is lattice isometrically isomorphic to the Banach lattice $Z(X,Y)$ just defined.
\end{proposition}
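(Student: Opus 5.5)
The plan is to verify that $Z(X,Y)$, with the natural inclusions, satisfies the universal property of the free product; uniqueness of the free product then gives the lattice isometry. Define $\Phi_1:X\to Z(X,Y)$ by $\Phi_1(\sum a_ix_i)=(L(a_i,0))_{i,j}$ and $\Phi_2(\sum b_jy_j)=(L(0,b_j))_{i,j}$. These are lattice homomorphisms, since the order is coordinatewise and $L$ is linear and positive. They are contractive by the choice of dominating sequences $(|a_i|)$, $0$. They are in fact isometric: evaluating at $t=0$ forces $a_i'\ge |a_i|$, and 1-unconditionality then gives $\|\sum a_i'x_i\|\ge\|\sum a_ix_i\|$.

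For the universal property, take lattice homomorphisms $T_1:X\to W$ and $T_2:Y\to W$ with $\max\|T_i\|\le 1$. By Lemma~\ref{lemma:kb-finite} we have $X_m\ast Y_n\simeq Z_{m,n}(X,Y)$ isometrically. Restricting $T_1,T_2$ to $X_m,Y_n$ therefore gives contractive lattice homomorphisms $S_{m,n}:Z_{m,n}(X,Y)\to W$. Compatibility follows from uniqueness in the free product. Composing $Z_{m,n}\hookrightarrow Z_{m',n'}$ (zero extension) with $S_{m',n'}$ yields $S_{m,n}$, because both extend the same maps on $X_m,Y_n$; here one uses that the zero-extension embedding is a lattice homomorphism sending the canonical copies to the canonical copies. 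So we obtain a contraction $S$ on $\bigcup_{m,n}Z_{m,n}$, and we would like to define $T(f)=\lim_{m,n} S(P_{m,n}f)$.

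The main obstacle is density: is $\bigcup Z_{m,n}$ dense in $Z(X,Y)$, i.e. does $\|f-P_{m,n}f\|\to 0$? Here $P_{m,n}$ is meant as truncation composed with re-embedding. Truncation as literally defined breaks conditions (1) and (2) at the boundary. I would first try to replace it by a modified truncation $\widetilde P_{m,n}f=(f_{i,j}\wedge L(a_i,b_j)\cdot\chi)$, where $\chi$ kills the indices $i>m$ or $j>n$. Concretely: if $|f|\le L(a,b)$ with $\|a\|+\|b\|$ close to $\|f\|$, then $g=f-\widetilde P_{m,n}f$ is dominated by $L(a\mathbb 1_{i>m},b)$ on the rows $i>m$ and by $L(a,b\mathbb 1_{j>n})$ on the columns $j>n$. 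Finer, one should dominate $g$ by $L(a\mathbb 1_{i>m},b\mathbb 1_{j>n})$ plus an error term that respects the vertex identifications at $t=0,1$. To achieve this, modify $f_{i,j}$ for $i\le m$, $j>n$ by the linear correction $f_{i,j}(0)(1-t)$, which is admissible since it is $\Phi_1$ of an element of $X_m$. The goal is to get $\|f-h\|\le \|\sum_{i>m}a_ix_i\|+\|\sum_{j>n}b_jy_j\|+\varepsilon$ for some $h\in Z_{m,n}$, plus terms of the form $|f_{i,j}(0)|$ which are controlled by $a_i$. The tails tend to zero because the bases are Schauder bases. This tail estimate is where I expect the real work, and I would handle it first.

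Given density, $S$ extends to a contractive lattice homomorphism $T:Z(X,Y)\to W$ with $T\Phi_k=T_k$. For elements of $X$ this uses $\Phi_1(\sum_{i\le m}a_ix_i)\to\Phi_1(\sum a_ix_i)$, which holds by isometry. Uniqueness follows because $\operatorname{lat}(\Phi_1X\cup\Phi_2Y)\supseteq\bigcup Z_{m,n}$ is dense. The norm bound $\|T\|\le\max\|T_k\|$ follows by scaling. Hence $Z(X,Y)\simeq X\ast Y$ lattice isometrically. As an alternative to the universal-property argument, one can apply Proposition~\ref{prop:maximal-norm} together with Proposition~\ref{prop:embedding-stability} on the dense union: each $Z_{m,n}$ sits isometrically in both spaces, so the norms agree on a common dense sublattice.
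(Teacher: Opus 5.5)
The step you flag as the main obstacle cannot be carried out, because it is false. The union $\bigcup_{m,n}Z_{m,n}(X,Y)$ is not dense in $Z(X,Y)$, and the statement itself fails for $Z(X,Y)$ as defined by (1)--(3). Two observations show this.

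\emph{First observation.} If $|G_{i,j}|\le L(a_i',b_j')$ for all $i,j$, then by 1-unconditionality $\|G_{i,j}\|_\infty\le\max\{a_i',b_j'\}\le\|\sum_i a_i'x_i\|_X+\|\sum_j b_j'y_j\|_Y$. Hence $\|G\|\ge\sup_{i,j}\|G_{i,j}\|_\infty$.

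\emph{Second observation.} Every $F\in\lat(\Phi_1X\cup\Phi_2Y)$ has the form $F_{i,j}(t)=\Psi\bigl((1-t)u_1(i),\dots,(1-t)u_k(i),\,tv_1(j),\dots,tv_l(j)\bigr)$, where $\Psi$ is a lattice-linear expression, $u_s\in X$, $v_s\in Y$, and $u(i)$, $v(j)$ are basis coordinates. Basis coordinates tend to $0$, and $\Psi$ is Lipschitz and positively homogeneous. So for each fixed $i$, $F_{i,j}\to(1-t)F_{i,1}(0)$ uniformly on $[0,1]$ as $j\to\infty$. By the first observation this property survives norm limits.

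Conditions (1)--(3) do not enforce this property. For $S\subseteq\mathbb N$ let $F^S_{1,j}(t)=t(1-t)$ if $j\in S$, $F^S_{1,j}=0$ if $j\notin S$, and $F^S_{i,j}=0$ for $i\ge 2$. These satisfy (1) and (2) trivially, and (3) with $a=x_1$, $b=0$. Yet $F^{\mathbb N}$ lies at distance at least $1/6$ from $\lat(\Phi_1X\cup\Phi_2Y)$. Moreover $\|F^S-F^{S'}\|\ge 1/4$ whenever $S\neq S'$, so $Z(X,Y)$ is non-separable. On the other hand, $X\ast Y$ is separable: rational lattice-linear combinations of the $\varphi_1(x_i)$, $\varphi_2(y_j)$ are dense by Proposition~\ref{prop:dense-sublattice}. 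So no isomorphism $X\ast Y\simeq Z(X,Y)$ exists at all.

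In particular, your target estimate $\|f-h\|\le\|\sum_{i>m}a_ix_i\|+\|\sum_{j>n}b_jy_j\|+\varepsilon$ fails for $f=F^{\mathbb N}$, where both tails vanish once $m\ge1$. On the edges $(i,j)$ with $i\le m$, $j>n$, the corrected difference $f_{i,j}-(1-t)f_{i,j}(0)$ is only bounded by $2(1-t)a_i+tb_j$. Here $a_i$ is not a tail coefficient, and nothing in the definition gives uniformity in $j$. The alternative you sketch at the end relies on the same density.

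The paper's own proof simply asserts that $\bigcup_{m,n}Z_{m,n}(X,Y)$ is dense in $Z(X,Y)$, so it has the same gap. There is no missing idea there for you to recover.

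There is also a problem with the connecting maps. You use zero extension for $Z_{m,n}\hookrightarrow Z_{m',n'}$, and the paper uses it for $Z_{m,n}\hookrightarrow Z(X,Y)$. Zero extension violates the vertex conditions unless $f$ vanishes at the vertices, and it does not send canonical copies to canonical copies. The correct embedding, the one induced by Proposition~\ref{prop:embedding-stability}, is:
\begin{itemize}
\item keep $f_{i,j}$ for $i\le m$, $j\le n$;
\item put $(1-t)f_{i,1}(0)$ on the edges with $i\le m$, $j>n$;
\item put $tf_{1,j}(1)$ on the edges with $i>m$, $j\le n$;
\item put $0$ elsewhere.
\end{itemize}
By 1-unconditionality this is a lattice isometry.

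With these maps your universal-property argument goes through, and so does the paper's completion argument. What it proves, however, is only that $X\ast Y$ is lattice isometric to the closure of the union of these images. That closure is a proper closed sublattice of $Z(X,Y)$.

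An intrinsic description of that closure needs asymptotic conditions beyond (1)--(3). The uniform convergence above is necessary but not sufficient in general. For $Y=\ell_1$, take spikes of height $1/j$, supported in $[1-\frac1j,1]$ and peaking at $t=1-\frac{1}{2j}$, on the edges $(1,j)$. This gives an element that satisfies the uniform-convergence condition yet lies at distance at least $1$ from $\lat(\Phi_1X\cup\Phi_2Y)$.
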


\begin{proof}
The unions $\bigcup_{m,n}(X_m\ast Y_n)$ and $\bigcup_{m,n}Z_{m,n}(X,Y)$ are dense sublattices of $X\ast Y$ and $Z(X,Y)$, respectively. By Lemma ~\ref{lemma:kb-finite}, these dense sublattices are lattice isometrically isomorphic. Passing to completions gives the result.
\end{proof}

\begin{proposition}\label{prop:order-dense-unconditional}
If the order on $X$ and $Y$ is given by unconditional bases, then
\[
\lat\bigl(\varphi_1(X)\cup \varphi_2(Y)\bigr)
\]
is order dense in $X\ast Y$. In particular, $X$, $Y$ are regular sublattices of $X\ast Y$.
\end{proposition}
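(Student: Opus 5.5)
We will work in the concrete model of Proposition~\ref{prop:seqfreeprod}, identifying $X\ast Y$ with $Z(X,Y)$, where $\varphi_1(\sum a_ix_i)=(L(a_i,0))_{i,j}$ and $\varphi_2(\sum b_jy_j)=(L(0,b_j))_{i,j}$. We may renormalise so that the bases are $1$-unconditional: an equivalent lattice norm changes neither order density nor regularity. Let $0<h\in Z(X,Y)$ be given. Our goal is to find $0<F\in\lat(\varphi_1(X)\cup\varphi_2(Y))$ with $F\le h$.

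\textbf{Localising.} Since $h>0$, some coordinate $h_{i_0,j_0}$ is nonzero. By continuity there is $t_0\in(0,1)$ and $\delta>0$ with $(t_0-\delta,t_0+\delta)\subset(0,1)$ and $h_{i_0,j_0}\ge c>0$ on that interval. Only one coordinate function needs to be controlled. Working on an open edge interior point avoids the gluing conditions at $t=0,1$, which is the advantage over the general $C(K)$ case in Proposition~\ref{prop:order-dense-ck}.

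\textbf{Building $F$.} We use the generators $u=\varphi_1(x_{i_0})$ and $v=\varphi_2(y_{j_0})$. These vanish in every coordinate except row $i_0$ (respectively column $j_0$). On the $(i_0,j_0)$ coordinate they equal $1-t$ and $t$, and on the other coordinates of their row or column they are $(1-t)$ or $t$. Then $u\wedge v$ is supported exactly on the $(i_0,j_0)$ edge and equals $\min(1-t,t)$ there. To localise in $t$ we mimic the construction of Proposition~\ref{prop:order-dense-ck}. Take
\[
\lambda=\big((u-\alpha(u+v))\wedge(v-\beta(u+v))\big)^+,
\]
where $u+v=1$ on the edge, so on that edge $\lambda=((1-t-\alpha)\wedge(t-\beta))^+$. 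Choosing $\beta=t_0-\delta/2$ and $\alpha=1-t_0-\delta/2$ makes $\lambda$ a tent supported in $(t_0-\delta/2,t_0+\delta/2)$. Finally set
\[
F=(u\wedge v)\wedge\lambda\wedge\varepsilon(u+v)
\]
with small $\varepsilon\le c$, so that $0<F\le h$ on the $(i_0,j_0)$ coordinate.

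\textbf{Main obstacle and regularity.} The key step to verify is that $F$ vanishes in every other coordinate, for otherwise $F\le h$ could fail there. This is exactly why we intersect with $u\wedge v$: for $(i,j)\ne(i_0,j_0)$ one of $u_{i,j}$ or $v_{i,j}$ is identically zero. This relies on the identification of $\varphi_1,\varphi_2$ in $Z(X,Y)$ being coordinatewise.

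Regularity then follows from order density. We use the standard fact that an order dense (majorizing-free) sublattice is regular (\cite{burkinshaw}), together with Proposition~\ref{prop:regular-generated}, which says that $X$ and $Y$ are regular in $\lat(\varphi_1(X)\cup\varphi_2(Y))$. Regularity is transitive, so $X$ and $Y$ are regular in $X\ast Y$.
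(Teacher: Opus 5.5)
Your proposal is correct and follows the paper's argument: you pass to the model $Z(X,Y)$, fix a coordinate $(i_0,j_0)$ where $h$ is nonzero, and bound $h$ from below by a lattice-linear expression in $\varphi_1(x_{i_0})$ and $\varphi_2(y_{j_0})$; regularity then comes from order density together with Proposition~\ref{prop:regular-generated}. Your explicit tent construction, and your check that $F$ vanishes on every coordinate other than $(i_0,j_0)$, supply details the paper omits when it simply asserts that a suitable piecewise linear $\Phi$ exists. The same holds for the transitivity of regularity, which the paper also leaves implicit.
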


\begin{proof}
Using the representation $X\ast Y\simeq Z(X,Y)$ in Proposition \ref{prop:seqfreeprod}, let $0<f=(f_{m,n})\in Z(X,Y)$. There exist indices $i,j$ such that $f_{i,j}>0$. Choose a positive piecewise linear function $\Phi(L(1,0),L(0,1))$ with
\[
0<\Phi(L(1,0),L(0,1))\leq f_{i,j}.
\]
Then
\[
0<\Phi(\varphi_1(x_i),\varphi_2(y_j))\leq f,
\]
which proves order density. The regularity statement follows from Proposition~\ref{prop:regular-generated}.
\end{proof}
\section{$p$-Convexity}\label{sec:pconvex}

Recall that a Banach lattice $X$ is \emph{$p$-convex} with constant $C$ if
\[
\Big\|\Big(\sum_{k=1}^n |x_k|^p\Big)^{1/p}\Big\|
\leq
C\Big(\sum_{k=1}^n \|x_k\|^p\Big)^{1/p}
\]
for all $x_1,\dots,x_n\in X$ when $1\leq p<\infty$, and
\[
\Big\|\bigvee_{k=1}^n |x_k|\Big\|
\leq
C\max_{1\leq k\leq n}\|x_k\|
\]
when $p=\infty$.

If $E$ is a Banach space and $T:E\to X$ is a linear operator, we write $M^{(p)}(T)$ for the least constant $M$ such that
\[
\Big\|\Big(\sum_{k=1}^n |Tx_k|^p\Big)^{1/p}\Big\|
\leq
M\Big(\sum_{k=1}^n \|x_k\|^p\Big)^{1/p}
\]
for all finite families $(x_k)\subseteq E$ when $1\leq p<\infty$, and analogously for $p=\infty$.

\begin{proposition}\label{prop:pconvex-factorization}
Let $1\leq p\leq \infty$, let $E_1$ and $E_2$ be Banach spaces, let $X$ be a Banach lattice, and let $T_i:E_i\to X$ be $p$-convex operators. Set
\[
M=\max\{M^{(p)}(T_1),M^{(p)}(T_2)\}.
\]
Then there exist a $p$-convex Banach lattice $W$ with $p$-convexity constant $1$, contractions $R_i:E_i\to W$, and a lattice homomorphism $\gamma:W\to X$ such that
\[
T_i=\gamma R_i
\qquad (i=1,2)
\]
and
\[
\|\gamma\|\leq 2^{1-1/p}M.
\]
\end{proposition}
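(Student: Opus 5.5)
The plan is to adapt the factorization scheme of \cite{factorpconvex} to two operators at once, building $W$ as a renorming of a sublattice of $X$. Let $I\subseteq X$ be the ideal generated by $T_1(E_1)\cup T_2(E_2)$; it consists of the $x$ dominated by some $(\sum_k|T_1u_k|^p+\sum_l|T_2v_l|^p)^{1/p}$, with the obvious modification when $p=\infty$. For $x\in I$ define
\[
\rho(x)=\inf\Big\{\Big(\sum_k\|u_k\|^p+\sum_l\|v_l\|^p\Big)^{1/p}:\ |x|\leq \Big(\sum_k|T_1u_k|^p+\sum_l|T_2v_l|^p\Big)^{1/p}\Big\},
\]
the infimum running over finite families $u_k\in E_1$, $v_l\in E_2$.

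First I check the basic properties of $\rho$. It is solid and positively homogeneous, and $\rho(T_ie)\leq\|e\|$ for $e\in E_i$. It is also $p$-convex with constant $1$: given $x_1,\dots,x_n$ with near-optimal dominating families, concatenating the families dominates $(\sum_j|x_j|^p)^{1/p}$, so $\rho\big((\sum_j|x_j|^p)^{1/p}\big)^p\leq\sum_j\rho(x_j)^p+\varepsilon$.

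The key lower estimate comes from splitting a dominating expression into its $T_1$-part $F$ and $T_2$-part $G$. Then $(F^p+G^p)^{1/p}\leq F+G$, and $p$-convexity of each $T_i$ gives $\|F\|\leq MA$ and $\|G\|\leq MB$, where $A^p=\sum_k\|u_k\|^p$ and $B^p=\sum_l\|v_l\|^p$. Hence
\[
\|x\|_X\leq M(A+B)\leq 2^{1-1/p}M(A^p+B^p)^{1/p},
\]
and taking the infimum yields $\|x\|_X\leq 2^{1-1/p}M\rho(x)$. This is exactly where the constant $2^{1-1/p}M$ comes from, and it also shows that $\rho$ separates points.

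The main obstacle is that $\rho$ need not obviously satisfy the triangle inequality. From $|x+y|\leq F_x+F_y$, $p$-convexity with constant $1$ only gives $\rho(x+y)\leq 2^{1-1/p}(\rho(x)^p+\rho(y)^p)^{1/p}$. I would handle this as in \cite{factorpconvex}: replace $\rho$ by the gauge $\tilde\rho$ of the solid convex hull of $\{\rho\leq 1\}$. This $\tilde\rho$ is the largest lattice seminorm below $\rho$; it still satisfies $\|x\|_X\leq 2^{1-1/p}M\tilde\rho(x)$, because $\|\cdot\|_X/(2^{1-1/p}M)$ is itself a lattice norm below $\rho$. The point to verify is that the convexification preserves $p$-convexity with constant $1$. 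I would check this through Krivine's functional calculus, applied to elements $x_j=\sum_r\lambda_{jr}z_{jr}$ with $\rho(z_{jr})\leq1$, using the convexity of $(s_1,\dots,s_n)\mapsto(\sum_j|s_j|^p)^{1/p}$ to dominate $(\sum_j|x_j|^p)^{1/p}$ by a suitable convex combination of $p$-sums. If this step resists, the fallback is to take $W$ to be the $p$-convexification of the construction of \cite{factorpconvex} applied to the single operator $T_1\oplus T_2$ on $E_1\oplus_p E_2$, where the triangle inequality is already handled there.

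Finally, set $W$ to be the completion of $(I,\tilde\rho)$, which is a Banach lattice, and let $R_i:E_i\to W$ be $R_ie=T_ie$; these are contractions since $\tilde\rho(T_ie)\leq\|e\|$. Let $\gamma:W\to X$ be the continuous extension of the formal identity $I\hookrightarrow X$. It is a lattice homomorphism because $I$ carries the lattice operations of $X$, and $\|\gamma\|\leq 2^{1-1/p}M$ by the lower estimate above. Then $\gamma R_i=T_i$ by construction, and $p$-convexity with constant $1$ passes from $I$ to its completion by continuity.
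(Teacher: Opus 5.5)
Your construction is the paper's. Your $\rho$ is the Minkowski functional of the paper's set $S$, and $R_i$ and $\gamma$ are the same formal maps. Your lower estimate (split a dominating $p$-sum into its $T_1$- and $T_2$-parts, then use $M(A+B)\leq 2^{1-1/p}M(A^p+B^p)^{1/p}$) is exactly the paper's computation. The paper does not verify the Banach lattice structure or the $p$-convexity of $W$; it defers both to \cite{factorpconvex}.

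The one point to fix is the step you flag as the main obstacle. It is not an obstacle, but as written you leave it open, since the $p$-convexity of $\tilde\rho$ is only sketched. In fact $\rho$ already satisfies the triangle inequality, and this follows from the $p$-convexity estimate for $\rho$ that you did prove. Let $\rho(x)<a$ and $\rho(y)<b$ with $a,b>0$, and put $\lambda=a/(a+b)$. Convexity of $s\mapsto s^p$, applied in Krivine's calculus, gives
\[
|x+y|\leq (a+b)\Big(\lambda\frac{|x|}{a}+(1-\lambda)\frac{|y|}{b}\Big)\leq (a+b)\Big(\Big|\frac{\lambda^{1/p}x}{a}\Big|^p+\Big|\frac{(1-\lambda)^{1/p}y}{b}\Big|^p\Big)^{1/p}.
\]
Solidity and $p$-convexity with constant $1$ then yield $\rho(x+y)\leq (a+b)(\lambda+1-\lambda)^{1/p}=a+b$. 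For $p=\infty$, use maxima instead of $p$-sums. Equivalently, $S$ is convex: $|\lambda y_1+(1-\lambda)y_2|\leq(\lambda F_1^p+(1-\lambda)F_2^p)^{1/p}$ is a single $p$-sum over the families rescaled by $\lambda^{1/p}$ and $(1-\lambda)^{1/p}$. The paper uses this implicitly when it takes the Minkowski functional of $\overline S$.

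Hence $\tilde\rho=\rho$, and both the convexification and its unverified $p$-convexity can be dropped. Your fallback also works as stated and needs no further convexification. The same splitting estimate gives $M^{(p)}(T_1\oplus T_2)\leq 2^{1-1/p}M$ for $T_1\oplus T_2:E_1\oplus_pE_2\to X$. Composing the one-operator factorization with the isometric inclusions $E_i\hookrightarrow E_1\oplus_pE_2$ then produces the contractions $R_i$.
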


\begin{proof}
We treat the case $1\leq p<\infty$; the case $p=\infty$ is analogous, replacing $p$-sums by suprema.

Let $S$ be the set of all $y\in X$ satisfying
\[
|y|\leq
\Big(\sum_{j=1}^m |T_1u_j|^p+\sum_{k=1}^n |T_2v_k|^p\Big)^{1/p}
\]
for some finite families $(u_j)\subseteq E_1$ and $(v_k)\subseteq E_2$ with
\[
\sum_{j=1}^m\|u_j\|^p+\sum_{k=1}^n\|v_k\|^p\leq 1.
\]
Let $\|\cdot\|_W$ be the Minkowski functional of the closed solid set $\overline{S}$ and let
\[
W=\{y\in X:\ \|y\|_W<\infty\}.
\]
As in the proof of \cite[Theorem~3]{factorpconvex}, $W$ is a Banach lattice and is $p$-convex with constant $1$.

Define $R_i:E_i\to W$ by
\[
R_i x=T_i x,
\qquad i=1,2,
\]
regarding $T_i x$ as an element of $W$. These maps are contractions because $T_i(B_{E_i})\subseteq S$. Let $\gamma:W\to X$ be the formal inclusion. Then $\gamma$ is a lattice homomorphism and $T_i=\gamma R_i$.

It remains to estimate $\gamma$. Let $z\in W$ and choose $\lambda>\|z\|_W$. Then $z\in \lambda \overline{S}$, so by definition of $S$ there exist finite families $(u_j)\subseteq E_1$ and $(v_k)\subseteq E_2$ such that
\[
|z|\leq
\Big(\sum_{j=1}^m |T_1u_j|^p+\sum_{k=1}^n |T_2v_k|^p\Big)^{1/p}
\]
and
\[
\sum_{j=1}^m\|u_j\|^p+\sum_{k=1}^n\|v_k\|^p\leq \lambda^p.
\]
Hence
\begin{align*}
\|z\|_X
&\leq
\Big\|
\Big(\sum_{j=1}^m |T_1u_j|^p+\sum_{k=1}^n |T_2v_k|^p\Big)^{1/p}
\Big\| \\
&\leq
\Big\|\Big(\sum_{j=1}^m |T_1u_j|^p\Big)^{1/p}\Big\|
+
\Big\|\Big(\sum_{k=1}^n |T_2v_k|^p\Big)^{1/p}\Big\| \\
&\leq
M\Big(\sum_{j=1}^m \|u_j\|^p\Big)^{1/p}
+
M\Big(\sum_{k=1}^n \|v_k\|^p\Big)^{1/p} \\
&\leq
2^{1-1/p}M
\Big(
\sum_{j=1}^m \|u_j\|^p+\sum_{k=1}^n \|v_k\|^p
\Big)^{1/p} \\
&\leq 2^{1-1/p}M\lambda.
\end{align*}
Taking the infimum over $\lambda>\|z\|_W$ yields
\[
\|\gamma z\|_X\leq 2^{1-1/p}M\|z\|_W.
\]
\end{proof}

\begin{corollary}\label{corollary:factFPLp} In the setting of last proposition, we can choose \[
W=\FBL^{(p)}[E_1\oplus_1E_2] 
\]
and 
$R_i=\delta_i$, where $\delta_i$ is the embedding 
\[
E_i\overset{\iota_i}{\rightarrow}E_1\oplus_1E_2\overset{\delta}{\rightarrow} \FBL^{(p)}[E_1\oplus_1 E_2] \qquad (i=1,2).
\]

\end{corollary}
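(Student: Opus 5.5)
The plan is to show that the factorization in Proposition~\ref{prop:pconvex-factorization} can be routed through the free $p$-convex Banach lattice. Concretely, we take the space $W$, the contractions $R_i$ and the homomorphism $\gamma$ constructed there, and produce a lattice homomorphism $\widehat{\gamma}:\FBL^{(p)}[E_1\oplus_1E_2]\to X$ with $\widehat\gamma\delta_i=T_i$ and $\|\widehat\gamma\|\leq 2^{1-1/p}M$.

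First, define the linear map
\[
R:E_1\oplus_1E_2\to W,\qquad R(x_1,x_2)=R_1x_1+R_2x_2 .
\]
Since each $R_i$ is a contraction and the domain carries the $\ell_1$-norm, $\|R\|\leq \max\{\|R_1\|,\|R_2\|\}\leq 1$. Also $R\iota_i=R_i$.

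Second, $W$ is $p$-convex with constant $1$. The universal property of $\FBL^{(p)}[E_1\oplus_1E_2]$ recalled in the preliminaries therefore gives a lattice homomorphism
\[
\widehat R:\FBL^{(p)}[E_1\oplus_1E_2]\to W
\]
with $\widehat R\delta=R$ and $\|\widehat R\|\leq M^{(p)}(W)\|R\|\leq 1$.

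Third, set $\widehat\gamma=\gamma\widehat R$. This is a lattice homomorphism, being a composition of two, and it satisfies
\[
\widehat\gamma\delta_i=\gamma\widehat R\delta\iota_i=\gamma R\iota_i=\gamma R_i=T_i .
\]
Moreover $\|\widehat\gamma\|\leq\|\gamma\|\|\widehat R\|\leq 2^{1-1/p}M$. The maps $\delta_i=\delta\iota_i$ are isometric embeddings, hence contractions, because $\delta$ and $\iota_i$ are isometries.

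The only delicate point is the norm bookkeeping in the universal property of $\FBL^{(p)}$. We must make sure the extension constant is $M^{(p)}(W)=1$, so that no extra factor enters. This is exactly why Proposition~\ref{prop:pconvex-factorization} was stated with $W$ having $p$-convexity constant $1$. With that in hand, the argument is purely categorical.
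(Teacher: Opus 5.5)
Your proposal is correct and follows the paper's proof exactly. Both extend the contraction $R=R_1\oplus R_2$ on $E_1\oplus_1E_2$ to a lattice homomorphism $\widehat R:\FBL^{(p)}[E_1\oplus_1E_2]\to W$ with $\|\widehat R\|\le M^{(p)}(W)\|R\|\le 1$, and take $\gamma\widehat R$ as the factoring homomorphism; you simply write out the verifications of $\gamma\widehat R\,\delta_i=T_i$ and of the norm bound, which the paper leaves implicit.
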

\begin{proof}
The linear operator 
\[
R:=R_1\oplus R_2:E_1\oplus_1 E_2\rightarrow W
\]
extends to a contractive lattice homomorphism
\[
\widehat{R}:\FBL^{(p)}[E_1\oplus_1 E_2]\rightarrow W.
\]
Taking \[
\widehat T:=\gamma\widehat R
\]
does the job.
\end{proof}

\begin{proposition}\label{prop:pconvex-freeproduct}
Let $1\leq p\leq \infty$. If $X_1$ and $X_2$ are $p$-convex Banach lattices with constant $1$, then $X_1\ast X_2$ is $p$-convex with constant $2^{1-1/p}$.
\end{proposition}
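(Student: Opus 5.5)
The plan is to apply Proposition~\ref{prop:pconvex-factorization} with $E_i=X_i$, $X=X_1\ast X_2$ and $T_i=\varphi_i$, and then use the universal property of the free product to show that the identity of $X_1\ast X_2$ factors through a $1$-convex-constant lattice $W$ with a lattice homomorphism of norm at most $2^{1-1/p}$.

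First we estimate $M^{(p)}(\varphi_i)$. Since $\varphi_i$ is a lattice isometric embedding (Remark~\ref{rem:isometric-copies}), for $x_1,\dots,x_n\in X_i$ we have $\bigl(\sum|\varphi_i x_k|^p\bigr)^{1/p}=\varphi_i\bigl((\sum|x_k|^p)^{1/p}\bigr)$, because lattice homomorphisms commute with the Krivine functional calculus. Its norm therefore equals $\|(\sum|x_k|^p)^{1/p}\|_{X_i}\le(\sum\|x_k\|^p)^{1/p}$, since $X_i$ has $p$-convexity constant $1$. Hence $M=1$. Proposition~\ref{prop:pconvex-factorization} then yields a $p$-convex lattice $W$ with constant $1$, contractions $R_i:X_i\to W$ and a lattice homomorphism $\gamma:W\to X_1\ast X_2$ with $\|\gamma\|\le 2^{1-1/p}$ and $\varphi_i=\gamma R_i$.

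The key point, and the main obstacle, is that we need $R_i$ to be \emph{lattice homomorphisms} in order to use the universal property. In the construction $W\subseteq X_1\ast X_2$ is a sublattice with a stronger norm and $R_i$ is just $\varphi_i$ viewed into $W$. So $R_i$ is a lattice homomorphism, since $W$ carries the order and lattice operations inherited from $X_1\ast X_2$ and $\gamma$ is the formal inclusion. The universal property then gives a lattice homomorphism $R=R_1\ast R_2:X_1\ast X_2\to W$ with $\|R\|\le 1$. Then $\gamma R\varphi_i=\varphi_i$ for $i=1,2$, so $\gamma R=\mathrm{id}$ by uniqueness.

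Finally, for $z_1,\dots,z_n\in X_1\ast X_2$ we use again that $\gamma$ is a lattice homomorphism:
\[
\Big\|\Big(\sum|z_k|^p\Big)^{1/p}\Big\|_\ast=\Big\|\gamma\Big(\sum|Rz_k|^p\Big)^{1/p}\Big\|_\ast\le 2^{1-1/p}\Big\|\Big(\sum|Rz_k|^p\Big)^{1/p}\Big\|_W\le 2^{1-1/p}\Big(\sum\|z_k\|_\ast^p\Big)^{1/p}.
\]
The last inequality uses the $1$-convexity of $W$ and $\|R\|\le1$. For $p=\infty$ the same chain applies with suprema, giving constant $2$. One should double check that $W$ is complete and that $R$ indeed lands in $W$; both follow from the construction in Proposition~\ref{prop:pconvex-factorization} and the universal property.
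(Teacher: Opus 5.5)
Your argument is correct, but it takes a more direct route than the paper.

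\textbf{The paper's route.} It applies Corollary~\ref{corollary:factFPLp} to factor through the free $p$-convex Banach lattice $\FBL^{(p)}[X_1\oplus_1 X_2]$. It then passes to the quotient $Z=\FBL^{(p)}[X_1\oplus_1X_2]/J$ by the ideal that forces the canonical maps $\psi_i$ to be lattice homomorphisms, so that $Z$ is the free product in the $p$-convex category. Finally it uses the universal properties of both $Z$ and $X_1\ast X_2$ to show that $T:Z\to X_1\ast X_2$ and $\psi:X_1\ast X_2\to Z$ are mutually inverse, with $\|T\|\,\|\psi\|\le 2^{1-1/p}$.

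\textbf{Your route.} You use the internal construction of Proposition~\ref{prop:pconvex-factorization}, not just its statement. There, $W$ is the ideal of $X_1\ast X_2$ determined by the closed solid convex set $\overline{S}$, it carries the inherited lattice operations, and $\gamma$ is the formal inclusion. Hence, when the $T_i$ are lattice homomorphisms, the contractions $R_i$ are already lattice homomorphisms into $W$. This lets you apply the universal property of $X_1\ast X_2$ directly with target $W$. The identity $\gamma R=\mathrm{id}$ then shows that $W=X_1\ast X_2$ as vector lattices, with
$\|z\|_W\le\|z\|_\ast\le 2^{1-1/p}\|z\|_W$.
Your final chain of inequalities is then valid, since $\gamma$ commutes with the $p$-sum functional calculus.

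\textbf{Comparison.} Your version avoids $\FBL^{(p)}$, the quotient $Z$, and the fact that quotients preserve the $p$-convexity constant. It also exhibits the $p$-convex renorming explicitly on $X_1\ast X_2$ itself. The paper's version gives extra structural information, recorded in the subsequent remark: the free product in the category of $p$-convex Banach lattices is isomorphic to $X_1\ast X_2$. Its template also transfers verbatim to any setting where a factorization through a free object is available, as in the upper $p$-estimate case via $\FBL^{\uparrow p}$. Yours, by contrast, needs a factorization whose intermediate space is concretely an ideal of the target lattice.
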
 

\begin{proof}
Let
\[
\varphi_1:X_1\to X_1\ast X_2,
\qquad
\varphi_2:X_2\to X_1\ast X_2
\]
be the canonical inclusions. Since lattice homomorphisms commute with the lattice functional calculus, both $\varphi_1$ and $\varphi_2$ are $p$-convex with constant $1$. We also have the linear embeddings
\[
\delta_1:X_1\to \FBL^{(p)}[X_1\oplus_1 X_2],
\qquad
\delta_2:X_2\to \FBL^{(p)}[X_1\oplus_1 X_2].
\]
Taking the quotient by the closed ideal $J$ generated by the elements of the form \[
|\delta_1x_1|-\delta_1|x_1|,\qquad |\delta_2x_2|-\delta_2|x_2|
\]
of $\FBL^{(p)}[X_1\oplus_1 X_2]$ and composing the $\delta_i$ with the quotient map, we obtain contractive lattice homomorphisms \[ 
\psi_1:X_1\to Z:=\FBL^{(p)}[X_1\oplus_1 X_2]/J, \qquad\psi_2:X_2\to Z.
\]

Applying Corollary ~\ref{corollary:factFPLp} to $T_1=\varphi_1$ and $T_2=\varphi_2$, we obtain a lattice homomorphism $\widehat T$ inducing another lattice homomorphism $T:Z\to X_1\ast X_2$ such that
\[
\varphi_i=T \psi_i
\qquad (i=1,2).
\] 
and
\[
\|T\|\leq 2^{1-1/p}.
\]

By the universal property of the free product, there exists a contractive lattice homomorphism
\[
\psi:X_1\ast X_2\to Z
\]
with $\psi\varphi_i=\psi_i$ for $i=1,2$. The situation is summarized in this commutative diagram

\[
\xymatrix{& Z \\ & X_1*X_2 \ar[u]^\psi \\ X_1 \ar@/^1.5pc/@[black][ruu]^{\psi_1} \ar[ru]^{\varphi_1} \ar[r]^{\psi_1} & Z \ar[u]^{T} & X_2 \ar[lu]_{\varphi_2} \ar[l]_{\psi_2} \ar@/_1.5pc/@[black][luu]_{\psi_2}}
\]
Then
\[
 T \psi\varphi_i= T \psi_i=\varphi_i
\qquad (i=1,2),
\]
so uniqueness in the free product gives $T \psi=\mathrm{id}_{X_1\ast X_2}$.

On the other hand,
\[
\psi T \psi_i=\psi\varphi_i=\psi_i
\qquad (i=1,2).
\]
But $Z$ is generated as a Banach lattice by the ranges of $\psi_1$ and $\psi_2$, so the latter identities imply
\[
\psi T=\mathrm{id}_{Z}.
\]
Hence $T$ is a lattice isomorphism from $Z$ onto $X_1\ast X_2$, with inverse $\psi$. Since $Z$ is $p$-convex with constant $1$, we conclude that $X_1\ast X_2$ is $p$-convex with constant at most
\[
\|T\|\,\|\psi\|\leq 2^{1-1/p}.
\]
\end{proof}

\begin{remark}
Naturally, the Banach lattice $\FBL^{(p)}[X_1\oplus_1 X_2]/J$ in last proof is nothing but the free product of $X_1$ and $X_2$ in the subcategory of $p$-convex Banach lattices. Proposition \ref{prop:pconvex-freeproduct} tells us that it is isomorphic to the usual free product $X_1*X_2$, with a constant growing to infinity with the number of factors. That constant is in fact sharp: consider for instance $\bigfree_{i=1}^n\mathbb{R}\simeq C((S_{\ell_\infty^n})_+)$. The function \[
(\sum_{i=1}^n \delta_{e_i}^p)^{1/p}(t_1,\ldots,t_n)=(\sum_{i=1}^n t_i^p)^{1/p}
\]
has free product norm $n$ (see Proposition \ref{prop:Rn}), while \[
(\sum_{i=1}^n\|\delta_{e_i}\|^p)^{1/p}=n^{1/p}.
\]
This implies that the $p$-convexity constant grows to infinity with the number $n$ of free factors, and when $n=2$, it cannot be smaller than $2^{1-1/p}$ in general, so the constant in Proposition \ref{prop:pconvex-freeproduct} is sharp.
\end{remark}

In direct analogy with $p$-convexity, a Banach lattice $X$ is said to satisfy an \emph{upper $p$-estimate} with constant $C>0$ if
\[
\Big\|\bigvee_{k=1}^n |x_k|\Big\|
\leq
C\Big(\sum_{k=1}^n \|x_k\|^p\Big)^{1/p}
\]
for every finite family $x_1,\dots,x_n\in X$.

\begin{proposition}
Let $1<p<\infty$. If $X_1$ and $X_2$ satisfy upper $p$-estimates with constant $1$, then $X_1\ast X_2$ also satisfies an upper $p$-estimate.
\end{proposition}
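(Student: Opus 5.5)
We will mimic the proof of Proposition~\ref{prop:pconvex-freeproduct}, replacing $p$-convexity by upper $p$-estimates throughout. First, we establish an analogue of Proposition~\ref{prop:pconvex-factorization}. Let $T_i:E_i\to X$ be operators satisfying upper $p$-estimates, meaning
\[
\Big\|\bigvee_k |T_ix_k|\Big\|\leq M\Big(\sum_k\|x_k\|^p\Big)^{1/p}.
\]
Let $S$ be the set of $y\in X$ with
\[
|y|\leq \bigvee_{j}|T_1u_j|\vee\bigvee_k|T_2v_k|,
\qquad \sum_j\|u_j\|^p+\sum_k\|v_k\|^p\leq 1.
\]
This $S$ is not convex, so we take $W$ to be the Minkowski functional space of the closed solid convex hull of $S$, following the upper $p$-estimate factorization in \cite{GarciaSanchezLeungTaylorTradacete2025UpperP}. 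There, one shows that the resulting lattice satisfies an upper $p$-estimate with some universal constant $C_p$; for $1<p<\infty$ the convexification costs only a constant. The maps $R_i x=T_ix$ are then contractive into $W$.

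For the bound on the formal inclusion $\gamma:W\to X$, let $z$ be a convex combination of elements of $S$ dominated as above. We split each supremum into its $E_1$ part and its $E_2$ part, using $|a\vee b|\leq|a|+|b|$. This gives $\|z\|_X\leq 2M$, and hence $\|\gamma\|\leq 2M$. This step is routine, because norms are convex.

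Next, as in Corollary~\ref{corollary:factFPLp}, we replace $W$ by the free Banach lattice with upper $p$-estimates $\FBL^{\uparrow p}[X_1\oplus_1X_2]$ from \cite{JardonLaustsenTaylorTradaceteTroitsky2022Convexity}. Its universal property extends $R_1\oplus R_2$ with a constant depending only on the upper $p$-estimate constant of $W$. Let $J$ be the closed ideal generated by $|\delta_ix_i|-\delta_i|x_i|$, and set $Z=\FBL^{\uparrow p}[X_1\oplus_1X_2]/J$. Quotients by closed ideals preserve upper $p$-estimates with the same constant, since the quotient map is a lattice homomorphism and any finite family lifts with arbitrarily close norms. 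So $Z$ satisfies an upper $p$-estimate.

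Now apply the factorization to $T_i=\varphi_i$. The canonical inclusions satisfy upper $p$-estimates with constant $1$, because $X_i$ does and $\varphi_i$ is an isometric lattice homomorphism. This yields a bounded lattice homomorphism $T:Z\to X_1\ast X_2$ with $T\psi_i=\varphi_i$. On the other side, the universal property of the free product gives $\psi:X_1\ast X_2\to Z$ with $\psi\varphi_i=\psi_i$. The same uniqueness and density argument as in Proposition~\ref{prop:pconvex-freeproduct} shows that $T\psi=\mathrm{id}$ and $\psi T=\mathrm{id}$. Since a lattice isomorphism transfers upper $p$-estimates with constant at most $\|T\|\|\psi\|$ times the constant of $Z$, this concludes the proof.

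The main obstacle is the first step. The set $S$ is not convex, so the Minkowski functional of its plain closure does not give a norm. One must check that passing to the convex solid hull still yields an upper $p$-estimate lattice. The fallback is to avoid the hands-on construction: use only that $\FBL^{\uparrow p}[E]$ exists and that every operator into a lattice with upper $p$-estimates extends boundedly. Applying this to $\varphi_1\oplus\varphi_2:X_1\oplus_1X_2\to X_1\ast X_2$ requires first knowing that $X_1\ast X_2$ has an upper $p$-estimate, which is circular. So the factorization through $W$ is genuinely needed. If convexity cannot be controlled directly, an alternative route is the known equivalence, for $1<p<\infty$, between upper $p$-estimates and $r$-convexity for all $r<p$ up to renorming, combined with Proposition~\ref{prop:pconvex-freeproduct}. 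This route is only heuristic, though, since the constants must be tracked carefully.
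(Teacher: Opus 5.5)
Your overall architecture matches the paper's. You factor through $\FBL^{\uparrow p}[X_1\oplus_1X_2]$, pass to the quotient by $J$, and rerun the two-sided uniqueness argument of Proposition~\ref{prop:pconvex-freeproduct}; the quotient and transfer steps are fine. The gap is the one non-formal step, which you never establish: that the lattice $W$ built from the closed solid convex hull of $S$ satisfies an upper $p$-estimate. You attribute this vaguely to \cite{GarciaSanchezLeungTaylorTradacete2025UpperP}, and in your last paragraph you concede it is unchecked. Without it, the universal property of $\FBL^{\uparrow p}$ cannot be applied to $R_1\oplus R_2$, and nothing downstream follows.

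Your $r$-convexity fallback cannot repair this. It would only give $r$-convexity of $X_1\ast X_2$ for $r<p$, which does not imply an upper $p$-estimate. Nor is the direct route circular. The paper observes that $\varphi_1\oplus\varphi_2$ is $(p,\infty)$-convex and applies \cite[Corollary~9.37]{freebanlat}. That result factors $(p,\infty)$-convex operators into an \emph{arbitrary} Banach lattice through $\FBL^{\uparrow p}$, so no upper estimate on $X_1\ast X_2$ is presupposed and no $W$ needs to be built.

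If you want to keep your self-contained route, the missing step is true, even with constant $1$, and follows by duality. Put $V=\bigcup_{\lambda>0}\lambda\,\mathrm{co}(S)$ with the gauge $\|\cdot\|_V$ of $\mathrm{co}(S)$. This is a lattice norm, since convex hulls of solid sets are solid and $\|\cdot\|_X\le 2M\|\cdot\|_V$. For $0\le f\in V^*$ one has $\|f\|_{V^*}=\sup_{s\in S}f(|s|)$.

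Now take $0\le f_1,\dots,f_n\in V^*$ and pick $s_i\in S$ with $f_i(|s_i|)\ge(1-\varepsilon)\|f_i\|_{V^*}$. Merge the defining families of the $s_i$, rescaled by weights $c_i\ge0$ with $\sum_ic_i^p=1$, into a single $z\in S$ with $z\ge c_i|s_i|$ for every $i$. Positivity of the $f_i$ yields $\|\sum_if_i\|_{V^*}\ge(\sum_if_i)(z)\ge(1-\varepsilon)\sum_ic_i\|f_i\|_{V^*}$. Optimizing over the $c_i$ gives $\|\sum_if_i\|_{V^*}\ge(\sum_i\|f_i\|_{V^*}^{p'})^{1/p'}$.

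Next take $w_1,\dots,w_n\in V$ and $0\le f$ in the unit ball of $V^*$. The Riesz--Kantorovich formula $f(\bigvee_i|w_i|)=\sup\{\sum_if_i(|w_i|):f_i\ge0,\ \sum_if_i=f\}$ together with H\"older gives $\|\bigvee_i|w_i|\|_V\le(\sum_i\|w_i\|_V^p)^{1/p}$. This estimate passes to $W$ by approximation in the norm of $X$. With it in place, $\widehat R$ is contractive and your proof is complete. In effect, you would be reproving the special case of \cite[Corollary~9.37]{freebanlat} that the paper simply cites.
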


\begin{proof}
Consider the operator
\[
T=\varphi_1\oplus \varphi_2:X_1\oplus_1 X_2\to X_1\ast X_2.
\]
Since $\varphi_1$ and $\varphi_2$ are lattice homomorphisms and the factors satisfy upper $p$-estimates, the map $T$ is $(p,\infty)$-convex in the sense used in \cite[Corollary~9.37]{freebanlat}. Therefore that result yields a factorization
\[
X_1\oplus_1 X_2 \xrightarrow{\ \delta\ } \FBL^{\uparrow p}[X_1\oplus_1 X_2] \xrightarrow{\ \widehat{T}\ } X_1\ast X_2
\]
through a free object in the category of Banach lattices with upper $p$-estimates; see also \cite{p-convex-renorming-factorization}.

Let $J$ be the closed ideal of $\FBL^{\uparrow p}[X_1\oplus_1 X_2]$ generated by
\[
|\delta(x_1,0)|-\delta(|x_1|,0)
\qquad\text{and}\qquad
|\delta(0,x_2)|-\delta(0,|x_2|),
\]
and set
\[
W=\FBL^{\uparrow p}[X_1\oplus_1 X_2]/J.
\]
Since quotients preserve upper $p$-estimates, $W$ satisfies an upper $p$-estimate as well. The quotient map induces canonical lattice homomorphisms
\[
\psi_1:X_1\to W
\qquad\text{and}\qquad
\psi_2:X_2\to W,
\]
and the argument used in Proposition~\ref{prop:pconvex-freeproduct} shows that $W$ is lattice isomorphic to $X_1\ast X_2$. Hence $X_1\ast X_2$ inherits an upper $p$-estimate.
\end{proof}

\section{Free Factors of Free Banach Lattices}\label{sec:freefactors}

In this Section, we study free factors of free Banach lattices, with particular focus on the case $\FBL[\ell_1]$.

\begin{proposition}
Let $E$ be a Banach space such that $E$ and $\ell_1(E)$ are linearly isomorphic. Then $\FBL[E]$ is linearly isomorphic to $\ell_1(\FBL[E])$.
\end{proposition}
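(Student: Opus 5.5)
The plan is to use Pełczyński's decomposition method. The key input is that $\FBL[E]$ and $\ell_1(\FBL[E])$ embed complementably into one another, and that $\ell_1(\FBL[E])$ is isomorphic to its own $\ell_1$-sum.

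\textbf{Step 1: $\ell_1(\FBL[E])$ is complemented in $\FBL[E]$.} Since $E\simeq \ell_1(E)=(\bigoplus_{n\in\mathbb N}E)_{\ell_1}$, we have $\FBL[E]\simeq \FBL[\ell_1(E)]$ lattice isomorphically. An isomorphism $T:E\to F$ induces a lattice isomorphism $\overline{T}$ with inverse $\overline{T^{-1}}$, by the functoriality of $\FBL$ recalled in the preliminaries. Proposition~\ref{prop:fbl-preserves} then gives
\[
\FBL[E]\simeq \FBL\Big[\Big(\bigoplus_{n\in\mathbb N}E\Big)_{\ell_1}\Big]\simeq \bigfree_{n\in\mathbb N}\FBL[E].
\]
By Proposition~\ref{prop:complemented}, the closed linear span of the canonical copies of the factors is linearly isometric to $\ell_1(\FBL[E])=(\bigoplus_n \FBL[E])_{\ell_1}$ and is complemented in the free product. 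Hence $\ell_1(\FBL[E])$ is isomorphic to a complemented subspace of $\FBL[E]$. Write $\FBL[E]\simeq \ell_1(\FBL[E])\oplus G$ for some Banach space $G$.

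\textbf{Step 2: $\FBL[E]$ is complemented in $\ell_1(\FBL[E])$.} This is trivial, since $\FBL[E]$ is a single coordinate and the coordinate projection is contractive. Write $\ell_1(\FBL[E])\simeq \FBL[E]\oplus H$.

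\textbf{Step 3: decomposition.} Set $X=\FBL[E]$ and $Y=\ell_1(X)$. Since $\ell_1(\ell_1(X))\simeq\ell_1(X)$ (reindex $\mathbb N\times\mathbb N$), we have $Y\simeq \ell_1(Y)$, and therefore $Y\oplus Y\simeq Y$. It follows that
\[
X\simeq Y\oplus G\simeq Y\oplus Y\oplus G\simeq Y\oplus X.
\]
In the other direction,
\[
Y\simeq \ell_1(X\oplus H)\simeq \ell_1(X)\oplus\ell_1(H)\simeq \ell_1(X)\oplus\ell_1(X)\oplus \ell_1(H)\simeq Y\oplus \ell_1(X\oplus H)\simeq Y\oplus Y\simeq Y,
\]
so this chain is not directly informative, and the standard route is used instead:
\[
Y\simeq \ell_1(Y)\simeq \ell_1(X\oplus H)\simeq \ell_1(X)\oplus \ell_1(H)\simeq X\oplus \ell_1(X)\oplus\ell_1(H)\simeq X\oplus \ell_1(X\oplus H)\simeq X\oplus Y.
\]
Here we split off one coordinate of $\ell_1(X)$. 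Combining the two gives $X\simeq X\oplus Y\simeq Y$, which is the claim.

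The only nonroutine point is Step 1. It requires that the lattice isomorphism $\FBL[E]\simeq\FBL[\ell_1(E)]$ be merely isomorphic and not isometric, which suffices here since the conclusion is linear isomorphism. It also requires that Proposition~\ref{prop:complemented} applies to the infinite free product, which it does for an arbitrary index set. The rest is bookkeeping of direct sums.
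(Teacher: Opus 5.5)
Your proof is correct and follows the paper's argument: you identify $\FBL[E]$ with $\bigfree_{n}\FBL[E]$ via Proposition~\ref{prop:fbl-preserves}, extract the complemented copy of $\ell_1(\FBL[E])$ from Proposition~\ref{prop:complemented}, and conclude by Pe\l czy\'nski's decomposition method, which you carry out by hand where the paper cites it. The second chain in your Step~3 can be shortened to $Y=\ell_1(X)\simeq X\oplus \ell_1(X)=X\oplus Y$, by splitting off one coordinate.
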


\begin{proof}
By Proposition~\ref{prop:fbl-preserves}, we have the linear isomorphism
\[
\FBL[E]\simeq \bigfree_{n=1}^\infty \FBL[E].
\]
Proposition~\ref{prop:complemented} shows that the closed linear span of the canonical copies of $\FBL[E]$ inside the countable free product is linearly isometric to $\ell_1(\FBL[E])$ and complemented there. Hence $\ell_1(\FBL[E])$ is complemented in $\FBL[E]$. Pe\l czy\'nski's decomposition method \cite[Theorem~2.2.3]{albiackalton} now yields the linear isomorphism
\[
\FBL[E]\simeq \ell_1(\FBL[E]).
\]
\end{proof}
As a consequence, $\FBL[\ell_1]\simeq \ell_1(\FBL[\ell_1])$ as Banach spaces, and $\FBL[\FBL[\ell_1]]\simeq \bigfree_{i=1}^\infty\FBL[\FBL[\ell_1]]$ as Banach lattices.

We also have a Pe\l czy\'nski's decomposition method for free products of Banach lattices.

\begin{proposition}\label{prop:pelczynski-freeproducts}
Suppose we have Banach lattices $X$, $Y$, $X_1$, $Y_1$ and lattice isomorphisms
\[
X\simeq Y\ast X_1,
\qquad
Y\simeq X\ast Y_1,
\qquad
X\simeq \bigfree_{n=1}^\infty X.
\]
Then $X\simeq Y$.
\end{proposition}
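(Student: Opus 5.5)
The plan is to transcribe Pe\l czy\'nski's decomposition method into the free product setting. Free products play the role of direct sums, and the associativity and commutativity Proposition of Section~\ref{sec:construction} plays the role of the rearrangement of summands. Throughout we only use that all the operations involved are compatible with lattice isomorphisms. Namely, if $A\simeq A'$ and $B\simeq B'$ are lattice isomorphisms, then $A\ast B\simeq A'\ast B'$, and similarly for countable free products provided the isomorphisms are uniformly bounded. This follows from the universal property: the isomorphisms $T$ and $T^{-1}$ induce $\overline{\bigfree}T_i$ and $\overline{\bigfree}T_i^{-1}$, whose compositions are identities by uniqueness. The uniform bound matters for the countable case, but there we will only take countably many copies of one fixed isomorphism, so it is automatic.

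\textbf{Step 1: $X\ast Y\simeq X$.} Using $X\simeq \bigfree_{n=1}^\infty X$, write
\[
\bigfree_{n=1}^\infty X\simeq \bigfree_{n=1}^\infty (Y\ast X_1)\simeq \Big(\bigfree_{n=1}^\infty Y\Big)\ast\Big(\bigfree_{n=1}^\infty X_1\Big).
\]
The last isomorphism comes from associativity and commutativity, regrouping the index set $\mathbb N\times\{1,2\}$ along the partition by the second coordinate. The same isomorphism applied to $Y\ast\bigfree_n(Y\ast X_1)$ shows that adding one more copy of $Y$ does not change the result, because $\{0\}\cup\mathbb N$ and $\mathbb N$ are in bijection. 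Hence
\[
Y\ast X\simeq Y\ast\bigfree_{n}(Y\ast X_1)\simeq \bigfree_{n}(Y\ast X_1)\simeq X.
\]

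\textbf{Step 2: $X\ast Y\simeq Y$.} From $Y\simeq X\ast Y_1$ and $X\simeq X\ast X$, which is a special case of the countable identity $X\simeq\bigfree_n X$ by splitting $\mathbb N$ into two infinite sets, we get
\[
X\ast Y\simeq X\ast(X\ast Y_1)\simeq (X\ast X)\ast Y_1\simeq X\ast Y_1\simeq Y.
\]

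\textbf{Step 3: conclusion.} Combining Steps 1 and 2, and using commutativity $X\ast Y\simeq Y\ast X$, gives $X\simeq X\ast Y\simeq Y$.

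The main obstacle is justifying rigorously that the countable regrouping isomorphisms are lattice isomorphisms, rather than merely formal identities. I will handle this via the partition statement of the associativity Proposition, which is isometric. For the "substitute an isomorphic factor inside a countable free product" step I will rely on the functoriality described above: the map $\overline{\bigfree}_n T$ induced by a single isomorphism $T:X\to Y\ast X_1$ has norm at most $\|T\|$, and its inverse has norm at most $\|T^{-1}\|$.
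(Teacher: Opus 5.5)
Your proposal is correct and follows the paper's own argument. The paper proves $Y\simeq X\ast Y$ by absorbing one copy of $X$ into $\bigfree_{n=1}^\infty X$ (you use the equivalent $X\simeq X\ast X$), and proves $X\simeq Y\ast X$ via $\bigfree_n(Y\ast X_1)\simeq(\bigfree_n Y)\ast(\bigfree_n X_1)$ and absorbing one copy of $Y$, exactly as in your Step 1. Your explicit justification that free products respect uniformly bounded lattice isomorphisms of the factors fills in a step the paper leaves implicit.
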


\begin{proof}
Using commutativity and associativity of free products, we obtain
\[
Y\simeq X\ast Y_1
\simeq
\Big(\bigfree_{n=1}^\infty X\Big)\ast Y_1
\simeq
X\ast
\Big(\bigfree_{n=1}^\infty X\Big)\ast Y_1
\simeq
X\ast Y.
\]
Similarly,
\[
X\simeq \bigfree_{n=1}^\infty X
\simeq
\bigfree_{n=1}^\infty (Y\ast X_1)
\simeq
\Big(\bigfree_{n=1}^\infty Y\Big)\ast
\Big(\bigfree_{n=1}^\infty X_1\Big)
\simeq
Y\ast\Big(\bigfree_{n=1}^\infty Y\Big)\ast
\Big(\bigfree_{n=1}^\infty X_1\Big) \simeq Y\ast X.
\]
Combining the two isomorphisms gives $X\simeq Y$.
\end{proof}

As a corollary of Proposition \ref{prop:complemented},
we obtain
\[
\bigfree_{n=1}^\infty \FBL[\ell_1]
\simeq
\FBL[\ell_1].
\]
lattice isometrically. Pe\l czy\'nski's decomposition method for free products yields the following rigidity result on free factors of $\FBL[\ell_1]$ which are free Banach lattices.

\begin{proposition}\label{prop:prime-freebanach}
If we have a lattice isomorphism
\[
\FBL[\ell_1]\simeq \FBL[E]\ast \FBL[F]
\]
for Banach spaces $E$ and $F$, then
\[
\FBL[\ell_1]\simeq \FBL[E]
\qquad\text{or}\qquad
\FBL[\ell_1]\simeq \FBL[F].
\]
\end{proposition}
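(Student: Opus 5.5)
The strategy is to reduce the statement to the Pe\l czy\'nski decomposition method for free products (Proposition~\ref{prop:pelczynski-freeproducts}), applied with $X=\FBL[\ell_1]$. The hypothesis $X\simeq \bigfree_{n=1}^\infty X$ there holds by the remark preceding this proposition. So it suffices to show that one of the factors, say $\FBL[E]$, can be written as $\FBL[\ell_1]\ast Y_1$ for some Banach lattice $Y_1$. Indeed, $X\simeq \FBL[E]\ast \FBL[F]$ is given, so with $Y=\FBL[E]$ and $X_1=\FBL[F]$ we would then get $X\simeq Y$.

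\emph{Step 1: $\ell_1$ is complemented in $E\oplus_1 F$.} By Proposition~\ref{prop:fbl-preserves}, $\FBL[E]\ast\FBL[F]\simeq \FBL[E\oplus_1 F]$, so $\FBL[\ell_1]$ is lattice isomorphic to $\FBL[E\oplus_1 F]$. The lattice $\FBL[\ell_1]$ contains a lattice copy of $\ell_1$. One way to see this: $\ell_1$ is complemented in it (e.g.\ $\FBL[\ell_1]\simeq\bigfree_n \FBL[\mathbb R]$ together with Proposition~\ref{prop:complemented}), and then \cite[Theorem~4.69]{burkinshaw} applies. Hence $\FBL[E\oplus_1 F]$ also contains a lattice copy of $\ell_1$. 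By \cite[Theorem~9.20]{freebanlat}, used already in the proof of the proposition on order continuous duals, this forces the Banach space $E\oplus_1 F$ to contain a complemented copy of $\ell_1$.

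\emph{Step 2: pass to one factor.} Lemma~\ref{lem:comp-l1} yields that $\ell_1$ is complemented in $E$ or in $F$; by symmetry assume it is $E$. Write $E\simeq \ell_1\oplus G$ linearly; since the $\ell_1$- and direct-sum norms are equivalent, $E\simeq \ell_1\oplus_1 G$. By functoriality of $\FBL$, linearly isomorphic spaces have lattice isomorphic free Banach lattices. Applying Proposition~\ref{prop:fbl-preserves} again gives
\[
\FBL[E]\simeq \FBL[\ell_1\oplus_1 G]\simeq \FBL[\ell_1]\ast \FBL[G].
\]

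\emph{Step 3: conclude.} We now have $X\simeq Y\ast X_1$ and $Y\simeq X\ast Y_1$ with $X=\FBL[\ell_1]$, $Y=\FBL[E]$, $X_1=\FBL[F]$, $Y_1=\FBL[G]$, together with $X\simeq\bigfree_{n}X$. Proposition~\ref{prop:pelczynski-freeproducts} then gives $\FBL[\ell_1]\simeq\FBL[E]$.

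The main obstacle is Step 1: transferring the presence of $\ell_1$ from the lattice $\FBL[\ell_1]$ back to the generating Banach space $E\oplus_1F$. My plan relies on the characterization in \cite[Theorem~9.20]{freebanlat}, which says that $\FBL[H]$ contains a lattice copy of $\ell_1$ exactly when $H$ contains a complemented copy of $\ell_1$. Only the direction ``lattice copy in $\FBL[H]$ $\Rightarrow$ complemented copy in $H$'' is needed, and I must check that it is the one that result provides. The remaining steps are formal consequences of results already established.
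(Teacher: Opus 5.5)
Your proof is correct and follows the paper's argument step by step. You pass to $\FBL[\ell_1]\simeq\FBL[E\oplus_1F]$ via Proposition~\ref{prop:fbl-preserves}, apply \cite[Theorem~9.20]{freebanlat} in the same direction the paper uses (both here and in the order-continuous-dual proposition), then Lemma~\ref{lem:comp-l1}, and conclude with Proposition~\ref{prop:pelczynski-freeproducts}. The only difference is cosmetic: you use the complemented decomposition $E\simeq\ell_1\oplus_1 G$ directly to take $Y_1=\FBL[G]$, whereas the paper first derives $E\simeq E\oplus_1\ell_1$ and takes $Y_1=\FBL[E]$; both work equally well.
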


\begin{proof}
By Proposition~\ref{prop:fbl-preserves},
\[
\FBL[\ell_1]\simeq \FBL[E]\ast \FBL[F]\simeq \FBL[E\oplus_1 F].
\]
By \cite[Theorem~9.20]{freebanlat}, the space $E\oplus_1 F$ contains a complemented copy of $\ell_1$. Lemma~\ref{lem:comp-l1} implies that either $E$ or $F$ contains a complemented copy of $\ell_1$. By symmetry we may assume that $E$ does.

Since $\ell_1\simeq \ell_1\oplus_1\ell_1$, Pe\l czy\'nski's decomposition method \cite[Theorem~2.2.3]{albiackalton} gives a linear isomorphism
\[
E\simeq E\oplus_1 \ell_1.
\]
Applying Proposition~\ref{prop:fbl-preserves} again, we obtain
\[
\FBL[E]\simeq \FBL[E\oplus_1\ell_1]\simeq \FBL[E]\ast \FBL[\ell_1].
\]
Now Proposition~\ref{prop:pelczynski-freeproducts}, applied with
\[
X=\FBL[\ell_1],
\qquad
Y=\FBL[E],
\qquad
X_1=\FBL[F],
\qquad
Y_1=\FBL[E],
\]
yields
\[
\FBL[\ell_1]\simeq \FBL[E].
\]
\end{proof}

It would be reasonable to expect that all free factors of a free Banach lattice are themselves free Banach lattices. That is, whenever $\FBL[E]$ is isomorphic to a free product $X*Y$, then $X$ is isomorphic to $\FBL[F]$ for some Banach space $F$. However, this is not true. In fact, many non-free $C(K)$ spaces arise as free factors, as we shall see.

The following theorem (\cite[p.108]{Doublesusptheo}), often referred to as the Double Suspension Theorem, may be known to topologists and opens the door to many ``exotic'' factors of free Banach lattices.

\begin{theorem} The double suspension of any homology sphere is homeomorphic to a sphere. More specifically, if $M$ is a homology sphere of dimension $n$, then $S(SM))\cong S^1*M\cong S^{n+2}$.
\end{theorem}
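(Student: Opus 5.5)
This theorem is quoted from \cite{Doublesusptheo} and is not proved in the paper; what follows is the standard route one would follow to establish it.

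First reduce the two formulations to one. By the discussion of joins in Section~\ref{sec:preliminaries}, $SM=M\ast S^0$. For compact Hausdorff spaces the join is associative and commutative up to homeomorphism, and $S^0\ast S^0\cong S^1$. Hence
\[
S(SM)=(M\ast S^0)\ast S^0\cong M\ast(S^0\ast S^0)\cong S^1\ast M,
\]
so it suffices to show $X:=S^1\ast M\cong S^{n+2}$. We may assume $n\geq 3$; in low dimensions the only homology spheres are genuine spheres, and there $S^m\ast S^n=S^{m+n+1}$ finishes.

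Next we show that $X$ is a simply connected compact ENR homology $(n+2)$-manifold, splitting the points of $X$ into three types.
\begin{enumerate}
\item Points $[z,x,t]$ with $0<t<1$ have neighbourhoods homeomorphic to open subsets of $S^1\times M\times(0,1)$, so they are manifold points.
\item Points of the copy of $M$ (at $t=1$) have neighbourhoods homeomorphic to $U\times \mathrm{int}\,\mathrm{Cone}(S^1)\cong U\times\mathbb R^2$, with $U$ open in $M$, so they are also manifold points.
\item At points $z$ of the suspension circle $S^1$, local homology is that of the link. This link is $S^0\ast M=SM$ suspended once locally along the circle direction, and its homology is that of $S^{n+1}$ because $\tilde H_*(M)\cong\tilde H_*(S^n)$.
\end{enumerate}
So $X$ is a homology manifold that fails to be a manifold only along $S^1$. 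It is an ENR: $M$ is a compact manifold, hence an ENR, and joins of compact ENRs are ENRs. Finally, $X$ is simply connected, since the join of two path-connected spaces is simply connected by van Kampen, and has the homology of $S^{n+2}$ by the join formula for reduced homology.

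The core of the argument is the recognition step. By Cannon's and Edwards' work, a compact ENR homology manifold of dimension $\geq5$ that admits a cell-like resolution and has the disjoint disks property (DDP) is a topological manifold. A resolution is available in this codimension-$\geq 3$ situation via Edwards' construction; near the singular circle one can also argue directly with Bing shrinking. I expect verifying DDP at points of the circle to be the main obstacle. Away from $S^1$, general position in the manifold part settles it. Near $S^1$, the two extra suspension coordinates give room to push two maps of $2$-disks apart. The first thing I would try is Cannon's ``double suspension'' argument: use the $\mathbb R^2$-factor transverse to the circle to separate the disks, and reduce the remaining $1$-dimensional intersection with $S^1$ to a shrinking problem in $SM\times\mathbb R$.

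Once $X$ is known to be a manifold, it is a simply connected closed homology $(n+2)$-sphere with $n+2\geq5$. By Whitehead and Hurewicz it is a homotopy sphere, and the generalized topological Poincaré conjecture (Smale, Newman) gives $X\cong S^{n+2}$.
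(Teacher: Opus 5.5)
Your outline follows the standard Cannon--Edwards strategy behind the cited theorem. The paper itself gives no argument, only the reference \cite{Doublesusptheo}. The strategy is: reduce to $S^1\ast M$, check that it is a compact ENR homology $(n+2)$-manifold, apply Edwards' cell-like approximation theorem, and finish with the Poincar\'e conjecture. The reduction, the ENR, simple-connectivity and homology computations are fine. The local homology check is also fine; more precisely, the link of a point of the circle is $S^0\ast M$ itself and $H_*(X,X\setminus p)\cong\tilde H_{*-1}(S^0\ast M)$.

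There is, however, a genuine gap: the two hypotheses of the recognition theorem that you must supply are not established.

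\emph{Resolvability.} It does not follow from the singular set having codimension $\geq 3$; that hypothesis enters Cannon's \emph{shrinking} theorem, not the existence of a resolution. When $M=\partial W$ with $W$ compact contractible (available for $n\geq 4$ by Kervaire-type surgery), one gets a resolution as follows. Let $W_0$ be $W$ minus an open collar. Collapsing the sets $\{z\}\times W_0$, $z\in S^1$, in the closed manifold $\partial(D^2\times W)=(S^1\times W)\cup(D^2\times M)$ gives a cell-like map onto $S^1\ast M$. For $n=3$ this is not available in general: the Poincar\'e homology sphere bounds no contractible PL $4$-manifold, by Rokhlin's theorem. Another input is then needed: Cannon's separate construction of $S^1\ast M$ as a cell-like quotient of $S^5$, or later Freedman's topological contractible $4$-manifolds, or Quinn's resolution theorem (which applies since $X$ is connected and has manifold points).

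\emph{The disjoint disks property.} Your plan rests on a misidentified local model. Near a point of the circle, $X$ looks like $\mathbb R\times cM$, with the open cone $cM$ as the transverse slice. The $\mathbb R^2$ factor is transverse to the copy of $M$, where $X$ is already a manifold, and no shrinking enters DDP. Since $X\setminus S^1$ has fundamental group $\pi_1(M)$, non-trivial unless $M$ is a homotopy sphere, disks in general cannot be pushed off the circle at all. What works is the following:
\begin{enumerate}
\item Because $M$ is connected, reroute the $1$-skeleton of a fine triangulation of $D^2$ off $S^1$.
\item Cone the $2$-simplices lying near $S^1$ in the $cM$-coordinate, so each disk meets $S^1$ in finitely many points.
\item Rotate these points apart along $S^1$.
\item The remaining intersections now form a compact subset of the $(n+2)$-manifold $X\setminus S^1$, and general position ($2+2<n+2$) removes them.
\end{enumerate}
So DDP is the routine part. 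The real depth lies in Edwards' cell-like approximation theorem, which you, like the paper, take as a black box.
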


Now, just take a homology sphere $M$ of dimension $n$. Then,
\begin{align*}
    \FBL[\mathbb{R}^{n+3}]\simeq C(S^{n+2})\simeq C(S^1)*C(M)
\end{align*}

Since homology spheres have dimension $\geq3$, using that finite dimensional subspaces are complemented together with Proposition \ref{prop:fbl-preserves}, we obtain

\begin{proposition}\label{prop:nonfree-factor}
If $\dim(E)\geq 6$, then $\FBL[E]$ has a free factor that is not isomorphic to a free Banach lattice.
\end{proposition}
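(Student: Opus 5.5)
Take a homology $3$-sphere $M$ that is not a genuine sphere (for instance the Poincaré homology sphere), so $n=3$. By the Double Suspension Theorem, $S^1\ast M\cong S^{5}$. Theorem~\ref{thm:CK-join} then gives lattice isomorphisms
\[
C(S^1)\ast C(M)\simeq C(S^1\ast M)\simeq C(S^5)\simeq \FBL[\mathbb{R}^6],
\]
the last by \cite[Proposition 9.1]{freebanlat}. These are isomorphisms, not isometries, which is all the statement requires.

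To pass to a general $E$ with $\dim(E)\geq 6$, pick a $6$-dimensional subspace $F_0\subseteq E$. Being finite dimensional, it is complemented, so $E\simeq F_0\oplus_1 G$ as Banach spaces, with equivalent norms. Functoriality of $\FBL[\cdot]$ turns an isomorphism of Banach spaces into a lattice isomorphism of the free Banach lattices. Combining this with Proposition~\ref{prop:fbl-preserves} and associativity of free products,
\[
\FBL[E]\simeq \FBL[F_0]\ast\FBL[G]\simeq C(S^1)\ast C(M)\ast \FBL[G]\simeq C(M)\ast\big(C(S^1)\ast\FBL[G]\big).
\]
So $C(M)$ is a free factor of $\FBL[E]$.

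It remains to show that $C(M)$ is not lattice isomorphic to any $\FBL[F]$. Suppose $C(M)\simeq \FBL[F]$. Since $C(M)$ has a strong order unit, the fact recalled in the preliminaries forces $\dim F=k<\infty$. Then $\FBL[F]\simeq\FBL[\mathbb{R}^k]\simeq C(S^{k-1})$. A lattice isomorphism $C(K)\simeq C(L)$ between spaces of continuous functions yields a homeomorphism $K\cong L$: by Kakutani, $K$ is recovered as the set of normalized lattice homomorphisms to $\mathbb{R}$, and lattice isomorphisms preserve these rays up to scaling (Banach–Stone for lattices). Hence $M\cong S^{k-1}$. This is impossible because $M$ is not simply connected: its fundamental group is the binary icosahedral group, while $S^{k-1}$ with $k-1=3$ is simply connected, and other values of $k$ are excluded by dimension or homology.

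The main obstacle is this last step. It requires an honest homology sphere that is not homeomorphic to a sphere, and a careful justification that a lattice isomorphism of $C(K)$-spaces gives a homeomorphism of the underlying compacta. The dimension count $\dim E\geq 6=n+3$ with $n=3$ then matches the hypothesis of the statement.
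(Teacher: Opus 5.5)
Your proof is correct and follows essentially the paper's route. You use the Double Suspension Theorem with a homology $3$-sphere $M$ and Theorem~\ref{thm:CK-join} to get $\FBL[\mathbb{R}^6]\simeq C(S^5)\simeq C(S^1)\ast C(M)$, then pass to any $E$ with $\dim(E)\geq 6$ by complementing a $6$-dimensional subspace and applying Proposition~\ref{prop:fbl-preserves}. Your last paragraph makes explicit why $C(M)$ is not a free Banach lattice, which the paper leaves to the reader: the strong unit forces $\dim F<\infty$, the lattice form of Banach--Stone gives $M\cong S^{k-1}$, and the nontrivial $\pi_1(M)$ rules this out.
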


The preceding argument can be strengthened to show that a free Banach lattice may decompose as a free product of two non-free Banach lattices. If $M$ is a homology sphere of dimension $n$, $SM$ is not a sphere (it is not even a manifold \cite[Example 10.4.9]{Davis}) but 
\begin{align*}
C(SM)*C(SM)&\simeq C((S^0*M)*(S^0*M)) \\
&\simeq C((S^1*M)*M))\simeq C(S^{n+2}*M) \\ 
&\simeq C(S^n*(S^1*M))\simeq C(S^{2n+3}).
\end{align*}

so both factors fail to be free Banach lattices while their free product is.

\section{Free products of free Banach lattices generated by lattices}\label{sec:freelattice}

Let $\mathbb L$ be a lattice. The free Banach lattice generated by $\mathbb L$, introduced in \cite{freebanlatlat}, is a Banach lattice $\FBL\langle \mathbb L\rangle$ together with a lattice homomorphism
\[
\phi_{\mathbb L}:{\mathbb L}\to \FBL\langle \mathbb L\rangle
\]
such that whenever $X$ is a Banach lattice and $T:{\mathbb L}\to X$ is a lattice homomorphism with norm bounded range, there exists a unique Banach lattice homomorphism
\[
\widehat{T}:\FBL\langle \mathbb L\rangle\to X
\]
with $\widehat{T}\phi_{\mathbb L}=T$ and
\[
\|\widehat{T}\|=\sup_{x\in\mathbb L}\|Tx\|.
\]

The free Banach lattice generated by a lattice also admits a concrete function-space representation. If
\[
L^*=\{x^*:{\mathbb L}\to [-1,1]: x^*\text{ is a lattice homomorphism}\},
\]
then for each $x\in \mathbb L$ the evaluation map
\[
\dot{\delta}_x:{\mathbb L}^*\to \mathbb{R},
\qquad
\dot{\delta}_x(x^*)=x^*(x),
\]
belongs to a Banach lattice of functions on $\mathbb L^*$. More precisely, for $f\in \mathbb{R}^{\mathbb L^*}$ define
\[
\|f\|
=
\sup\Bigl\{\sum_{i=1}^n |f(x_i^*)|:\ x_1^*,\dots,x_n^*\in \mathbb L^*,\ \sup_{x\in L}\sum_{i=1}^n |x_i^*(x)|\leq 1\Bigr\}.
\]
Then $\FBL\langle \mathbb L\rangle$ is the closed sublattice generated by the evaluation maps inside the lattice of all functions $f$ with $\|f\|<\infty$.

Some care is needed when $\mathbb L$ is not distributive. Although $\FBL\langle \mathbb L\rangle$ is defined for every lattice, the canonical map $\phi_{\mathbb L}$ is injective precisely in the distributive case. By \cite[Proposition~3.2]{freebanlatlat}, one may always replace $\mathbb L$ with its image under $\phi_{\mathbb L}$ without changing $\FBL\langle \mathbb L\rangle$.

Equivalently, one may quotient by the congruence generated by the distributive identities. Following \cite{Gratzer}, let $\alpha$ be the congruence generated by the distributive laws and write
\[
\mathbb L_D=\mathbb L/\alpha.
\]
Then $\mathbb L_D$ is distributive, every lattice homomorphism from $\mathbb L$ into a distributive lattice factors through $\mathbb L_D$, and
\[
\FBL\langle \mathbb L\rangle=\FBL\langle \mathbb L_D\rangle.
\]
Thus, when studying free products of lattices through $\FBL\langle \cdot\rangle$, there is no loss in assuming the lattices are distributive.

If $(\mathbb L_i)_{i\in I}$ is a family of lattices, their lattice free product $\bigfree \mathbb L_i$ (see \cite[Chapter VII ]{Gratzer}) need not be distributive even when the factors are. There is also the notion of distributive free product, the coproduct for the class of distributive lattices, $\bigfree_D\mathbb L_i$, which is nothing but $(\bigfree \mathbb L_i)_D$. By the preceding observation, one may work with any of them.

Suppose now that $X$ is a Banach lattice and $(\mathbb L_i)$ is a family of sublattices of $X$ that is uniformly bounded, this is, there exists $K>0$ such that $\mathbb L_i\subset KB_X$ for all $i$. It can happen that the sublattice generated by the $\mathbb L_i$ is still not bounded. Take, for example, $\mathbb L_n=\{e_n\}\subset \ell_1$. It is clear that $\mathbb L_n\subset B_{\ell_1}$ for every $n\in\mathbb{N}$. However, $\|e_1\vee\ldots\vee e_n\|=n$, so the sublattice generated by the $\mathbb L_n$ is not bounded.
 
 The situation changes if we work with finite families of sublattices only. 
 
 \begin{lemma}\label{lem:bounded-sublattice} 
 Suppose $\mathbb L_1$ and $\mathbb L_2$ are norm bounded sublattices of a Banach lattice $X$. Then the sublattice (not the vector sublattice) generated by $\mathbb L_1$ and $\mathbb L_2$ in $X$ is norm bounded.
 \end{lemma}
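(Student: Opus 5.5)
The plan is to describe explicitly what the lattice generated by $\mathbb L_1\cup\mathbb L_2$ looks like, and then bound each of its elements. Suppose $\mathbb L_1,\mathbb L_2\subset KB_X$. The sublattice (not vector sublattice) generated by $\mathbb L_1\cup\mathbb L_2$ consists of all finite lattice polynomials, using only $\vee$ and $\wedge$, in elements of $\mathbb L_1\cup\mathbb L_2$. The vector lattice $X$ is distributive as a lattice. Therefore every such polynomial can be rewritten in disjunctive normal form, namely as
\[
\bigvee_{k=1}^n \bigwedge_{l=1}^{m_k} z_{k,l},\qquad z_{k,l}\in \mathbb L_1\cup\mathbb L_2 .
\]

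The key step is to use that each $\mathbb L_i$ is itself closed under $\vee$ and $\wedge$. In each meet $\bigwedge_l z_{k,l}$, I will group the terms coming from $\mathbb L_1$ and those coming from $\mathbb L_2$. Their meets lie again in $\mathbb L_1$ and $\mathbb L_2$ respectively, so each meet has one of the forms $a$, $b$, or $a\wedge b$, with $a\in\mathbb L_1$ and $b\in\mathbb L_2$. Next, I will exploit distributivity in the dual direction: a join of finitely many elements of the form $a_k\wedge b_k$ is a meet of joins of the form $a_k\vee b_{k'}$ and the like. Carrying this through, every element of the generated lattice should reduce to one of finitely many shapes, namely
\[
a,\quad b,\quad a\wedge b,\quad a\vee b,\quad (a\wedge b)\vee a',\quad (a\wedge b)\vee b',\quad (a\wedge b)\vee(a'\wedge b'),\ \ldots
\]
The cleanest way to avoid enumerating shapes is the following observation. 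For $a_k\in\mathbb L_1$ and $b_k\in\mathbb L_2$ we have
\[
\bigwedge_k a_k\wedge\bigwedge_k b_k\;\le\;\bigvee_k (a_k\wedge b_k)\;\le\;\bigvee_k a_k\wedge \bigvee_k b_k .
\]
The same two-sided sandwich holds for joins that also include pure terms $a_k$ or $b_k$. In that case we get lower bound $\alpha\wedge\beta$ and upper bound $\alpha'\vee\beta'$, with $\alpha,\alpha'\in\mathbb L_1$ and $\beta,\beta'\in\mathbb L_2$.

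Thus every element $z$ of the generated lattice satisfies $u\le z\le v$, where $u$ and $v$ are each a lattice expression in at most one element of $\mathbb L_1$ and one element of $\mathbb L_2$. Then
\[
|z|\le |u|\vee|v|\le |u|+|v|,\qquad |u|,|v|\le |\alpha|+|\beta|+|\alpha'|+|\beta'| ,
\]
which gives $\|z\|\le 4K$ (the exact constant is irrelevant). The main obstacle is to justify the sandwich carefully when the DNF mixes pure terms with mixed meets. I would handle this by proving, by induction on the construction of the lattice polynomial, the invariant ``$z$ lies between $\alpha\wedge\beta$ and $\alpha'\vee\beta'$ for some $\alpha,\alpha'\in\mathbb L_1$ and $\beta,\beta'\in\mathbb L_2$''. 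Generators satisfy it: for $a\in\mathbb L_1$, take any $b\in\mathbb L_2$ and use $a\wedge b\le a\le a\vee b$. The invariant is preserved under $\vee$, since $z_1\vee z_2$ lies between $(\alpha_1\wedge\alpha_2)\wedge(\beta_1\wedge\beta_2)$ and $(\alpha_1'\vee\alpha_2')\vee(\beta_1'\vee\beta_2')$. It is preserved under $\wedge$ in the same way. This induction makes distributivity unnecessary, and I expect it to be the whole proof.
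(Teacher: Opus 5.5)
Your final argument is correct and is essentially the paper's proof: induction on the complexity of the lattice formula shows that every element lies in an order interval $[\alpha\wedge\beta,\alpha'\vee\beta']$ with $\alpha,\alpha'\in\mathbb L_1$ and $\beta,\beta'\in\mathbb L_2$, and then $|z|\le|\alpha|+|\beta|+|\alpha'|+|\beta'|$ gives $\|z\|\le 4K$. As you note yourself, the disjunctive-normal-form detour is unnecessary; also, the base case tacitly assumes both $\mathbb L_i$ are nonempty, which is harmless.
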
 
 
 \begin{proof} 
 Suppose $\mathbb L_1\cup \mathbb L_2\subset KB_X$ for some $K>0$. It is immediate, via induction on the complexity of the formula, that every element $x$ of the sublattice generated by $\mathbb L_1$ and $\mathbb L_2$ in $X$ is inside a certain order interval of the form $[x_1'\wedge x_2',x_1\vee x_2]$, where $x_1,x_1'\in \mathbb L_1$ and $x_2,x_2'\in \mathbb L_2$ (and they may depend on $x$). But then $$|x|\leq |x_1\wedge x_2|+|x_1'\vee x_2'|\leq |x_1|+|x_1'|+|x_2|+|x_2'|,$$
 from which $\|x\|\leq 4K$.
 \end{proof}

\begin{proposition}\label{prop:lattice-freeproducts}
Let $\mathbb L_1$ and $\mathbb L_2$ be lattices, and let $\mathbb L_1\ast \mathbb L_2$ denote their lattice free product. Then
\[
\FBL\langle \mathbb L_1\ast \mathbb L_2\rangle
\simeq
\FBL\langle \mathbb L_1\rangle\ast \FBL\langle \mathbb L_2\rangle
\]
lattice isometrically.
\end{proposition}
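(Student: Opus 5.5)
The plan is to show directly that $\FBL\langle \mathbb L_1\rangle\ast \FBL\langle \mathbb L_2\rangle$, with the natural map from $\mathbb L_1\ast\mathbb L_2$, satisfies the universal property defining $\FBL\langle \mathbb L_1\ast \mathbb L_2\rangle$; uniqueness of free objects then gives the lattice isometry. Let $\phi_i:\mathbb L_i\to \FBL\langle\mathbb L_i\rangle$ be the canonical maps and $\varphi_i$ the canonical embeddings into the free product $W:=\FBL\langle \mathbb L_1\rangle\ast \FBL\langle \mathbb L_2\rangle$. The maps $\varphi_i\phi_i:\mathbb L_i\to W$ are lattice homomorphisms into the lattice $W$. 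By the universal property of the lattice free product, they induce a unique lattice homomorphism $\phi:\mathbb L_1\ast\mathbb L_2\to W$ extending both. The range of $\phi$ is the sublattice (not vector sublattice) of $W$ generated by the norm bounded sets $\varphi_i\phi_i(\mathbb L_i)$. Each of these sets lies in the unit ball, since $\|\phi_i(x)\|\le 1$ in the functional representation and $\varphi_i$ is contractive. Lemma~\ref{lem:bounded-sublattice} therefore shows that $\phi$ has norm bounded range, with bound $4$.

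Next comes existence of extensions. Let $X$ be a Banach lattice and $T:\mathbb L_1\ast\mathbb L_2\to X$ a lattice homomorphism with bounded range. Put $T_i=T|_{\mathbb L_i}$, composed with the canonical maps $\mathbb L_i\to\mathbb L_1\ast\mathbb L_2$. By the universal property of $\FBL\langle\mathbb L_i\rangle$ these yield $\widehat{T_i}:\FBL\langle \mathbb L_i\rangle\to X$ with $\|\widehat{T_i}\|=\sup_{x\in\mathbb L_i}\|T_ix\|\le \sup\|T(\mathbb L_1\ast\mathbb L_2)\|$. Theorem~\ref{thm:existence} then gives $S=\widehat{T_1}\ast\widehat{T_2}:W\to X$ with $\|S\|=\max_i\|\widehat{T_i}\|$ and $S\varphi_i\phi_i=T_i$. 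The maps $S\phi$ and $T$ are lattice homomorphisms on $\mathbb L_1\ast\mathbb L_2$ that agree on the generators, so they coincide by uniqueness in the lattice free product. Uniqueness of $S$ holds because $\phi(\mathbb L_1\ast\mathbb L_2)$ contains $\varphi_i\phi_i(\mathbb L_i)$. These sets generate $\varphi_i(\FBL\langle\mathbb L_i\rangle)$ as Banach lattices, and by Proposition~\ref{prop:dense-sublattice} those copies generate $W$ densely.

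The main obstacle is the norm condition: the definition requires $\|S\|=\sup_{z\in\mathbb L_1\ast\mathbb L_2}\|Tz\|$. The inequality $\|S\|\le\sup\|Tz\|$ follows from the previous paragraph. The reverse inequality needs $\sup_z\|S\phi(z)\|\le\|S\|$, i.e.\ that $\phi$ takes values in $B_W$. Lemma~\ref{lem:bounded-sublattice} only gives the bound $4$, so I would sharpen it using the structure of the generating sets. Each $\phi_i(\mathbb L_i)$ consists of functions with values in $[-1,1]$ on $\mathbb L_i^*$, and every $w=\phi(z)$ is squeezed between a meet and a join of generators. I would try to show $|\phi(z)|\le |\varphi_1\phi_1(a)|\vee|\varphi_2\phi_2(b)|$ for suitable $a\in\mathbb L_1$, $b\in\mathbb L_2$. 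This needs a norm estimate $\|u\vee v\|\le 1$ for $u,v$ in the canonical copies of the unit balls, which is not automatic from Proposition~\ref{prop:unit-ball}. Failing a direct estimate, I would test the norm through lattice homomorphisms. Every lattice homomorphism $W\to\mathbb R$ of norm $\le 1$ restricts to $\mathbb L_i$ with values in $[-1,1]$, so it restricts on $\phi(\mathbb L_1\ast\mathbb L_2)$ to an element of $(\mathbb L_1\ast\mathbb L_2)^*$. The function representation of $\FBL\langle\mathbb L_1\ast\mathbb L_2\rangle$ then lets me compare norms computed with families $x_1^*,\dots,x_n^*$ on both sides. Concretely, I would build the canonical homomorphism $\FBL\langle\mathbb L_1\ast\mathbb L_2\rangle\to W$ and its inverse $W\to\FBL\langle\mathbb L_1\ast\mathbb L_2\rangle$, the latter from Theorem~\ref{thm:existence} applied to the contractive maps $\FBL\langle\mathbb L_i\rangle\to\FBL\langle\mathbb L_1\ast\mathbb L_2\rangle$. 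The two maps are mutually inverse on generators, hence everywhere by density. Showing that both are contractive settles isometry; the delicate part is the contractivity of the first, which is exactly the bound above.
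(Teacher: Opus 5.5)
\textbf{There is a genuine gap: the estimate you need is false.} You correctly singled out the norm condition as the crux, but $\phi$ does not map into $B_W$ for any nonempty $\mathbb L_1,\mathbb L_2$. Fix $x\in\mathbb L_1$ and $y\in\mathbb L_2$, and write $x\vee y$ for the join in $\mathbb L_1\ast\mathbb L_2$ of their images. Let $T:\mathbb L_1\ast\mathbb L_2\to\ell_1^2$ be the lattice homomorphism induced by the constant maps $\mathbb L_1\to\{e_1\}$ and $\mathbb L_2\to\{e_2\}$; constant maps are lattice homomorphisms with bounded range.
\begin{itemize}
\item Then $\|\widehat{T_1}\|=\|\widehat{T_2}\|=1$, so your $S=\widehat{T_1}\ast\widehat{T_2}$ has $\|S\|\le 1$.
\item But $S\phi(x\vee y)=e_1\vee e_2=e_1+e_2$, which has norm $2$.
\item Hence $\|\phi(x\vee y)\|_W\ge 2$, and $W$ violates the identity $\|S\|=\sup_z\|Tz\|$ that defines $\FBL\langle\mathbb L_1\ast\mathbb L_2\rangle$.
\end{itemize}
This is the $\ell_1$-behaviour of free products. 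For positive $u\in X_1$ and $v\in X_2$, mapping onto $X_1\oplus_1 X_2$ gives $\|\varphi_1u\vee\varphi_2v\|=\|u\|+\|v\|$. So no bound $\|u\vee v\|\le 1$ on the canonical unit balls can hold.

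Your fallback of testing through functionals fails for the same reason. The functionals $e_1^*S$ and $e_2^*S$ restrict to $x_1^*,x_2^*\in(\mathbb L_1\ast\mathbb L_2)^*$ with $\sum_j|x_j^*|\le 1$ on each $\mathbb L_i$ separately. However, $\sum_j|x_j^*(x\vee y)|=2$, so this family is not admissible for the norm of $\FBL\langle\mathbb L_1\ast\mathbb L_2\rangle$. The free product norm only sees the factor-wise constraints.

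\textbf{What your argument does prove.} Your existence and uniqueness arguments are correct. Together with Lemma~\ref{lem:bounded-sublattice}, they give a lattice isomorphism. The canonical map $W\to\FBL\langle\mathbb L_1\ast\mathbb L_2\rangle$ is contractive. Its inverse has norm $\sup_z\|\phi(z)\|_W$, which lies in $[2,4]$ by the example above and the lemma.

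\textbf{The paper's proof has the same omission.} The paper argues in the opposite direction: it checks that $\FBL\langle\mathbb L_1\ast\mathbb L_2\rangle$ with the maps $\overline\psi_i$ satisfies the universal property of the free product. It uses Lemma~\ref{lem:bounded-sublattice} only to see that $S$ has bounded range. It never verifies the bound $\|\widehat S\|\le\max_i\|T_i\|$ that the definition of free product requires. The same example breaks this. Take $T_i:\FBL\langle\mathbb L_i\rangle\to\ell_1^2$ induced by the constant maps, so $\|T_i\|=1$. Then $\|\widehat S\|=\sup_z\|Sz\|\ge 2$, and since $\widehat S$ is the unique extension, no contractive extension exists.

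So neither route, nor any argument through the canonical maps, yields the isometric claim. At least for the canonical identification, what is true is a lattice isomorphism with constant at most $4$.
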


\begin{proof}
Let
\[
\psi_i:{\mathbb L}_i\to \mathbb L_1\ast \mathbb L_2
\]
be the canonical lattice homomorphisms for $i=1,2$, and let
\[
\phi_i:{\mathbb L}_i\to \FBL\langle \mathbb L_i\rangle,
\qquad
\phi:{\mathbb L}_1\ast \mathbb L_2\to \FBL\langle \mathbb L_1\ast \mathbb L_2\rangle
\]
be the canonical maps. By the universal property of $\FBL\langle \mathbb L_i\rangle$, each composition $\phi\psi_i$ extends to a contractive lattice homomorphism
\[
\overline{\psi}_i:\FBL\langle \mathbb L_i\rangle\to \FBL\langle \mathbb L_1\ast \mathbb L_2\rangle.
\]

We claim that $\FBL\langle \mathbb L_1\ast \mathbb L_2\rangle$, together with $\overline{\psi}_1$ and $\overline{\psi}_2$, is a free product of $\FBL\langle \mathbb L_1\rangle$ and $\FBL\langle \mathbb L_2\rangle$. Let $X$ be a Banach lattice and let
\[
T_i:\FBL\langle \mathbb L_i\rangle\to X
\]
be bounded lattice homomorphisms. Define
\[
S_i=T_i\phi_i:{\mathbb L}_i\to X.
\]
Each $S_i$ has bounded range because $\phi_i(\mathbb L_i)$ is bounded and $T_i$ is bounded. By the universal property of the lattice free product, there exists a unique lattice homomorphism
\[
S:{\mathbb L}_1\ast \mathbb L_2\to X
\]
with $S\psi_i=S_i$ for $i=1,2$. By Lemma~\ref{lem:bounded-sublattice}, the range of $S$ is bounded. Hence the universal property of $\FBL\langle \mathbb L_1\ast \mathbb L_2\rangle$ gives a unique Banach lattice homomorphism
\[
\widehat{S}:\FBL\langle \mathbb L_1\ast \mathbb L_2\rangle\to X
\]
with $\widehat{S}\phi=S$. For each $i=1,2$,
\[
\widehat{S}\,\overline{\psi}_i\phi_i
=
\widehat{S}\phi\psi_i
=
S\psi_i
=
S_i
=
T_i\phi_i.
\]
By uniqueness in $\FBL\langle \mathbb L_i\rangle$, we conclude that
\[
\widehat{S}\,\overline{\psi}_i=T_i.
\]
The uniqueness of $\widehat{S}$ is immediate from the fact that $\phi(\mathbb L_1\ast \mathbb L_2)$ generates a dense sublattice of $\FBL\langle \mathbb L_1\ast \mathbb L_2\rangle$.
\end{proof}

\begin{corollary}\label{cor:lattice-join}
Suppose $\mathbb L_1$ and $\mathbb L_2$ are lattices with minimum and maximum elements. For $i=1,2$, let
\[
K_{\mathbb L_i}
=
\Big\{x^*\in \mathbb L_i^*:\ \max\{|x^*(m_i)|,|x^*(M_i)|\}=1\Big\},
\]
where $m_i$ and $M_i$ denote the minimum and maximum elements of $\mathbb L_i$. Then
\[
K_{\mathbb L_1\ast \mathbb L_2}\cong K_{\mathbb L_1}\ast K_{\mathbb L_2}.
\]
\end{corollary}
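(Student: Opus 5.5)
The plan is to realize $K_{\mathbb L}$ as the Kakutani compact of $\FBL\langle\mathbb L\rangle$, and then combine Proposition~\ref{prop:lattice-freeproducts} with Theorem~\ref{thm:CK-join}.

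\textbf{Step 1 (strong unit).} For a lattice $\mathbb L$ with minimum $m$ and maximum $M$, every $x\in\mathbb L$ satisfies $m\le x\le M$. Hence every evaluation $\dot\delta_x$ lies in the order interval $[\dot\delta_m,\dot\delta_M]$, and so
\[
|\dot\delta_x|\le u:=|\dot\delta_m|\vee|\dot\delta_M|.
\]
The same bound passes to the sublattice generated by the evaluations. The gauge norm $\|\cdot\|_u$ is dominated by the $\FBL$ norm up to the constant $\|u\|\le 2$. Conversely, the $\FBL$ norm should be dominated by a multiple of $\|\cdot\|_u$: for any $|f|\le\lambda u$ we get $\|f\|\le\lambda\|u\|$. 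So $\FBL\langle\mathbb L\rangle$ has strong unit $u$, and Kakutani gives $\FBL\langle\mathbb L\rangle\simeq C(F_{\mathbb L})$. Here $F_{\mathbb L}$ is the set of real lattice homomorphisms $\omega$ with $\omega(u)=1$, carrying the weak$^*$ topology.

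\textbf{Step 2 (identify $F_{\mathbb L}$ with $K_{\mathbb L}$).} A lattice homomorphism $\omega$ on $\FBL\langle\mathbb L\rangle$ restricts via $\phi_{\mathbb L}$ to a bounded lattice homomorphism $\mathbb L\to\mathbb R$. Since $\omega$ preserves $|\cdot|$ and $\vee$, we have
\[
\omega(u)=\max\{|\omega\phi(m)|,|\omega\phi(M)|\}.
\]
Conversely, by the universal property, any $x^*\in\mathbb L^*$ with $\max\{|x^*(m)|,|x^*(M)|\}=1$ extends uniquely to a lattice homomorphism $\omega$ with $\omega(u)=1$. Such an $x^*$ takes values in $[-1,1]$, because its values are squeezed between $x^*(m)$ and $x^*(M)$. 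Uniqueness comes from density of the lattice generated by $\phi(\mathbb L)$. The restriction map is therefore a bijection $F_{\mathbb L}\to K_{\mathbb L}$, and it is continuous for pointwise topologies. Its inverse is continuous on the dense sublattice, and since everything is uniformly bounded by $u$, it is continuous on the whole space. Hence $F_{\mathbb L}\cong K_{\mathbb L}$, provided we equip $K_{\mathbb L}$ with the pointwise topology, which is presumably the intended one.

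\textbf{Step 3 (combine).} The lattice $\mathbb L_1\ast\mathbb L_2$ has minimum $m_1\wedge m_2$ and maximum $M_1\vee M_2$, since every element lies between them by the induction of Lemma~\ref{lem:bounded-sublattice}. Applying Steps 1 and 2 together with Proposition~\ref{prop:lattice-freeproducts} and Theorem~\ref{thm:CK-join} yields
\[
C(K_{\mathbb L_1\ast\mathbb L_2})\simeq\FBL\langle\mathbb L_1\ast\mathbb L_2\rangle\simeq C(K_{\mathbb L_1})\ast C(K_{\mathbb L_2})\simeq C(K_{\mathbb L_1}\ast K_{\mathbb L_2}).
\]
By Kaplansky's theorem (Banach-lattice isomorphic $C(K)$'s have homeomorphic $K$), this gives $K_{\mathbb L_1\ast\mathbb L_2}\cong K_{\mathbb L_1}\ast K_{\mathbb L_2}$.

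\textbf{Main obstacle.} The delicate part is Step 2: checking that the normalization $\max\{|x^*(m)|,|x^*(M)|\}=1$ is exactly the condition $\omega(u)=1$, and that the bijection is a homeomorphism. An alternative route is to map directly: a point $[x_1^*,x_2^*,t]$ of the join goes to the homomorphism obtained by extending $(1-t)x_1^*$ and $tx_2^*$. One would then verify the normalization through
\[
\max\{|\omega(m_1\wedge m_2)|,|\omega(M_1\vee M_2)|\}=1.
\]
This relies on the Kakutani unit being $u_1\vee u_2$ instead of $u_1+u_2$. The difference is harmless, since any two strong units give homeomorphic Kakutani spaces via renormalization $\omega\mapsto\omega/\omega(u)$.
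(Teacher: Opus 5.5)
Your argument is correct and is essentially the paper's. The paper cites \cite[Theorem~2.7]{freebanlatlatproj} for $\FBL\langle\mathbb L\rangle\simeq C(K_{\mathbb L})$, which your Steps 1--2 rederive via Kakutani, and then chains Proposition~\ref{prop:lattice-freeproducts} with Theorem~\ref{thm:CK-join} exactly as in your Step 3; it leaves implicit your check that $\mathbb L_1\ast\mathbb L_2$ has bounds $m_1\wedge m_2$ and $M_1\vee M_2$.

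One small slip: the estimate $\|f\|_u\le\|f\|$ does not come from $\|u\|\le 2$, since that constant belongs to the reverse inequality. It comes from positive homogeneity: $x^*/u(x^*)\in\mathbb L^*$, so $|f(x^*)|\le u(x^*)\|f\|$ whenever $u(x^*)>0$, and $f(x^*)=0$ when $u(x^*)=0$.
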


\begin{proof}
By \cite[Theorem~2.7]{freebanlatlatproj}, the lattices $\FBL\langle \mathbb L_i\rangle$ and $\FBL\langle \mathbb L_1\ast \mathbb L_2\rangle$ are lattice isomorphic to the spaces $C(K_{\mathbb L_i})$ and $C(K_{\mathbb L_1\ast \mathbb L_2})$, respectively. Proposition~\ref{prop:lattice-freeproducts} and Theorem~\ref{thm:CK-join} therefore give lattice isomorphisms
\[
C(K_{\mathbb L_1\ast \mathbb L_2})
\simeq
\FBL\langle \mathbb L_1\ast \mathbb L_2\rangle
\simeq
\FBL\langle \mathbb L_1\rangle\ast \FBL\langle \mathbb L_2\rangle
\simeq
C(K_{\mathbb L_1})\ast C(K_{\mathbb L_2})
\simeq
C(K_{\mathbb L_1}\ast K_{\mathbb L_2}).
\]
We conclude that
\[
K_{\mathbb L_1\ast \mathbb L_2}\cong K_{\mathbb L_1}\ast K_{\mathbb L_2}.
\]
\end{proof}

\section*{Acknowledgements}
Research partially supported by grants PID2024-162214NB-I00 and CEX2023-001347-S funded by MCIN/AEI/10.13039/501100011033. Martínez-Fernández was also supported by grant PREX2023-000080 funded by MCIN/AEI/10.13039/501100011033. 

The authors would like to thank E. Bilokopytov, E. García-Sánchez and J. Illescas for the many helpful conversations related to this work. They also thank Jaime J. Sánchez-Gabites for kindly providing useful references on the join of topological spaces.

\bibliographystyle{amsplain}
\bibliography{mainbib}

@article{factorpconvex,
 author = {Raynaud, Yves and Tradacete, Pedro},
 title = {Interpolation of {Banach} lattices and factorization of {{\(p\)}}-convex and {{\(q\)}}-concave operators},
 fjournal = {Integral Equations and Operator Theory},
 journal = {Integral Equations Oper. Theory},
 issn = {0378-620X},
 volume = {66},
 number = {1},
 pages = {79--112},
 year = {2010},
 language = {English},
 doi = {10.1007/s00020-009-1733-7},
 zbMATH = {5774238},
 Zbl = {1210.47062}
}

@book{albiackalton,
 author = {Albiac, Fernando and Kalton, Nigel J.},
 title = {Topics in {Banach} space theory},
 edition = {2nd revised and updated edition},
 fseries = {Graduate Texts in Mathematics},
 series = {Grad. Texts Math.},
 issn = {0072-5285},
 volume = {233},
 isbn = {978-3-319-31555-3; 978-3-319-31557-7},
 year = {2016},
 publisher = {Cham: Springer},
 language = {English},
 doi = {10.1007/978-3-319-31557-7},
 zbMATH = {6566917},
 Zbl = {1352.46002}
}

@book{diestel,
 author = {Diestel, Joseph},
 title = {Sequences and series in {Banach} spaces},
 fseries = {Graduate Texts in Mathematics},
 series = {Grad. Texts Math.},
 issn = {0072-5285},
 volume = {92},
 year = {1984},
 publisher = {Springer, Cham},
 language = {English},
 zbMATH = {3861790},
 Zbl = {0542.46007}
}

@book{burkinshaw,
 author = {Aliprantis, Charalambos D. and Burkinshaw, Owen},
 title = {Positive operators},
 edition = {Reprint of the 1985 original},
 isbn = {1-4020-5007-0},
 year = {2006},
 publisher = {Berlin: Springer},
 language = {English},
 zbMATH = {5061163},
 Zbl = {1098.47001}
}

@book{hatcher,
 author = {Hatcher, Allen},
 title = {Algebraic topology},
 isbn = {0-521-79540-0},
 year = {2002},
 publisher = {Cambridge: Cambridge University Press},
 language = {English},
 zbMATH = {2103273},
 Zbl = {1044.55001}
}

@article{projfree,
 author = {de Pagter, Ben and Wickstead, Anthony W.},
 title = {Free and projective {Banach} lattices},
 fjournal = {Proceedings of the Royal Society of Edinburgh. Section A. Mathematics},
 journal = {Proc. R. Soc. Edinb., Sect. A, Math.},
 issn = {0308-2105},
 volume = {145},
 number = {1},
 pages = {105--143},
 year = {2015},
 language = {English},
 doi = {10.1017/S0308210512001709},
 zbMATH = {6438434},
 Zbl = {1325.46020}
}

@article{amalg,
 author = {Avil{\'e}s, Antonio and Tradacete, Pedro},
 title = {Amalgamation and injectivity in {Banach} lattices},
 fjournal = {IMRN. International Mathematics Research Notices},
 journal = {Int. Math. Res. Not.},
 issn = {1073-7928},
 volume = {2023},
 number = {2},
 pages = {956--997},
 year = {2023},
 language = {English},
 doi = {10.1093/imrn/rnab285},
 zbMATH = {7652752},
 Zbl = {1518.46010}
}

@article{freebanlat,
 author = {Timur Oikhberg and Mitchell A. Taylor and Pedro Tradacete and Vladimir G. Troitsky},
 title = {Free {Banach} lattices},
 journal = {J. Eur. Math. Soc.},
 year = {2024},
 pages = {published online first},
}

@article{freebasico,
 author = {Avil{\'e}s, Antonio and Rodr{\'{\i}}guez, Jos{\'e} and Tradacete, Pedro},
 title = {The free {Banach} lattice generated by a {Banach} space},
 fjournal = {Journal of Functional Analysis},
 journal = {J. Funct. Anal.},
 issn = {0022-1236},
 volume = {274},
 number = {10},
 pages = {2955--2977},
 year = {2018},
 language = {English},
 doi = {10.1016/j.jfa.2018.03.001},
 zbMATH = {6855726},
 Zbl = {1400.46015}
}

@book{nieberg,
 author = {Meyer-Nieberg, Peter},
 title = {Banach lattices},
 fseries = {Universitext},
 series = {Universitext},
 issn = {0172-5939},
 isbn = {3-540-54201-9},
 year = {1991},
 publisher = {Berlin etc.: Springer-Verlag},
 language = {English},
 zbMATH = {51953},
 Zbl = {0743.46015}
}

@article{freebanlatlat,
 author = {Avil{\'e}s, Antonio and Rodr{\'{\i}}guez Abell{\'a}n, Jos{\'e} David},
 title = {The free {Banach} lattice generated by a lattice},
 fjournal = {Positivity},
 journal = {Positivity},
 issn = {1385-1292},
 volume = {23},
 number = {3},
 pages = {581--597},
 year = {2019},
 language = {English},
 doi = {10.1007/s11117-018-0626-x},
 zbMATH = {7098425},
 Zbl = {1437.46020}
}

@book{inftop,
 author = {Bessaga, Czeslaw and Pe{\l}czy{\'n}ski, Aleksander},
 title = {Selected topics in infinite-dimensional topology},
 fseries = {Monografie Matematyczne},
 series = {Monogr. Mat., Warszawa},
 volume = {58},
 year = {1975},
 publisher = {PWN - Panstwowe Wydawnictwo Naukowe, Warszawa},
 language = {English},
 zbMATH = {3476334},
 Zbl = {0304.57001}
}

@book{Gratzer,
 author = {Gr{\"a}tzer, George},
 title = {Lattice theory: {Foundation}},
 isbn = {978-3-0348-0017-4; 978-3-0348-0018-1},
 year = {2011},
 publisher = {Basel: Birkh{\"a}user},
 language = {English},
 doi = {10.1007/978-3-0348-0018-1},
 zbMATH = {5801871},
 Zbl = {1233.06001}
}

@article{freebanlatlatproj,
 author = {Avil{\'e}s, Antonio and Mart{\'{\i}}nez-Cervantes, Gonzalo and Rodr{\'{\i}}guez Abell{\'a}n, Jos{\'e} David and Rueda Zoca, Abraham},
 title = {Free {Banach} lattices generated by a lattice and projectivity},
 fjournal = {Proceedings of the American Mathematical Society},
 journal = {Proc. Am. Math. Soc.},
 issn = {0002-9939},
 volume = {150},
 number = {5},
 pages = {2071--2082},
 year = {2022},
 language = {English},
 doi = {10.1090/proc/15802},
 zbMATH = {7487897},
 Zbl = {1492.46015}
}

@article{Doublesusptheo,
 author = {Cannon, James W.},
 title = {Shrinking cell-like decompositions of manifolds. {Codimension} three},
 fjournal = {Annals of Mathematics. Second Series},
 journal = {Ann. Math. (2)},
 issn = {0003-486X},
 volume = {110},
 pages = {83--112},
 year = {1979},
 language = {English},
 doi = {10.2307/1971245},
 zbMATH = {3660468},
 Zbl = {0424.57007}
}

@book{combgrouptheo,
 author = {Lyndon, Roger C. and Schupp, Paul E.},
 title = {Combinatorial group theory.},
 edition = {Reprint of the 1977 ed.},
 fseries = {Classics in Mathematics},
 series = {Class. Math.},
 issn = {1431-0821},
 isbn = {3-540-41158-5},
 year = {2001},
 publisher = {Berlin: Springer},
 language = {English},
 zbMATH = {1554175},
 Zbl = {0997.20037}
}

@article{deHeviaTradacete2023ComplexFBL,
  author = {de Hevia, David and Tradacete, Pedro},
  title = {Free complex {Banach} lattices},
  journal = {Journal of Functional Analysis},
  year = {2023},
  volume = {284},
  number = {10},
  pages = {109888},
  doi = {10.1016/j.jfa.2023.109888}
}

@article{JardonLaustsenTaylorTradaceteTroitsky2022Convexity,
  author = {Jard\'on-S\'anchez, H\'ector and Laustsen, Niels Jakob and Taylor, Mitchell A. and Tradacete, Pedro and Troitsky, Vladimir G.},
  title = {Free {Banach} lattices under convexity conditions},
  journal = {Revista de la Real Academia de Ciencias Exactas, F\'isicas y Naturales. Serie A. Matem\'aticas},
  year = {2022},
  volume = {116},
  pages = {Article 15},
  doi = {10.1007/s13398-021-01155-8}
}

@article{Oikhberg2024Geometry,
  author = {Oikhberg, Timur},
  title = {Geometry of unit balls of free {Banach} lattices, and its applications},
  journal = {Journal of Functional Analysis},
  year = {2024},
  volume = {286},
  number = {8},
  pages = {110351},
  doi = {10.1016/j.jfa.2024.110351}
}

@article{GarciaSanchezLeungTaylorTradacete2025UpperP,
  author = {Garc\'ia-S\'anchez, Enrique and Leung, Dennis H. and Taylor, Mitchell A. and Tradacete, Pedro},
  title = {Banach lattices with upper $p$-estimates: free and injective objects},
  journal = {Mathematische Annalen},
  year = {2025},
  volume = {391},
  pages = {3363--3398},
  doi = {10.1007/s00208-024-03002-8}
}

@book{LT2,
 author = {Lindenstrauss, Joram and Tzafriri, Lior},
 title = {Classical {Banach} spaces. {II}: {Function} spaces},
 fseries = {Ergebnisse der Mathematik und ihrer Grenzgebiete},
 series = {Ergeb. Math. Grenzgeb.},
 volume = {97},
 year = {1979},
 publisher = {Springer-Verlag, Berlin},
 language = {English},
 zbMATH = {3626044},
 Zbl = {0403.46022}
}

@article{Chatzinikolaou2023OperatorASystems,
  author = {Chatzinikolaou, Alexandros},
  title = {On coproducts of operator $\mathcal{A}$-systems},
  journal = {Operators and Matrices},
  year = {2023},
  volume = {17},
  number = {2},
  pages = {435--468},
  doi = {10.7153/oam-2023-17-30}
}

@article{Kavruk2014Nuclearity,
  author = {Kavruk, Ali Samil},
  title = {Nuclearity related properties in operator systems},
  journal = {Journal of Operator Theory},
  year = {2014},
  volume = {71},
  number = {1},
  pages = {95--156},
  doi = {10.7900/jot.2011nov16.1977}
}

@article{IoanaSpaasVigdorovich2025TraceSpaces,
  author = {Ioana, Adrian and Spaas, Pieter and Vigdorovich, Itamar},
  title = {Trace spaces of full free product $C^*$-algebras},
  journal = {Compositio Mathematica},
  year = {2025},
  volume = {161},
  number = {11},
  pages = {2947--2989},
  doi = {10.1112/S0010437X25102832}
}

@article{Chirvasitu2022Dearth,
  author = {Chirvasitu, Alexandru},
  title = {On the dearth of coproducts in the category of locally compact groups},
  journal = {Theory and Applications of Categories},
  year = {2022},
  volume = {38},
  number = {20},
  pages = {791--810}
}

@article{HerfortHofmannRusso2024ProLie,
  author = {Herfort, Wolfgang and Hofmann, Karl H. and Russo, Francesco G.},
  title = {A short note on coproducts of {Abelian} pro-{Lie} groups},
  journal = {Monatshefte f\"ur Mathematik},
  year = {2024},
  volume = {204},
  pages = {887--892},
  doi = {10.1007/s00605-023-01915-1}
}

@article{Carai2026Godel,
  author = {Carai, Luca},
  title = {Free algebras and coproducts in varieties of {G}\"odel algebras},
  journal = {The Journal of Symbolic Logic},
  year = {2026},
  pages = {1--32},
  doi = {10.1017/jsl.2026.10194}
}

@article{GluesingLuerssenJany2023QMatroids,
  author = {Gluesing-Luerssen, Heide and Jany, Benjamin},
  title = {Coproducts in categories of $q$-matroids},
  journal = {European Journal of Combinatorics},
  year = {2023},
  volume = {112},
  pages = {103733},
  doi = {10.1016/j.ejc.2023.103733}
}

@book{Davis,
 author = {Davis, Michael W.},
 title = {The geometry and topology of {Coxeter} groups},
 edition = {2nd edition},
 fseries = {Springer Monographs in Mathematics},
 series = {Springer Monogr. Math.},
 issn = {1439-7382},
 isbn = {978-3-031-91302-0; 978-3-031-91303-7},
 year = {2025},
 publisher = {Cham: Springer},
 language = {English},
 doi = {10.1007/978-3-031-91303-7},
 zbMATH = {8014470},
 Zbl = {1573.20001}
}

@article{o-conv-ck,
 author = {Bilokopytov, Eugene and Troitsky, Vladimir G.},
 title = {Order and uo-convergence in spaces of continuous functions},
 fjournal = {Topology and its Applications},
 journal = {Topology Appl.},
 issn = {0166-8641},
 volume = {308},
 pages = {9},
 note = {Id/No 107999},
 year = {2022},
 language = {English},
 doi = {10.1016/j.topol.2022.107999},
 zbMATH = {7471575},
 Zbl = {1492.46016}
}

@misc{p-convex-renorming-factorization,
 author = {Enrique Garc{\'{\i}}a-S{\'a}nchez and Denny H. Leung and Mitchell A. Taylor and Pedro Tradacete},
 title = {Banach lattices with upper {$p$}-estimates: {Renorming} and factorization},
 year = {2026},
 howpublished = {Preprint, {arXiv}:2601.11056 [math.{FA}] (2026)},
 url = {https://arxiv.org/abs/2601.11056},
 arXiv = {arXiv:2601.11056}
}

\end{document}